\def\@evenhead{{\fontsize{6.5pt}{6.5pt}\selectfont \hfil \leftmark\hfil\thepage}}
\def\@oddhead{{\fontsize{6.5pt}{6.5pt}\selectfont\hfil\rightmark \hfil\thepage}}
\tikzset{->-/.style={decoration={  markings,  mark=at position #1 with
    {\arrow{>}}},postaction={decorate}}}
\tikzset{-<-/.style={decoration={  markings,  mark=at position #1 with
    {\arrow{<}}},postaction={decorate}}}
\newcommand{\mfw}{\mathbf{w}}
\newcommand{\mfx}{\mathbf{x}}
\newcommand{\mcA}{\mathcal{A}}
\newcommand{\mcC}{\mathcal{C}}
\newcommand{\mcE}{\mathcal{E}}
\newcommand{\mcF}{\mathcal{F}}
\newcommand{\mcK}{\mathcal{K}}
\newcommand{\mcM}{\mathcal{M}}
\newcommand{\mcO}{\mathcal{O}}
\newcommand{\mcT}{\mathcal{T}}
\newcommand{\mcZ}{\mathcal{Z}}
\newcommand{\mbA}{\mathbb{A}}
\newcommand{\mbL}{\mathbb{L}}
\newcommand{\mbN}{\mathbb{N}}
\newcommand{\mbP}{\mathbb{P}}
\newcommand{\mbQ}{\mathbb{Q}}
\newcommand{\mbR}{\mathbb{R}}
\newcommand{\mbT}{\mathbb{T}}
\newcommand{\mbZ}{\mathbb{Z}}
\theoremstyle{plain}
\newtheorem{theorem}{Theorem}[section]
\newtheorem{lemma}[theorem]{Lemma}
\newtheorem{corollary}[theorem]{Corollary}
\newtheorem{proposition}[theorem]{Proposition}
\newtheorem{conjecture}[theorem]{Conjecture}
\theoremstyle{definition}
\newtheorem{definition}[theorem]{Definition}
\newtheorem{example}[theorem]{Example}
\newtheorem{remark}[theorem]{Remark}
\newtheorem{question}[theorem]{Question}
\numberwithin{equation}{section}
\newtheorem{definition-proposition}[theorem]{Definition-Proposition}
\newtheorem{observation}[theorem]{Observation}
\begin{document}

\title {Tropicalization and cluster asymptotic phenomenon of generalized Markov equations}

\date{\today}

\author{Zhichao Chen}
\address{School of Mathematical Sciences\\ University of Science and Technology of China \\ Hefei, Anhui 230026, P. R. China.}
\email{czc98@mail.ustc.edu.cn}
\author{Zelin Jia}
\address{Graduate School of Mathematics\\ nagoya University\\Chikusa-ku\\ Nagoya\\464-8601\\ Japan.}
\email{zelin.jia.c0@math.nagoya-u.ac.jp} 
\maketitle
\begin{abstract} The generalized Markov equations are deeply connected with the generalized cluster algebras of Markov type.
	  We construct a deformed Fock-Goncharov tropicalization for the generalized Markov equations and prove that their tropicalized tree structure is essentially the same as that of the classical Euclid tree. We then define the generalized Euclid tree and prove that it converges to the classical Euclid tree up to a scalar multiple. Moreover, by means of cluster mutations, we exhibit an asymptotic phenomenon, up to some limit $q$, between the logarithmic generalized Markov tree and the classical Euclid tree. A rationality conjecture of $q$ is then put forward.  We also propose a generalized uniqueness conjecture for the generalized Markov equations, which illustrates an application of the asymptotic phenomenon. \\\\
	Keywords: generalized Markov equations, tropicalization, cluster mutations, asymptotic phenomenon, generalized uniqueness conjecture. \\
	2020 Mathematics Subject Classification: 13F60, 11D09, 11B39. 
\end{abstract}
\tableofcontents

\section{Introduction}
\subsection{Backgrounds}In 1880, Markov \cite{Mar80} introduced the classical Markov equation 
\begin{align}
	X_1^2+X_2^2+X_3^2=3X_1X_2X_3
\end{align} in connection with problems of Diophantine approximation and the theory of binary quadratic forms. Its positive integer solutions, which are called \emph{Markov triples}, encode the best Diophantine approximations of real numbers by rational numbers in a certain precise sense. The associated \emph{Markov numbers} reveal a deep connection between arithmetic optimization and geometric structures. He defined three transformations as 
\begin{align}
	\begin{array}{cc}
		m_1(X_1,X_2,X_3)=(3X_2X_3-X_1,X_2,X_3),\\
		m_2(X_1,X_2,X_3)=(X_1,3X_1X_3-X_2,X_3),\\
		m_3(X_1,X_2,X_3)=(X_1,X_2,3X_1X_2-X_3).
	\end{array} \label{eq: Markov rules}
\end{align} and proved that all the Markov triples lie in the orbit of the initial solution $(1,1,1)$ under the group $\langle m_1,m_2,m_3 \rangle$. These transformations are also called \emph{mutations} in the sense of cluster algebras.

In 2002, cluster algebras were introduced by Fomin and Zelevinsky \cite{FZ02, FZ03} to investigate the total positivity of Lie groups and canonical bases of quantum groups. In recent years, cluster algebras have developed close connections with various branches of mathematics, such as  representation theory \cite{BMRRT06, DWZ08, BIRS09, HL10, DWZ10, KY11}, higher Teichm{\"u}ller theory \cite{FG09}, integrable system \cite{KNS11}, Poisson geometry \cite{GSV03, GSV12},
	 commutative algebras \cite{Mul13}, combinatorics \cite{FR05, NZ12, LLZ14,  GHKK18} and number theory \cite{Pro20, PZ12, Lam16, Hua22, LLRS23, GM23, BL24, Kau24, CL24, CL25, BL25, Mus25}.
	 
The relations between cluster algebras and Markov equations were firstly discovered by Propp \cite{Pro20}. Later, Peng-Zhang \cite{PZ12}, Huang \cite{Hua22} and Lee-Li-Rabideau-Schiffler \cite{LLRS23} studied some properties and conjectures about the Markov equation, whose Markov triples are in one-to-one correspondence with clusters of the once-punctured torus cluster algebra. Afterwards, Chekhov-Shapiro \cite{CS14} and Nakanishi \cite{Nak15} introduced the generalized cluster algebras. Then, Gyoda and Matsushita \cite{GM23} defined the generalized Markov equations 
\begin{align}
	X_1^2 + X_2^2 + X_3^2 + \lambda_3 X_1 X_2 + \lambda_1 X_2 X_3 + \lambda_2 X_3 X_1 = (3+ \lambda_1 + \lambda_2 + \lambda_3) X_1 X_2 X_3, 
\end{align} where $\lambda_1, \lambda_2, \lambda_3 \in \mbZ_{\geq 0}$. They found the structure of generalized cluster algebras behind and proved that all the \emph{generalized Markov triples} can be generated by the initial solution $(1,1,1)$ through finitely many (generalized) cluster mutations. The tree formed by the generalized Markov triples is referred to be the \emph{generalized Markov tree}.
\subsection{Purposes} The classical Euclid tree $\mcE$ (\Cref{def: classical Euclid}) is a recursive structure with simple additive operation that extends the classical Euclidean algorithm to positive integer triples. It provides a natural framework to study asymptotic relations among integer triples and serves as a combinatorial model of continued fraction-like processes in higher dimensions. According to the generating rules \eqref{eq: Markov rules} and \eqref{eq: classical}, studying their asymptotic correspondence reveals how additive recursive structures may approximate the Markov dynamics, providing a bridge between Euclidean-type algorithms and Diophantine geometry. \emph{The purpose of this paper is to study the tropicalization and asymptotic phenomenon between the generalized Markov tree and the classical Euclid tree via cluster mutations}. For this purpose, we define the $k$-generalized Euclid tree $\mcK$ (\Cref{def: GET}), which is analogue to the classical one. We also extend the well-known \emph{Markov uniqueness conjecture} (\Cref{conj: Markov uniqueness}) to the \emph{generalized Markov uniqueness conjecture} (\Cref{conj: CJ}). We hope that the asymptotic phenomenon may have a good application to the generalized Markov uniqueness conjecture.
\subsection{Main results}	 
 Fock and Goncharov \cite{FG09} introduced the cluster ensembles and the tropicalization, which is called \emph{Fock-Goncharov tropicalization} (see \Cref{prop: trop}). We deform this notion and show the uniformity between the generalized Markov tree and the classical Euclid tree under such tropicalization.
\begin{theorem}[{\Cref{thm: FG trop}}]
	The deformed Fock-Goncharov tropicalization of the generalized Markov tree is the classical Euclid tree.
\end{theorem}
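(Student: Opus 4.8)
The plan is to compute directly how the generalized Markov mutations $m_i$ transform under the deformed Fock–Goncharov tropicalization, and to recognize the result as the additive rules generating the classical Euclid tree $\mcE$. First I would set up the tropicalization map explicitly: for a generalized Markov triple $(X_1,X_2,X_3)$ we pass to logarithmic (or valuation) coordinates $x_i = \operatorname{val}(X_i)$ (or $x_i = \log X_i$ along a degeneration), and the deformation should absorb the $\lambda_j$-dependent coefficients so that the relation
\begin{align}
	X_1^2 + X_2^2 + X_3^2 + \lambda_3 X_1 X_2 + \lambda_1 X_2 X_3 + \lambda_2 X_3 X_1 = (3 + \lambda_1 + \lambda_2 + \lambda_3) X_1 X_2 X_3
\end{align}
becomes, after tropicalizing $+ \rightsquigarrow \max$ and $\times \rightsquigarrow +$, a min/max-plus identity on the $x_i$. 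The key point is that tropicalization collapses the coefficients: the constants $3 + \lambda_1 + \lambda_2 + \lambda_3$, $\lambda_j$, and $1$ all tropicalize to $0$ (they are ``constant'' with respect to the degeneration parameter), so the deformed Fock–Goncharov tropicalization of the generalized equation is identical to that of the classical Markov equation. This is the conceptual heart of the statement and should be isolated as the first step.

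Next I would tropicalize the mutation rule $m_3(X_1,X_2,X_3) = (X_1, X_2, (3 + \lambda_1+\lambda_2+\lambda_3)X_1X_2 - X_3 - \text{(linear terms)})$ — more precisely the generalized exchange relation for the third coordinate. Under the tropicalization this becomes $x_3 \mapsto \max(x_1 + x_2, \,\dots) - x_3$, and using the tropicalized equation from Step 1 together with the inequality structure of a Markov-type triple (one should verify, by induction along the tree, that the relevant max is always achieved by the term $x_1 + x_2$, exactly as in the classical case analyzed in the cited work), this simplifies to $x_3 \mapsto x_1 + x_2 - x_3$. That is precisely the additive Euclid-type rule \eqref{eq: classical}; symmetrically for $m_1, m_2$. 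Since both trees are freely generated from the common initial seed — the image of $(1,1,1)$ under tropicalization is the root of $\mcE$ — a straightforward induction on the word in $\langle m_1,m_2,m_3\rangle$ identifies the tropicalized generalized Markov tree with $\mcE$ node by node.

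The main obstacle I anticipate is \emph{not} the combinatorics of the recursion but pinning down the correct deformation of the Fock–Goncharov tropicalization so that it is simultaneously (a) well-defined on the generalized cluster structure of Markov type (the generalized mutations of Chekhov–Shapiro/Nakanishi involve the polynomial $\prod$-factors, not just binomials, so the naive tropicalization must be checked to be compatible with these), and (b) genuinely reduces the coefficient data to the trivial tropical element. Concretely, one must confirm that the $\lambda_i$ enter only through coefficients that have trivial valuation, and that the ``deformation'' is exactly the normalization making this hold; I would expect this to require a short lemma on the structure of the generalized exchange polynomials for Markov type, after which everything else follows by the inductive max-plus computation sketched above. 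A secondary but routine point is checking the base case and the sign/branch consistency of $\max$ at each mutation, which mirrors the classical argument and can be cited accordingly.
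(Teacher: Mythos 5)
Your overall strategy---tropicalize the equation so that all coefficients collapse, tropicalize the (subtraction-free, Laurent-positive) mutation rules, and then use the position of the maximum along the tree to resolve the resulting $\max$, inducting on the mutation word---is the same as the paper's. Your Step 1 is fine and matches the paper's computation that the deformed tropicalization of \eqref{eq: GME} is $\max(2x_1,2x_2,2x_3)=x_1+x_2+x_3$, independent of the $\lambda_i$; your worry about compatibility with the generalized exchange polynomials dissolves because the mutation maps \eqref{mutation rule} are already positive Laurent polynomials, so \Cref{prop: trop} applies directly and no extra lemma on the exchange polynomials is needed.

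However, the central computation in your Step 2 is wrong as written. Tropicalizing $X_3'=(X_1^2+\lambda_3X_1X_2+X_2^2)/X_3$ gives $x_3\mapsto \max(2x_1,\,x_1+x_2,\,2x_2)-x_3=\max(2x_1,2x_2)-x_3$; the cross term $x_1+x_2$ never strictly attains this maximum (it is at most $2\max(x_1,x_2)$), so your claim that ``the relevant max is always achieved by $x_1+x_2$'' fails, and the resulting rule $x_3\mapsto x_1+x_2-x_3$ is \emph{not} the Euclid rule \eqref{eq: classical}, which sends $x_3$ to $x_1+x_2$. Concretely, at the deformed-tropical point $(4,6,2)$ of \Cref{ex: deform} the correct mutated value is $\max(8,12)-2=10=4+6$, whereas your rule gives $8$ and does not even preserve the tropicalized relation \eqref{eq: trop markov}. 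The correct resolution, which is the paper's, is: when mutating away from the root, \Cref{lem: monotonicity} (quoted from Gyoda--Matsushita) guarantees that the variable being mutated is not the maximum of the current triple; by the defining property of the deformed tropicalization this transfers to $x_3\neq\max(x_1,x_2,x_3)$, hence $\max(2x_1,2x_2)=\max(2x_1,2x_2,2x_3)=x_1+x_2+x_3$ by the tropicalized equation, and the $-x_3$ cancels to give exactly $x_1+x_2$. With that replacement (and no fresh induction on which term dominates, since \Cref{lem: monotonicity} already supplies the max position) your argument becomes the paper's proof.
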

To record the information of mutations, we define $\mfw=[w_1,
  \dots,w_n,\dots]$ to be a reduced mutation sequence, see \Cref{def: reduced mutation}. For the $k$-generalized Euclid tree $\mcK$ and the classical Euclid tree $\mcE$, we introduce the \emph{comparison triple} (\Cref{def: Comparison triple}). Then, the next result is the asymptotic phenomenon between $\mcK$ and $\mcE$.
\begin{theorem}[{\Cref{thm:comparison-convergence}}]
   Take any $k$-generalized Euclid tree $\mcK$ with the initial triple $(A,B,C)$ and the classical Euclid tree $\mcE$ with the initial triple $(a,b,c)$. Let $\mfw=[w_1,
  \dots,w_n,\dots]$ be an infinite reduced mutation sequence. Then, the statements as follows hold:
  \begin{enumerate}
  	\item If $1,2,3$ all appear infinitely many times in $\mfw$, then there exists a real number $q$, such that each component of the triple in $\mcK$ converges to $q$ times of the corresponding component of the triple in $\mcE$ when $n$ goes infinity.
  	\item If one index $i$ of $\{1,2,3\}$ appears only finitely many times in $\mfw$, then there exists a real number $q$, such that two components of the triple indexed by $\{1,2,3\}\backslash \{i\}$ in $\mcK$ converge to $q$ times of the corresponding components of the triple in $\mcE$ when $n$ goes infinity.
  \end{enumerate}
  \end{theorem}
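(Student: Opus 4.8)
The plan is to track the comparison triple $\mfr_n$ of \Cref{def: Comparison triple} attached to the triples $\mcE_n=(a_n,b_n,c_n)$ and $\mcK_n=(A_n,B_n,C_n)$ obtained from the two initial triples by the first $n$ mutations of $\mfw$, and to show that its coordinates perform a consensus (convex–averaging) dynamics which collapses to a point in case (1) and to a point in two of the three coordinates in case (2). First I would record the structural facts from \eqref{eq: classical} and \Cref{def: GET}: a mutation $m_i$ rewrites only the $i$-th entry, as the sum of the other two plus a constant depending only on the tree, so after one affine normalization of all entries both recursions become the pure additive rule $x_i\mapsto x_j+x_k$; for this rule the comparison triple evolves under $m_i$ by leaving the two components indexed by $\{1,2,3\}\setminus\{i\}$ fixed and replacing the $i$-th by the convex combination of them whose weights are the (normalized) corresponding $\mcE$-entries. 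An equivalent bookkeeping is that $\mcE_n$ and $\mcK_n$ are $P_n$ applied to the two (normalized) initial vectors, where $P_n$ is a product of three fixed nonnegative $3\times 3$ matrices, and the assertion becomes that $P_n/\|P_n\|$ converges to a rank-one matrix whose data then produce $q$.

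Next come the inputs that make the averaging contracting. Along a reduced sequence the just–mutated entry strictly exceeds the other two (the entries being positive throughout), so the largest entry strictly increases; in fact $\max$ grows at least linearly, hence every index mutated infinitely often has its entries tending to $+\infty$ while an index mutated only finitely often has eventually constant entries. Since each component of $\mfr_n$, when mutated, becomes a convex average of the other two, the spread $S_n:=\max_t\mfr_n^{(t)}-\min_t\mfr_n^{(t)}$ is non-increasing and decreases precisely when the coordinate realizing the strict maximum or minimum is mutated. To upgrade this to $S_n\to 0$ one needs the averaging weights to be bounded away from $0$ and $1$ along a positive density of such moments; the point is that since a mutation overwrites an entry by the sum of the other two, the three entries cannot stay lopsided for more than a bounded number of steps, so (in case (1)) just after each mutation of the currently smallest entry the entries are comparable up to an absolute constant and remain so for a few steps, yielding a uniform contraction factor. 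Establishing this balance statement and coupling it correctly with the $\mfr$-dynamics is the step I expect to be the main obstacle: a priori the extremal $\mfr$-coordinate could be carried by an entry that has become negligible, making its mutation nearly trivial, and ruling this out is exactly where reducedness (\Cref{def: reduced mutation}) and the recurrence of the indices are used; in the matrix picture this is the same difficulty as proving the rank-one convergence of $P_n/\|P_n\|$.

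With the contraction in hand the two cases follow. In case (1) all of $1,2,3$ recur, so the coordinate realizing $\max_t\mfr_n^{(t)}$ is mutated infinitely often, and combining this with the balance input gives a uniform contraction of $S_n$ along infinitely many disjoint windows; hence $S_n\to 0$, the monotone quantities $\max_t\mfr_n^{(t)}$ and $\min_t\mfr_n^{(t)}$ have a common limit $q$, and $A_n/a_n,\ B_n/b_n,\ C_n/c_n\to q$. In case (2), after the last occurrence of the finitely–appearing index $i$ the $i$-th entries of $\mcE_n$ and $\mcK_n$ are frozen; the other two indices recur and, by reducedness, alternate from some point on, and mutating one of them replaces its $\mfr$-component by a convex combination of the other active component and the $i$-th one in which the weight on the $i$-th tends to $0$ (the active entry tends to $\infty$ while the $i$-th stays bounded), which forces $|\mfr_n^{(j)}-\mfr_n^{(k)}|\to 0$ and a common limit $q$ for the two components indexed by $\{1,2,3\}\setminus\{i\}$. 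Finally one returns to the un-normalized triples: in the active coordinates the affine normalization is asymptotically negligible, so the raw ratios have the same limit $q$, whereas in the frozen coordinate of case (2) the normalization offsets the (eventually constant) ratio by a fixed amount, which is precisely why that coordinate is exempt from the common limit in the statement.
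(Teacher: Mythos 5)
Your setup is sound and in fact contains a genuine improvement on the paper: shifting every entry of $\mcK$ by $k$ turns the generalized rule into the pure additive one, so the (normalized) comparison triple evolves by exact convex averaging; this makes boundedness and the monotonicity of the minimum (the paper's \Cref{prop:comparison-bounded} and \Cref{cor: min converge}, proved there via Fibonacci reciprocal sums) immediate, and your treatment of case (2) matches \Cref{prop:comparison-convergence-two} in spirit, though "a common limit $q$" still needs an argument (the weight on the frozen coordinate decays only like $1/n$, so one must either compute explicitly, as the paper does, or note that $\prod(1-\epsilon_n)\to 0$ forces the two active coordinates all the way to the frozen value).

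The genuine gap is exactly the step you yourself flag as "the main obstacle" in case (1): you never prove that the spread of the comparison triple tends to $0$, and the mechanism you sketch for it does not work as stated. First, the claim that "the three entries cannot stay lopsided for more than a bounded number of steps" is false: along an arbitrarily long stretch alternating two indices (which case (1) permits), the third classical entry is frozen while the other two grow, so the triple stays lopsided throughout the stretch and the averaging weights involving the frozen entry tend to $0$. Second, a single "balanced" mutation does not yield a uniform contraction of the spread: after mutating index $i$ the new spread equals the gap between the two \emph{untouched} comparison coordinates, independently of the weights used at that step, so balance of the weights at isolated moments proves nothing by itself; one must run a multi-step window argument that keeps track of which coordinate is extremal and which pair is being averaged, and show that the recurrence of all three indices eventually breaks the two-index alternation with the already-contracted pair at the endpoints. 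This is precisely what the paper spends its effort on: the quantitative internal-division criterion (\Cref{lem: internal condition}), the persistence lemmas \Cref{lem: total length} and \Cref{lem: total length bound}, and the alternating-mutation case analysis (Cases 1, 1.1, 1.2, 2) in the proof of \Cref{prop:comparison-convergence}, where the hypothesis that $\delta_1,\delta_2,\delta_3$ all recur is invoked to terminate the $\delta_1$--$\delta_3$ (resp. $\delta_1$--$\delta_2$) alternation and force the total length below $2\epsilon$ permanently. Without an argument of this kind (or a rank-one/weak-ergodicity theorem for the specific matrix products $P_n$, which you invoke but do not prove and which is equivalent to the same difficulty), case (1) remains unproved.
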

For each generalized Markov triple along $\mfw$, except the initial solution $(1,1,1)$, there exists a unique maximal component. Hence, we associate each triple with a real number, which is the ratio between the maximal one and the product of the other two numbers, see \Cref{sub: Ratio number sequence}. Then, we can get a ratio number sequence $\{k_j\}_{j=1}^{+\infty}$ associated with $\mfw$. It is a strictly increasing sequence (\Cref{lem: ratio increase}) and converges to some real number as follows.

\begin{theorem}[{\Cref{thm: converge}}]
	Let $\mfw=[w_1,\dots,w_n,\dots]$ be an infinite reduced mutation sequence and $\{k_j\}_{j=1}^{+\infty}$ be the ratio number sequence associated with $\mfw$. Then, the following statements hold:
	\begin{enumerate}
		\item If $1,2,3$ all appear infinitely many times in $\mfw$, then $\displaystyle \lim_{j\to +\infty} k_j = 3+\lambda_1+\lambda_2+\lambda_3$.
		\item If one index $i$ of $\{1,2,3\}$ appears only finitely many times in $\mfw$, then there exists a real number $k_{\beta}$, such that $\displaystyle \lim_{j\to +\infty} k_j = k_{\beta}$.
	\end{enumerate}
\end{theorem}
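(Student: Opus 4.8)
The plan is to combine the strict monotonicity of $\{k_j\}$ from \Cref{lem: ratio increase} with a crude uniform upper bound, and then pin down the limit in each case using the explicit Vieta-jumping formulas for the mutations. Viewing the generalized Markov equation as a quadratic in $X_1$ (resp.\ $X_2$, $X_3$) gives the mutation rule: if the triple before the $(j+1)$-th step is $(X_1,X_2,X_3)$ and $w_{j+1}=1$, then the new first coordinate is $X_1'=\mu X_2X_3-\lambda_3X_2-\lambda_2X_3-X_1$ with $\mu:=3+\lambda_1+\lambda_2+\lambda_3$, and cyclically for $w_{j+1}=2,3$. Using $\lambda_i\ge 0$ and $X_i\ge 1$, I would record the structural facts that along a reduced mutation sequence (i) each coordinate is non-decreasing and strictly increases at every step in its own direction, and (ii) for each non-initial triple the unique maximal coordinate is the one just mutated, while the freshly created coordinate is strictly smaller than $\mu$ times the product of the other two. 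Fact (ii) gives $k_j<\mu$ for all $j$, so $\{k_j\}$ is increasing and bounded, hence convergent; this already settles the existence of $k_\beta$ in part (2).

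For part (1), since $1,2,3$ all occur infinitely often, each coordinate of the triple at step $j$ tends to $+\infty$. When $w_{j+1}=1$ we have
\[
k_{j+1}=\frac{X_1'}{X_2X_3}=\mu-\frac{\lambda_3}{X_3}-\frac{\lambda_2}{X_2}-\frac{X_1}{X_2X_3},
\]
and similarly for $w_{j+1}=2,3$. The first two error terms go to $0$; for the third, (ii) gives that $X_1$ is at most the maximal coordinate at step $j$, which is $X_2$ or $X_3$, so $X_1/(X_2X_3)\le 1/\min(X_2,X_3)\to 0$. Hence $k_{j+1}\to\mu=3+\lambda_1+\lambda_2+\lambda_3$.

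To identify $k_\beta$ in part (2), assume without loss of generality that $3$ occurs only finitely often; then $X_3$ is eventually a constant positive integer $c_3$ and the tail of $\mfw$ alternates between $1$ and $2$. Set $\alpha:=\mu c_3-\lambda_3\ (\ge 3)$. Listing the mutated coordinates in the order they are created produces a sequence $(u_n)$ satisfying $u_{n+1}=\alpha u_n-c_n-u_{n-1}$, where $c_n$ alternates between $\lambda_1c_3$ and $\lambda_2c_3$; by (i)--(ii) this sequence is eventually strictly increasing and unbounded, and $k_j=u_{n+1}/(c_3u_n)$ for the corresponding $n$. Dividing the recursion by $u_n$ and letting $n\to\infty$, and writing $L:=\lim_j k_j$, one obtains $(c_3L)^2-\alpha(c_3L)+1=0$; since $c_3L=\lim u_{n+1}/u_n\ge 1$ and $1$ is not a root, $c_3L$ is the larger root, giving $k_\beta=L=\big(\alpha+\sqrt{\alpha^2-4}\big)/(2c_3)$.

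The main obstacle is the structural bookkeeping in (i) and (ii): verifying that a reduced mutation always picks the larger root of the relevant quadratic and that the freshly mutated coordinate is the strict maximum of its triple. These are elementary inequalities in $\lambda_i\ge 0$ and the positivity of the coordinates, but they have to be organized carefully, presumably by induction along $\mfw$ from the initial triple $(1,1,1)$. Once they are available, part (1) is a two-line estimate, part (2) follows from monotonicity together with the bound $k_j<\mu$, and the explicit value of $k_\beta$ drops out of the short recurrence above.
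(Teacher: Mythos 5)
Your argument is correct, and for the parts the theorem actually asserts it coincides with the paper's proof: you use strict monotonicity (\Cref{lem: ratio increase}) plus the bound $k_j<3+\lambda_1+\lambda_2+\lambda_3$ obtained by rewriting $k_j$ as $\mu$ minus positive error terms (your Vieta-jump identity $k_{j+1}=\mu-\lambda_3/X_3-\lambda_2/X_2-X_1/(X_2X_3)$ is exactly the paper's rearrangement of the equation), and then, when all three indices occur infinitely often, you kill the error terms using that each coordinate tends to infinity and that the unmutated coordinate is dominated by the current maximum — which is precisely \Cref{lem: monotonicity}, already available in the paper, so the "structural bookkeeping" you flag is not a real obstacle. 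Where you genuinely diverge is in case (2): the paper only proves existence of $k_\beta$ (monotone and bounded) and explicitly leaves its determination as an open question after \Cref{ex: Fibonacci and Markov}, whereas you go on to identify it. Your extra derivation looks sound: once index $3$ (say) stops occurring, the third coordinate freezes at $c_3$, the tail alternates between directions $1$ and $2$, and both Vieta jumps have the same leading coefficient $\alpha=(3+\lambda_1+\lambda_2+\lambda_3)c_3-\lambda_3\geq 3$, giving the three-term recurrence $u_{n+1}=\alpha u_n-c_n-u_{n-1}$ with bounded $c_n$; since the limit of $k_j=u_{n+1}/(c_3u_n)$ already exists and $u_n\to\infty$, passing to the limit forces $(c_3k_\beta)^2-\alpha(c_3k_\beta)+1=0$ and the larger root is selected, so $k_\beta=\bigl(\alpha+\sqrt{\alpha^2-4}\bigr)/(2c_3)$. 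This is consistent with the paper's computation $k_\beta=(3+\sqrt{5})/2$ in \Cref{ex: Fibonacci and Markov} ($c_3=1$, $\lambda_i=0$, $\alpha=3$), so your proposal not only proves the stated theorem by essentially the paper's route but also answers, for any such tail, the question the paper leaves open about the value of $k_\beta$.
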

By taking the logarithm of generalized Markov triples, based on the above results, the mutations behave like those of the $k$-generalized Euclid triples. With the help of $k$-generalized Euclid tree $\mcK$ and the ratio number sequence $\{k_j\}$, we can show the asymptotic phenomenon between the logarithmic generalized Markov tree and the classical Euclid tree as follows. 
\begin{theorem}[{\Cref{thm: generalized Markov tree}}]
  Let $\mfw=[w_1,\dots,w_n,\dots]$ be an infinite reduced mutation sequence and $\{k_j\}_{j=1}^{+\infty}$ be the ratio number sequence associated with $\mfw$.
  \begin{enumerate}
  	\item If $1,2,3$ all appear infinitely many times in $\mfw$, then there exists a real number $q$, such that the logarithmic generalized Markov chain along $\mfw$ converges to $q$ times of the corresponding classical Euclid chain when $n$ goes infinity.
  	\item If one index $i$ of $\{1,2,3\}$ appear only finitely many times in $\mfw$, then there exists a real number $q$, such that the components of the logarithmic generalized Markov chain along $\mfw$ indexed by $\{1,2,3\}\backslash \{i\}$ converge to $q$ times of the corresponding components in the classical Euclid chain when $n$ goes infinity.
  \end{enumerate}
\end{theorem}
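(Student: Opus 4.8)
The plan is to work in logarithmic coordinates, where the generalized Markov exchange relations turn into the additive mutations of the generalized Euclid trees, and then to combine \Cref{thm: converge} with \Cref{thm:comparison-convergence}. Write the generalized Markov triples produced along $\mfw$ as $(X_1^{(n)},X_2^{(n)},X_3^{(n)})$ and set $L_i^{(n)}=\log X_i^{(n)}$; this sequence of triples is the logarithmic generalized Markov chain along $\mfw$. If the $n$-th letter of $\mfw$ is $i$ and $\{i,j,l\}=\{1,2,3\}$, then by the generalized Markov exchange relation and the definition of the ratio number (\Cref{sub: Ratio number sequence}) we have $X_i^{(n)}=k_n\,X_j^{(n)}X_l^{(n)}$ with $X_j^{(n)}=X_j^{(n-1)}$ and $X_l^{(n)}=X_l^{(n-1)}$, hence
\[
 L_i^{(n)}=\log k_n+L_j^{(n-1)}+L_l^{(n-1)},\qquad L_j^{(n)}=L_j^{(n-1)},\qquad L_l^{(n)}=L_l^{(n-1)} .
\]
Thus each mutation step of the logarithmic generalized Markov chain is precisely the $k_n$-generalized Euclid mutation of \Cref{def: GET}: the chain is a generalized Euclid chain whose parameter is allowed to change at each step, the $n$-th parameter being the $n$-th ratio number $k_n$.

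For part (1), suppose $1,2,3$ all occur infinitely often in $\mfw$. By \Cref{thm: converge}(1), $k_n\to K:=3+\lambda_1+\lambda_2+\lambda_3$; moreover the exchange relation displays $k_n$ as $K$ minus a sum of terms each bounded by a reciprocal of an entry of the current triple, so $|k_n-K|$ tends to $0$ at least as fast as the reciprocal of the smallest entry of the $n$-th triple, which decays super-exponentially along an infinite reduced sequence. First I would fix the constant-parameter $K$-generalized Euclid tree $\mcK$ and apply \Cref{thm:comparison-convergence}(1) to $\mcK$ and the classical Euclid tree $\mcE$, with an appropriate comparison triple (\Cref{def: Comparison triple}), to obtain a scalar $q$. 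Then I would compare the logarithmic generalized Markov chain with the $K$-generalized Euclid chain along $\mfw$: the discrepancy obeys an inhomogeneous Euclid-type recursion whose source at step $n$ is $\log k_n-\log K$. The claim is that this leaves the dominant exponential behaviour along $\mfw$ untouched: the boundedly many early steps where $k_n$ is far from $K$ only alter the leading coefficient of the chain (and so are absorbed into $q$), while the tail, where $|\log k_n-\log K|$ decays super-exponentially, feeds only into the subdominant, contracting part of the recursion. Matching leading behaviour then yields that the logarithmic generalized Markov chain converges to $q$ times the classical Euclid chain.

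Part (2) runs along the same lines. If the index $i$ occurs only finitely often in $\mfw$, then past its last occurrence $X_i^{(n)}$ is frozen and both the logarithmic generalized Markov chain and the classical Euclid chain reduce to two-variable additive recursions on the indices $\{1,2,3\}\setminus\{i\}$. Now \Cref{thm: converge}(2) gives $k_n\to k_\beta$ (with the frozen entry built into the limit), \Cref{thm:comparison-convergence}(2) supplies the scalar $q$ for the two surviving coordinates, and the same discrepancy analysis as in (1), with $K$ replaced by $k_\beta$, shows that those two coordinates of the logarithmic generalized Markov chain converge to $q$ times the corresponding coordinates of $\mcE$; the $i$-th coordinate is eventually constant and is excluded, as in the statement.

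The step I expect to be the main obstacle is the discrepancy analysis in part (1) (and its analogue in (2)): a Euclid-type recursion amplifies errors in Fibonacci fashion, and the first ratio numbers $k_1,k_2,\dots$ are genuinely bounded away from $K$, so a crude bound on $\sum_n|\log k_n-\log K|$ is useless. One has to isolate the finitely many large early perturbations — which get amplified to the same exponential order as the chains themselves and therefore must be carried along and absorbed into the scalar $q$ rather than estimated away — from the tail perturbations, whose fast decay, forced by the rapid growth of generalized Markov numbers along $\mfw$, must be weighed against the exponential growth of the recursion's transition so that their amplified contribution settles into the subdominant, decaying part; and this separation has to be interlocked with the convergence mechanism already set up in the proof of \Cref{thm:comparison-convergence}.
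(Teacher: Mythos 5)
Your reduction of the logarithmic Markov chain to a generalized Euclid chain with variable parameter $k_n$ is exactly the paper's starting point, and your treatment of part (2) (frozen coordinate, two-variable linear recursions, Ces\`aro-type limit) is essentially the paper's computation. The genuine gap is in part (1): the decisive step — the discrepancy analysis between the logarithmic Markov chain and the constant-parameter $k_\lambda$-Euclid chain — is only a plan, and the plan rests on two claims that fail. First, $|k_n-k_\lambda|$ need not decay super-exponentially, nor at any prescribed rate: by \eqref{eq: k_j}, $k_\lambda-k_n$ is controlled by reciprocals of the two \emph{unmutated} entries, and along a reduced sequence in which one index recurs only after longer and longer gaps the corresponding entry stays frozen for arbitrarily long stretches, so $k_\lambda-k_n$ remains of order $1/c$ there (e.g.\ during a long $[1,2,1,2,\dots]$ block of the classical Markov tree with third entry $1$ one has $k_n\to\varphi^2\neq 3$); the hypothesis that $1,2,3$ all appear infinitely often gives $k_n\to k_\lambda$ (\Cref{thm: converge}) but with no rate whatsoever. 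Second, a Euclid-type recursion has no ``contracting subdominant part'': every perturbation is amplified at the same Fibonacci-type rate as the chain itself, so the dominant/subdominant dichotomy you invoke does not exist, and what actually controls the error is summability of the perturbations \emph{relative to the classical Euclid entries} (the Fibonacci-reciprocal mechanism of \Cref{prop:comparison-bounded}) together with the interval-shrinking argument of \Cref{prop:comparison-convergence}. Moreover, ``absorbing the early perturbations into $q$'' presupposes that the perturbed chain divided by the classical chain converges — which is precisely the statement being proved — so at its key point the plan is circular unless you redo a variable-parameter version of the comparison machinery anyway.

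The paper avoids all of this by observing that the proofs of \Cref{prop:comparison-bounded} and \Cref{prop:comparison-convergence} use the deformation parameter $k$ only through upper bounds (the sums $2k/F_n$ and the thresholds $x_j,y_j,z_j>k/\epsilon$). Since $\{\log k_j\}$ is bounded above by $\log k_\lambda$ (\Cref{lem: ratio increase}, \Cref{thm: converge}), one simply replaces the fixed $k$ by this bounded sequence and re-runs both arguments directly for the comparison of the logarithmic Markov chain with the classical Euclid chain; no intermediate constant-parameter chain and no rate of convergence of $k_n$ is needed. If you wish to keep your two-stage route, you must prove the analogous variable-parameter convergence statement for the discrepancy chain itself, which is the same work done less directly.
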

  In general, it is quite difficult to determine the explicit value of the limit $q$. In fact, it is not even clear whether it is a rational number. However, motivated by \Cref{ex: Fibonacci case}, we propose the following rationality conjecture.
\begin{conjecture}[\Cref{conj: non rational}]
	We conjecture that all such limits $q\in \mbR_+\backslash\mbQ_+$. 
\end{conjecture}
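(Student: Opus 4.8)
Since this is a conjecture, the plan is to describe a realistic line of attack rather than a finished argument. The first step is to turn \Cref{thm: generalized Markov tree} and \Cref{thm: converge} into a semi-explicit formula for $q$. Fix an infinite reduced mutation sequence $\mfw=[w_1,\dots,w_n,\dots]$ and, for concreteness, the initial triple $(1,1,1)$. Writing $x_i^{(n)}=\log X_i^{(n)}$ for the logarithmic generalized Markov chain and unravelling the quadratic mutation relations $X_iX_i'=X_j^2+X_k^2+\lambda_iX_jX_k$ and $X_i+X_i'=(3+\lambda_1+\lambda_2+\lambda_3)X_jX_k-(\text{linear in }X_j,X_k)$, one obtains at step $n$
\[
  x_{w_n}^{(n)} \;=\; x_{j}^{(n-1)} + x_{k}^{(n-1)} + \log\!\big(3+\lambda_1+\lambda_2+\lambda_3\big) + \varepsilon_n ,
\]
where, by \Cref{thm: converge}(1) (the ratio numbers $k_j\to 3+\lambda_1+\lambda_2+\lambda_3$), the error terms $\varepsilon_n$ should decay geometrically, so that $\sum_n|\varepsilon_n|<\infty$. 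Telescoping along $\mfw$ then expresses $x_i^{(n)}$ as $N_i^{(n)}\log(3+\lambda_1+\lambda_2+\lambda_3)$ plus a convergent correction, where $N_i^{(n)}\in\mbZ_{>0}$ is a purely combinatorial count read off from the first $n$ letters of $\mfw$; dividing by the corresponding classical Euclid component $E_i^{(n)}$ and invoking \Cref{thm: generalized Markov tree} yields a representation
\[
  q \;=\; L\cdot\log\!\big(3+\lambda_1+\lambda_2+\lambda_3\big) + M ,
\]
with $L=\lim_n N_i^{(n)}/E_i^{(n)}$ and $M$ the limit of the rescaled corrections. The immediate task is to prove that $L$ is a nonzero algebraic number depending only on $\mfw$ and to identify the nature of $M$.

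The next step is to settle the (eventually) periodic case, which ought to already be a rigorous theorem and is presumably what \Cref{ex: Fibonacci case} isolates. If $\mfw$ is eventually periodic, then $N_i^{(n)}$ and $E_i^{(n)}$ obey a common linear recursion over one period, so $L$ is algebraic (a ratio of Perron eigendata), and the correction $M$ should likewise reduce to a $\overline{\mbQ}$-linear combination of $\log(3+\lambda_1+\lambda_2+\lambda_3)$ and of logarithms of the algebraic units appearing in the periodic monodromy of the Markov ratios. In other words, $q$ collapses to a $\overline{\mbQ}$-linear form in logarithms of algebraic numbers in which at least one logarithm genuinely occurs with nonzero coefficient. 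For the Fibonacci branch this should be a concrete quantity of shape $\alpha\,\log(3+\lambda_1+\lambda_2+\lambda_3)/\log\varphi$ with $\varphi=\tfrac{1+\sqrt5}{2}$ and $\alpha\in\mbQ^{\times}$, whose irrationality — in fact transcendence — follows from the Gelfond--Schneider theorem, or more robustly from Baker's theorem on linear forms in logarithms, once one disposes of the finitely many potential multiplicative coincidences between $3+\lambda_1+\lambda_2+\lambda_3$ and the relevant units by hand.

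The main obstacle is the general infinite reduced mutation sequence with no periodicity. There the combinatorial ratio $L$ need not be algebraic, and $M$ is a genuinely infinite sum whose partial sums are algebraic but of unbounded degree, so the classical theory of linear forms in finitely many logarithms does not apply. A natural reformulation would be to realise the projective class of $(q,1)$ as the value of an infinite composition $g_{w_1}\circ g_{w_2}\circ\cdots$ of Möbius transformations with coefficients algebraic over $\mbQ(\log(3+\lambda_1+\lambda_2+\lambda_3))$ and driven by $\mfw$, and then to prove $q\notin\mbQ_+$ by an irrationality criterion for such continued-fraction-like limits, presumably of Roth/Schmidt subspace-theorem type, adapted to this setting. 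One must also treat case (2) of \Cref{thm: generalized Markov tree} separately: there the limit $k_\beta$ of the ratio number sequence is only implicitly defined and would first have to be shown to be algebraic (or otherwise explicitly describable) before any transcendence input could be applied. I expect the periodic case to be within reach of current tools, while the general case will require a genuinely new Diophantine ingredient.
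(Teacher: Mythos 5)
You should note at the outset that the statement you were asked to prove is posed in the paper as a \emph{conjecture}: the paper contains no proof of it, only motivation coming from \Cref{ex: Fibonacci case}, where the limit along the alternating sequence $[1,2,1,2,\dots]$ is computed explicitly as $q=2\log(\varphi)=\log\bigl(\tfrac{3+\sqrt5}{2}\bigr)$, which is visibly not rational (indeed transcendental, since $e^{q}$ is algebraic and nonzero, so already Lindemann's theorem suffices --- Gelfond--Schneider or Baker is not needed for this instance). So there is no paper proof to compare against, and your submission is, appropriately, a research program rather than an argument; it cannot be assessed as a correct proof of the conjecture, and you are right to flag it as such.

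As a sketch, your plan goes well beyond anything in the paper, but a few specifics would need repair. First, your predicted shape for the Fibonacci branch, $\alpha\,\log(3+\lambda_1+\lambda_2+\lambda_3)/\log\varphi$ with $\alpha\in\mbQ^{\times}$, does not match the value the paper actually computes: that branch falls under case (2) of \Cref{thm: generalized Markov tree}, where the ratio numbers converge to $k_\beta$ (here $\varphi^2$) rather than to $3+\lambda_1+\lambda_2+\lambda_3$, and the limit is $q=\log k_\beta=2\log\varphi$; in general for case (2) the paper's own proof gives $q=q_0+\overline{a_s}/x_s$ with $q_0=\lim\overline{k_j}$, and, as you correctly observe at the end, $k_\beta$ is only implicitly defined --- the paper itself raises determining it as an open question --- so any transcendence input is blocked there from the start. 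Second, your telescoping step assumes the error terms $\varepsilon_n=\log\bigl(k_n/(3+\lambda_1+\lambda_2+\lambda_3)\bigr)$ decay geometrically and that the accumulated corrections remain controlled after dividing by the Euclid component $E_i^{(n)}$; the paper's \Cref{thm: converge} gives only convergence of the ratio sequence with no rate, and the integer coefficients multiplying the $\varepsilon_m$ in the telescoped expression grow at Fibonacci speed, so summability of the rescaled correction $M$ is a genuine unproven claim, not a formality. With those caveats, your identification of the eventually periodic case as the tractable one (Perron eigendata for $L$, linear forms in logarithms for the resulting expression) and of aperiodic sequences as requiring a new Diophantine ingredient is a reasonable assessment of the difficulty, and is consistent with the paper's own admission that even rationality of $q$ is unknown in general.
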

The classical Markov uniqueness conjecture (\Cref{conj: Markov uniqueness}), first proposed by Frobenius \cite{Fro13} in 1913, has remained open for more than a century. It lies at the heart of the study of Diophantine geometry and discrete dynamical systems. As the last main result, we propose a generalized Markov uniqueness conjecture for the generalized Markov equations.
\begin{conjecture}[{\Cref{conj: CJ}}] If $(a,b,c)$ and $(a,b^{\prime},c^{\prime})$ are two positive integer solutions to the generalized Markov equation with $a\geq b\geq c$ and $a\geq b^{\prime}\geq c^{\prime}$, then $b=b^{\prime}$ and $c=c^{\prime}$.
\end{conjecture}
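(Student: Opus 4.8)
The starting point is the classical reduction to a lattice-point problem. By the theorem of Gyoda--Matsushita every positive integer solution of the generalized Markov equation lies in the $\langle m_1,m_2,m_3\rangle$-orbit of $(1,1,1)$, and the largest entry is strictly monotone along any non-backtracking mutation; fixing the common largest entry $a$, the assertion $b=b'$ and $c=c'$ is therefore \emph{equivalent} to the statement that the affine conic obtained by specialising $X_1=a$, namely
\[
X_2^2+X_3^2+\bigl(\lambda_1-(3+\lambda_1+\lambda_2+\lambda_3)a\bigr)X_2X_3+\lambda_3 aX_2+\lambda_2 aX_3+a^2=0 ,
\]
has at most one positive integer point in the triangular region $\{\,a\ge X_2\ge X_3\ge 1\,\}$. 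So the plan splits into two halves: a Vieta-jumping analysis of this conic, and the use of the asymptotic phenomenon to restrict which mutation sequences can produce a solution with largest entry $a$.

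For the first half, the integer points of the conic are permuted by the two Vieta involutions, for instance $X_2\mapsto\bigl((3+\lambda_1+\lambda_2+\lambda_3)a-\lambda_1\bigr)X_3-\lambda_3 a-X_2$ and its symmetric partner in $X_3$, which generate an infinite dihedral group; a standard descent shows that every orbit has a unique ``fundamental'' point and that the remaining points of an orbit fall outside the triangle $\{\,a\ge X_2\ge X_3\ge 1\,\}$, so uniqueness for the value $a$ reduces to showing the conic carries a single orbit of integer points meeting the triangle. Following Frobenius and Baragar in the classical case, one would attack this through congruences modulo $a$: reducing the conic modulo $a$ gives $a\mid b^2+c^2+\lambda_1 bc$, the pair $(b,c)$ is a root of a quadratic whose coefficients are read off from the conic, and such divisibility constraints pin down $(b,c)\bmod a$, which together with the size bound $b,c\le a$ forces uniqueness whenever $a$ has sufficiently few prime factors.

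This is where the asymptotic phenomenon supplies the missing structure. By \Cref{thm: generalized Markov tree} the logarithm of a generalized Markov triple built along a reduced mutation sequence $\mfw$ is, up to a scalar $q=q(\mfw)$, the corresponding classical Euclid chain, and by \Cref{thm:comparison-convergence} the comparison triple controls the gap with the $k$-generalized Euclid tree $\mcK$ uniformly in $\mfw$ up to a bounded error; together with the tropicalization identity of \Cref{thm: FG trop} this translates the growth of the largest entry along $\mfw$ into the additive growth of the classical Euclid tree $\mcE$, which is governed by continued-fraction data. Consequently, for a fixed target value $a$ only mutation sequences of bounded length and constrained combinatorial shape can yield a triple with largest entry $a$, so ``at most one orbit'' becomes a finite verification whose size is controlled by $a$ --- and here the two halves of the argument meet.

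The principal obstacle is precisely the passage from the asymptotic to the exact. The scalar $q$ is path-dependent and essentially uncontrolled --- by \Cref{conj: non rational} even its irrationality is only conjectural --- so the asymptotic estimates cannot by themselves collapse two finite triples; they only bound the search, and the decisive input remains the arithmetic of the conic. For this reason I expect the method to prove \Cref{conj: CJ} cleanly only in the analogues of the cases known classically, e.g.\ when $a$ is a prime power or twice a prime power, or when $\lambda_1=\lambda_2=\lambda_3$ so that the full $S_3$-symmetry of the classical Markov case is retained. A minor separate case is the dichotomy of \Cref{thm: converge}: when one of the indices $1,2,3$ appears only finitely often the limit of the ratio number sequence degenerates to some $k_\beta$, but there the relevant triple stabilises along a ray and the corresponding tail of $\mfw$ is rigid, so that subcase is comparatively direct.
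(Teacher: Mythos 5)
The statement you are asked about is not a theorem of the paper: \Cref{conj: CJ} is stated as an open conjecture, extending the century-old Frobenius uniqueness conjecture (\Cref{conj: Markov uniqueness}), and the paper offers no proof of it. What the paper does provide is only an \emph{approximate} search method (\Cref{subsec: Application}) that uses the asymptotic phenomenon of \Cref{thm: generalized Markov tree} to narrow down where a counterexample with a prescribed largest entry $a$ could sit in the tree. Your proposal, despite its framing, is likewise not a proof but a strategy sketch, and it contains genuine gaps that prevent it from establishing the conjecture.

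Concretely: your reduction to the conic in $(X_2,X_3)$ at $X_1=a$, and the observation that the two Vieta involutions are exactly $\mu_2,\mu_3$ (which fix $X_1$) so that, by \Cref{lem: monotonicity}, each $\langle\mu_2,\mu_3\rangle$-orbit meets the region $a\ge X_2\ge X_3\ge 1$ in essentially one point, is a correct \emph{reformulation} of the conjecture, not progress on it; the hard content is precisely the claim that only one such orbit exists for each $a$, and the congruence argument you invoke (``pins down $(b,c)\bmod a$\,\dots\, whenever $a$ has sufficiently few prime factors'') is the classical partial result, neither established here for general $\lambda_i$ nor covering all $a$. The asymptotic half also cannot close this gap: \Cref{thm: generalized Markov tree} and \Cref{thm:comparison-convergence} are limit statements with a path-dependent scalar $q$ and no explicit, uniform error bounds, so they can at best bound, for each \emph{fixed} $a$, the finitely many mutation sequences to be checked --- which is exactly the paper's heuristic verification scheme, not a proof valid for all $a$. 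Since two distinct exact integer triples sharing the same maximum cannot be distinguished by approximate logarithmic data alone, the decisive arithmetic input is missing, and the conjecture remains open both in the paper and in your proposal.
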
 As an application of the asymptotic phenomenon, we provide an approximate method for searching for the counter-examples if they exist, see \Cref{subsec: Application}. 
\subsection{Organization of the paper}
This paper is organized as follows. 

In \Cref{Pre}, we review basic definitions and properties about generalized cluster algebras (\Cref{Def of GS} and \Cref{def of GCA}), the generalized Euclid tree (\Cref{def: GET}) and the Fibonacci sequence (\Cref{lem: Fibonacci property} and \Cref{prop: Fibonacci converge}). 

In \Cref{FG trop}, we recall the generalized Markov equation and its relation with generalized cluster algebras (\Cref{thm: generate}). Then, we define the deformed Fock-Goncharov tropicalization of the generalized Markov tree and prove that it is essentially the same as the classical Euclid tree (\Cref{thm: FG trop}).  

In \Cref{comparison}, we define the comparison triple (\Cref{def: Comparison triple}) to compare the $k$-generalized Euclid tree $\mcK$ and the classical Euclid tree $\mcE$ (\Cref{lem: internal condition}).

In \Cref{asym of GET}, with the help of Fibonacci sequence, we prove the boundedness of the comparison triples (\Cref{prop:comparison-bounded}) and the asymptotic phenomenon between the $k$-generalized Euclid tree $\mcK$ and the classical Euclid tree $\mcE$ (\Cref{thm:comparison-convergence}).

In \Cref{asym of LGMT}, we prove the convergency of the ratio number sequence, see \Cref{thm: converge}. Furthermore, we show the asymptotic phenomenon between the logarithmic generalized Markov tree and the classical Euclid tree (\Cref{thm: generalized Markov tree}). We also propose a rationality conjecture about the limit (\Cref{conj: non rational}).

In \Cref{example}, we provide more examples about Lampe's Diophantine equation to exhibit and verify the asymptotic phenomenon, see \Cref{ex: lampe1} and \Cref{ex: lampe2}.

In \Cref{sec: GMUC}, we extend the Markov uniqueness conjecture (\Cref{conj: Markov uniqueness}) to the generalized Markov uniqueness conjecture (\Cref{conj: CJ}). As an application of the asymptotic phenomenon, we give an approximate method to roughly find  the counter-examples if they exist.
\subsection{Conventions} Throughout the paper, we use the following notations.
\begin{itemize}[leftmargin=2em]\itemsep=0pt
\item The integer ring, the set of non-negative integers, the rational number field, the set of positive national numbers, the real number field and the set of positive real numbers are denoted by $\mbZ$, $\mbN$, $\mbQ$, $\mbQ_+$, $\mbR$ and $\mbR_+$ respectively. 
\item We denote by $\text{Mat}_{n\times n}(\mbZ)$ the set of all $n\times n$ integer square matrices. An integer square matrix $B$ is said to be \emph{skew-symmetrizable} if there exists a positive integer diagonal matrix $D$ such that $DB$ is skew-symmetric and $D$ is called the \emph{left skew-symmetrizer} of $B$. 
\item For any $a\in \mbZ$, we denote $[a]_{+}=\max(a,0)$ and then $a=[a]_+-[-a]_+$.
\item For any $x\in \mbR_+$, we use $\log(x)$ to denote the natural logarithm $\log_{e}(x)$.
\item Let $\approx$ be the approximation symbol.  It indicates an informal approximation, meaning that the two quantities are only roughly equal and no rigorous asymptotic relation is intended. 
\end{itemize}

\section{Preliminaries}\label{Pre}
In this section, we recall some basic notions about the generalized cluster algebra, generalized Euclid tree and the Fibonacci sequence.
\subsection{Generalized cluster algebra}
In this subsection, we first recall the definitions and properties about the \emph{generalized cluster algebras} (GCA, for short) based on \cite{Nak15,CS14}.
\begin{definition}[\emph{Generalized seed}]\label{Def of GS}
Let $n\in \mbN_+$ and $\mcF$ be a rational function field of $n$ variables. A \emph{generalized (labeled) seed} is a triple $(\mfx,B,\mcZ)$, where
\begin{itemize}[leftmargin=2em]
\item $\mathbf{x}=(x_1, \dots, x_n)$ is an $n$-tuple of algebraically independent and generating elements of $\mcF$.
\item $B=(b_{ij})_{n\times n}\in \text{Mat}_{n\times n}(\mbZ)$ is a skew-symmetrizable matrix,  
\item $\mcZ=(Z_1,\dots,Z_n)$ is an $n$-tuple of polynomials over $\mbZ_{\geq 0}$, where 
\[Z_i(u)=z_{i,0}+z_{i,1}u+\cdots+z_{i,r_i}u^{r_i},\]
such that $z_{i,0}=z_{i,r_i}=1$. 
\end{itemize} Here, we respectively call $\mfx$  \emph{cluster}, $x_i$ \emph{cluster variable}, $B$ \emph{exchange matrix}, $Z_i$ \emph{exchange polynomial} and $r_i$ \emph{exchange degree}. Moreover, let $R=diag(r_1,\dots, r_n)$. Then, it is a positive integer diagonal matrix, which is called an \emph{exchange degree matrix}.  
\end{definition}

Note that $BR$ is still a skew-symmetrizable matrix with the skew-symmetrizer $RD$.
\begin{definition}[\emph{Generalized mutation}]
Let $(\mfx,B,\mcZ)$ be a generalized seed and $k\in \{1,\dots,n\}$. We define another generalized seed in direction $k$ by $\mu_{k}(\mfx,B,\mcZ)=(\mfx^{\prime},B^{\prime},\mcZ^{\prime})$, such that 
	\begin{itemize}[leftmargin=2em]
	\item The cluster variables $(x_1^{\prime},\dots,x_n^{\prime})$ are given by 
	\begin{align}\ 
		x_{i}^{\prime}=\left\{
		\begin{array}{ll}
			x_{k}^{-1}\left(\mathop{\prod}\limits_{i=1}^{n} x_i^{[-b_{ik}]_+}\right)^{r_k}Z_k\left(\mathop{\prod}\limits_{i=1}^{n} x_i^{b_{ik}}\right), &   \text{if}\ i=k, \\
			x_{i}, &  \text{if}\ i \neq k, 
		\end{array} \right.
	\end{align}

	\item The entries of $B^\prime=(b^\prime_{ij})_{n\times n}$ are given by \begin{align} \label{generalized matrix mutation}
		b_{ij}^{\prime}=\left\{
		\begin{array}{ll}
			-b_{ij}, &  \text{if}\ i=k \;\;\mbox{or}\;\; j=k, \\
			b_{ij}+r_k([b_{ik}]_{+}b_{kj}+b_{ik}[-b_{kj}]_{+}), &  \text{if}\ i\neq k \;\;
			\mbox{and}\; j\neq k. 
		\end{array} \right. 
	\end{align}

	\item The exchange polynomials $\mcZ^{\prime}=(Z^{\prime}_1, \dots, Z^{\prime}_n)$ are given by
		\begin{align}
		Z_{i}^{\prime}(u)=\left\{
		\begin{array}{ll}
			u^{r_k}Z_k(u^{-1}), &  \text{if}\ i=k , \\
			Z_i(u), &  \text{if}\ i\neq k . 
		\end{array} \right. 
	\end{align}

	\end{itemize}

\end{definition}

\begin{remark}\label{no coe GCA}	
Here, for our purpose, we only consider the generalized seeds and generalized mutations without coefficients. We can refer to \cite{CS14, Nak15} for the version with coefficients.
\end{remark}
It can be checked directly that $(\mfx^{\prime},B^{\prime},\mcZ^{\prime})$ is still a generalized seed and $\mu_{k}$ is involutive, that is $\mu_{k}(\mfx^{\prime},B^{\prime},\mcZ^{\prime})=(\mfx,B,\mcZ)$, see \cite{Nak23}. Hence, similar to the classical cluster pattern in \cite{FZ07}, we can define the \emph{generalized cluster pattern} $\mathbf{\Sigma}=\{(\mfx_t,B_t,\mcZ_t)|\ t\in \mbT_n\}$ to be a collection of generalized seeds which are labeled by the vertices of $n$-regular tree $\mbT_{n}$ and connected by a single generalized mutation. Then, we call $n$ the \emph{rank} of the generalized cluster pattern $\mathbf{\Sigma}$.
\begin{definition}[\emph{Generalized cluster algebra}]\label{def of GCA} For a generalized cluster pattern $\mathbf{\Sigma}$, the  \emph{generalized cluster algebra} $\mcA(\mathbf{\Sigma})$ is the $\mbQ$-subalgebra of $\mcF$ generated by all the generalized cluster variables $\{x_{i;t}|\ i=1,\dots,n;t\in\mbT_{n}\}$. 
	
\end{definition}
\begin{remark}
	In particular, when $R=I_n$, the generalized cluster pattern comes back to the classical cluster pattern defined in \cite{FZ02,FZ03,FZ07}. If we denote the mutation of classical cluster algebra by $\mu^{*}$, then there is a well-known result as follows.
\end{remark}
\begin{lemma}[{\cite{Nak15}}]\label{CA GCA mutation} Let $(\mfx,B,\mcZ)$ be a generalized seed and $k\in \{1,\dots,n\}$. Then, the following equality holds 
	\begin{align}
		\mu_{k}(B)R=\mu_k^{*}(BR). \label{eq: compatibility}
		\end{align}
\end{lemma}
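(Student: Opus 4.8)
The plan is to verify the identity $\mu_k(B)R = \mu_k^{*}(BR)$ entrywise, comparing the generalized matrix mutation formula \eqref{generalized matrix mutation} against the classical one applied to the matrix $BR$. Write $B=(b_{ij})$ and $R=\mathrm{diag}(r_1,\dots,r_n)$, so that $(BR)_{ij}=b_{ij}r_j$ and $(\mu_k(B)R)_{ij}=b^{\prime}_{ij}r_j$. Recall that the classical mutation $\mu_k^{*}$ acts on a matrix $C=(c_{ij})$ by $c^{\prime}_{ij}=-c_{ij}$ if $i=k$ or $j=k$, and $c^{\prime}_{ij}=c_{ij}+[c_{ik}]_+ c_{kj} + c_{ik}[-c_{kj}]_+$ otherwise. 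The two sides are trivially equal in the rows/columns through $k$, since both mutation rules simply flip the sign of the relevant entries and $R$ being diagonal multiplies both sides by the same $r_j$.

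First I would handle the off-$k$ block, i.e.\ $i\neq k$ and $j\neq k$. On the left, $(\mu_k(B)R)_{ij} = b^{\prime}_{ij}r_j = \bigl(b_{ij} + r_k([b_{ik}]_+ b_{kj} + b_{ik}[-b_{kj}]_+)\bigr)r_j$. On the right, applying $\mu_k^{*}$ to $C=BR$ gives $(\mu_k^{*}(BR))_{ij} = (BR)_{ij} + [(BR)_{ik}]_+ (BR)_{kj} + (BR)_{ik}[-(BR)_{kj}]_+ = b_{ij}r_j + [b_{ik}r_k]_+ b_{kj}r_j + b_{ik}r_k[-b_{kj}r_j]_+$. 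Since $r_k>0$ and $r_j>0$, we have $[b_{ik}r_k]_+ = r_k[b_{ik}]_+$ and $[-b_{kj}r_j]_+ = r_j[-b_{kj}]_+$, so the right side becomes $b_{ij}r_j + r_k[b_{ik}]_+ b_{kj}r_j + b_{ik}r_k r_j[-b_{kj}]_+ = \bigl(b_{ij} + r_k[b_{ik}]_+ b_{kj} + r_k b_{ik}[-b_{kj}]_+\bigr)r_j$, which matches the left side exactly. This is the only computation with any content, and it is immediate once one observes that positivity of the $r_i$ lets $[\,\cdot\,]_+$ pass through the scalars.

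The remaining steps are bookkeeping: for $i=k$, both $(\mu_k(B)R)_{kj} = -b_{kj}r_j$ and $(\mu_k^{*}(BR))_{kj} = -(BR)_{kj} = -b_{kj}r_j$; for $j=k$, both equal $-b_{ik}r_k$; and the diagonal case $i=j=k$ is subsumed. I do not anticipate a genuine obstacle here — the result is essentially a compatibility check, and the single point requiring care is precisely the factoring of $[\,\cdot\,]_+$ through positive scalars, which I would state explicitly. (One could alternatively cite that this is exactly how $R$ is built so that $BR$ is skew-symmetrizable with skew-symmetrizer $RD$, but the direct entrywise argument is cleanest and self-contained.)
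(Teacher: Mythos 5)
Your verification is correct and complete: the only substantive point is that $[\,\cdot\,]_+$ commutes with multiplication by the positive scalars $r_k,r_j$, which you state explicitly, and the rows/columns through $k$ are a trivial sign check. The paper gives no proof of its own (the lemma is quoted from \cite{Nak15}), and your entrywise computation is exactly the standard argument behind that citation, so there is nothing further to reconcile.
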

\begin{definition}[\emph{Reduced mutation sequence}]\label{def: reduced mutation}
	Consider a sequence ${\bf w}=[w_1,\dots,w_n,\dots]$, where $w_i\in \{1,2,3\}$ for any $i\in \mbN_+$. A sequence $\mfw$ is said to be \emph{reduced} if $w_i \neq w_{i+1}$ for any $i\in \mbN_+$. We denote the set of all reduced sequences by $\mathcal{T}$. If $\mfw=[w_1,w_2,\dots,w_n]$ is finite, then the {\em length} of $\bf w$ is finite, denoted by $|\mfw|<\infty$ and we define its length by $|\mfw|=n$. In particular, we assume that $\mfw=[\ ]=\emptyset$ is also reduced and its length is $0$. If $\mfw$ is infinite, then its length is denoted by $|\mfw|=\infty$. 
\end{definition}
\begin{remark}
	For any infinite reduced sequence ${\bf w}=[w_1,\dots,w_n,\dots]$, we can naturally identify it with a number sequence $\{w_i\}\ (i\geq 1)$. We sometimes denote its finite subsequence $[w_1,\dots,w_n]$ by $\mfw_n$.
\end{remark}
For brevity, let $\mfw=[w_1,w_2,\dots,w_n]\in \mcT$, we always denote the composition of mutations $\mu_{w_n}\circ \dots \circ \mu_{w_1}$ by $\mu^{\mfw}$. That is to say, 
\begin{align}
	\mu^{\mfw}(\mfx,B,\mcZ)=(\mu_{w_n}\circ \dots \circ \mu_{w_1})(\mfx,B,\mcZ). 
\end{align} Also, we denote $\mfw[w_n]$ to be the finite reduced sequence $[w_1,\dots,w_{n-1}]$. If $w_{n+1}\neq w_n$, then we denote $\mfw[w_{n+1}]=[w_1,\dots,w_{n},w_{n+1}]$.
\begin{example}[\emph{Type $B_2$}]\label{ex: GCA}
	Let the initial cluster be $\mfx=(x_1,x_2)$ and the initial triple $(B,\mcZ,R)$ be as follows:
\begin{align}
	B=\begin{pmatrix}0 & -1 \\ 1 & 0 \end{pmatrix}, \left\{
		\begin{array}{ll}
			Z_1(u)=1+u+u^2 \\
			Z_2(u)=1+u 
		\end{array}, \right. R=\begin{pmatrix}2 & 0 \\ 0 & 1\end{pmatrix}.
\end{align}  Note that the product matrix $BR$ is the exchange matrix for a classical cluster algebra of type $B_2$, that is  \begin{align}BR=\begin{pmatrix}0 & -1 \\ 2 & 0\end{pmatrix}.\end{align} We can easily check the relation \eqref{eq: compatibility} holds. Then, all the exchange matrices are same up to a sign in $\{\pm\}$ and all the exchange polynomials are invariant under the mutations. After a direct calculation, we get all the $6$ distinct clusters as follows:
\begin{align*}
\begin{array}{ll}
	\left\{\begin{array}{ll}
			x_{1;0}=x_1 \\
			x_{2;0}=x_2 
		\end{array}, \right. \left\{\begin{array}{ll}
			x_{1;1}=\frac{1+x_2+x_2^2}{x_1} \\
			x_{2;1}=x_2 
		\end{array}, \right. \left\{\begin{array}{ll}
			x_{1;2}=\frac{1+x_2+x_2^2}{x_1} \\
			x_{2;2}=\frac{1+x_2+x_2^2+x_1}{x_1x_2} 
		\end{array}, \right. \\ 
		\left\{\begin{array}{ll}
			x_{1;3}=\frac{1+2x_1+x_1^2+x_1x_2+x_2+x_2^2}{x_1x_2^2} \\
			x_{2;3}=\frac{1+x_2+x_2^2+x_1}{x_1x_2} 
		\end{array}, \right. \left\{\begin{array}{ll}
			x_{1;4}=\frac{1+2x_1+x_1^2+x_1x_2+x_2+x_2^2}{x_1x_2^2} \\
			x_{2;4}=\frac{1+x_1}{x_2} 
		\end{array}, \right. \left\{\begin{array}{ll}
			x_{1;5}=x_1 \\
			x_{2;5}=\frac{1+x_1}{x_2} 
		\end{array}. \right.
\end{array}
\end{align*} For more details about the relation between generalized cluster algebras and classical cluster algebras, we can refer to \cite{Nak15, NR16, Nak21}.
\end{example}
\subsection{Generalized Euclid tree} In this subsection, we define the generalized Euclid tree based on the classical Euclid tree.

\begin{definition}[\emph{$k$-initial triple}]
	Let $k\in \mbR_{\geq 0}$ and $(a,b,c)$ be a triple in $\mbR_{+}^3$. We call it an \emph{$k$-initial triple} if any component is not equal to the sum of others. Namely, it satisfies that $a\neq b+c+k$, $b\neq a+c+k$ and $c\neq a+b+k$.
\end{definition}
Sometimes, for brevity, we collectively call them \emph{initial triples} without ambiguity.
\begin{definition}[\emph{Classical Euclid tree}]\label{def: classical Euclid}
The \emph{classical Euclid tree} $\mcE$ is a $3$-regular connected graph whose vertices are the triples defined by: the $0$-initial triple is $(a,b,c)\in \mbR_{+}^3$ and the recursion formula is defined by the following three mutations:
\begin{align}
\begin{array}{cc}
  \mcM_1 (x,y,z) &= (y + z, y,z)\\
  \mcM_2 (x,y,z) &= (x,x + z,z) \\
  \mcM_3 (x,y,z) &= (x,y,x + y)
 \end{array}\label{eq: classical}
\end{align} And the edges of the graph labelled by $i$ correspond to the mutation $\mcM_{i}$. Note that $\mcM_i$ is not an involution any more as the cluster mutation $\mu_i$.
\end{definition}
It is direct that $\mcE$ is uniquely determined by the initial triple. Later, we will prove that $\mcE$ has a tree structure. In particular, if $(a,b,c)=(1,1,1)$, then $\mcE$ is the most well-known original Euclid tree, which has a deep connection with \emph{Farey triples} and the \emph{Markov type cluster algebra}, see \cite{SV17, Cha13}.
\begin{definition}[\emph{Generalized Euclid tree}]\label{def: GET}
  For any $k\in\mbR_{\geq 0}$, the \emph{$k$-generalized Euclid tree} $\mcK$ with the $k$-initial triple $(A,B,C)\in \mbR_{+}^3$ is defined recursively by the following three mutations:
  \begin{equation}
  \begin{aligned}\label{GMR}
   \mcM_{1;k} (X,Y,Z) &= (k + Y + Z, Y,Z)\\
     \mcM_{2;k}(X,Y,Z) &= (X,k + X + Z,Z) \\
    \mcM_{3;k}(X,Y,Z) &= (X,Y,k + X + Y)
  \end{aligned}
  \end{equation}
  In particular, if $k=0$, then we have $\mcM_{i;0}=\mcM_{i}$ for $i=1,2,3$ and it corresponds to the classical Euclid tree as above.
\end{definition}
Note that each triple $(X,Y,Z)$ in the $k$-generalized Euclid tree with the initial triple $(A,B,C)$ can be written by 
\begin{align}
	(X,Y,Z)=\mcM_{k}^{\mfw}(A,B,C)\coloneq(\mcM_{w_n;k}\circ\dots\circ\mcM_{w_1;k})(A,B,C),
\end{align} 
where $\mfw=[w_1,\dots,w_n]$ is a uniquely determined finite reduced sequence. 
\begin{example}
	We can refer to \Cref{graph: Classical Euclid tree} and \Cref{graph: Generalized Euclid tree} as the examples of classical and $7$-generalized Euclid trees, whose initial triples are $(1,1,1)$ and $(1,4,9)$ respectively. 
\end{example}

\begin{figure}[hbtp]
\centering
\begin{minipage}{0.465\linewidth}
\centering
\scalebox{0.7}{
\begin{tikzpicture}
\node (root) {$(1,1,1)$};
\node[above=of root] (n21) {$(2,1,1)$};
 \draw[->] (root) -- (n21) node[midway, left] {$\mcM_{1}$};
 \node[below left=0.8cm and 0.8cm of root] (n22) {$(1,2,1)$};
  \draw[->] (root) -- (n22) node[midway, left, xshift=6pt, yshift=10pt] {$\mcM_{2}$};
   \node[below right=0.8cm and 0.8cm of root] (n23) {$(1,1,2)$};
    \draw[->] (root) -- (n23) node[midway, left, xshift=20pt, yshift=10pt] {$\mcM_{3}$};
    \node[below = of n23] (n24) {$(3,1,2)$};
     \draw[->] (n23) -- (n24) node[midway, right] {$\mcM_{1}$};
      \node[above right=0.8cm and 0.8cm of n23] (n25) {$(1,3,2)$};
      \draw[->] (n23) -- (n25) node[midway, left, xshift=10pt, yshift=10pt] {$\mcM_{2}$};
\end{tikzpicture}}
\caption{Classical Euclid tree}\label{graph: Classical Euclid tree}
\end{minipage}
\begin{minipage}{0.52\linewidth}
\centering
\scalebox{0.7}{
\begin{tikzpicture}
\node (root) {$(1,4,9)$};
\node[above=of root] (n21) {$(20,4,9)$};
 \draw[->] (root) -- (n21) node[midway, left] {$\mcM_{1;7}$};
 \node[below left=0.8cm and 0.8cm of root] (n22) {$(1,17,9)$};
  \draw[->] (root) -- (n22) node[midway, left, xshift=6pt, yshift=8pt] {$\mcM_{2;7}$};
   \node[below right=0.8cm and 0.8cm of root] (n23) {$(1,4,12)$};
    \draw[->] (root) -- (n23) node[midway, left, xshift=22pt, yshift=10pt] {$\mcM_{3;7}$};
    \node[below = of n23] (n24) {$(23,4,12)$};
     \draw[->] (n23) -- (n24) node[midway, right] {$\mcM_{1;7}$};
      \node[above right=0.8cm and 0.8cm of n23] (n25) {$(1,20,12)$};
      \draw[->] (n23) -- (n25) node[midway, left, xshift=10pt, yshift=10pt] {$\mcM_{2;7}$};
\end{tikzpicture}}
\caption{$7$-generalized Euclid tree}\label{graph: Generalized Euclid tree}
\end{minipage}
\end{figure}

Now,  we show that the generalized Euclid tree is indeed equipped with a tree structure.

\begin{lemma} 
  Let $\mcE$ be the classical Euclid tree with the initial triple $(a,b,c)$ and $\mcK$ be the $k$-generalized Euclid tree with the initial triple $(A,B,C)$. Then, both $\mcE$ and $\mcK$ are $3$-regular trees. Moreover, there is a canonical isomorphism between $\mcE$ and $\mcK$.
\end{lemma}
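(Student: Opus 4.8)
The plan is to produce the canonical isomorphism directly from the combinatorics of reduced mutation sequences, and then deduce the tree property from it. First I would observe that for the classical Euclid tree $\mcE$ (and identically for $\mcK$), every triple is of the form $\mcM^{\mfw}(a,b,c)$ for a uniquely determined finite reduced sequence $\mfw\in\mcT$: the "reduced" condition is forced because applying $\mcM_i$ twice in a row does nothing new except change one coordinate again, and more importantly because $\mcM_i$ only alters the $i$-th coordinate while $\mcM_j\circ\mcM_i$ for $j\neq i$ genuinely produces a new triple. So the underlying set of vertices of $\mcE$ is in bijection with $\mcT$ via $\mfw\mapsto \mcM^{\mfw}(a,b,c)$, and the same bijection $\mfw\mapsto \mcM_k^{\mfw}(A,B,C)$ describes the vertices of $\mcK$. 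Composing these gives a candidate bijection $\Phi\colon\mcE\to\mcK$ on vertex sets, and by construction $\Phi$ sends the edge labelled $i$ at $\mcM^{\mfw}(a,b,c)$ to the edge labelled $i$ at $\mcM_k^{\mfw}(A,B,C)$, so $\Phi$ is a graph isomorphism respecting the edge labels. This simultaneously shows both graphs are $3$-regular (each vertex $\mfw$ has neighbours $\mfw[i]$ for the two indices $i\neq w_{|\mfw|}$, plus the parent $\mfw[w_{|\mfw|}]$, and the root has three children), so it suffices to establish the statement for $\mcE$ alone.

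The substantive point is therefore that $\mcE$ is a \emph{tree}, i.e. the map $\mfw\mapsto\mcM^{\mfw}(a,b,c)$ is injective — equivalently, no nontrivial reduced word acts as the identity on the starting triple, and no two distinct reduced words give the same triple. I would prove this by a growth/monotonicity argument. Define for a triple $(x,y,z)$ its sum $s = x+y+z$. Each mutation $\mcM_i$ replaces one coordinate, say $x$, by $y+z$; since we are in $\mbR_{+}^3$ and (by the initial-triple hypothesis, which propagates) $x\neq y+z$, the sum strictly changes. The key refinement is that along any reduced word the sum is \emph{strictly increasing after the first step}: after we first apply some $\mcM_i$, the replaced coordinate becomes the sum of the other two, hence is $\geq$ each of them; applying a different $\mcM_j$ next replaces one of the now-smaller coordinates by the sum of the other two (one of which is the large one), strictly increasing $s$, and an easy induction shows each subsequent reduced step strictly increases $s$. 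Hence two reduced words of different lengths sharing a common prefix cannot give the same triple, and for words of the same length one argues by induction on length: if $\mcM^{\mfw}(a,b,c)=\mcM^{\mfw'}(a,b,c)$ with $w_1\neq w'_1$, then comparing which coordinate is "large" (the sum of the other two, after the forced strict inequalities) forces $w_1=w'_1$, contradiction; if $w_1=w'_1$ one strips it off and applies the inductive hypothesis to the triple $\mcM_{w_1}(a,b,c)$, which is again an initial triple.

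The main obstacle I expect is the base of this induction — controlling the very first mutation where the strict inequalities $x\neq y+z$ etc. are exactly what the $k$-initial hypothesis guarantees, and then checking that the image $\mcM_{i}(a,b,c)$ is still an initial triple so the induction can continue. Concretely, after $\mcM_3(a,b,c)=(a,b,a+b)$ one must verify $a\neq b+(a+b)$, $b\neq a+(a+b)$, $(a+b)\neq a+b$: the first two hold because $a,b>0$, but the third fails! So I must be slightly more careful: the relevant non-degeneracy to propagate is not the full initial-triple condition but exactly the statement that at each vertex the coordinate \emph{last} mutated equals the sum of the other two and is therefore $\geq$ both, with equality to the max only in the degenerate Euclid-tree-at-$(1,1,1)$ situation; tracking this "which coordinate is distinguished and how the three compare" is the bookkeeping that makes the injectivity argument go through. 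Once injectivity is in hand, $3$-regularity is immediate from the description of neighbours above, and the canonical isomorphism $\Phi$ transports everything to $\mcK$, completing the proof. I would also remark that this is precisely the combinatorial skeleton underlying \Cref{thm: FG trop}, so the isomorphism $\Phi$ is natural and will be reused.
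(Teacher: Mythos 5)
Your overall architecture coincides with the paper's: vertices are parametrized by finite reduced words, $\Phi$ is defined by matching words, it preserves edge labels, and $3$-regularity follows from the word description. The substantive point is, as you say, injectivity of $\mfw\mapsto\mcM^{\mfw}(a,b,c)$, and there your argument has a concrete gap. The coordinate of $\mcM^{\mfw}(a,b,c)$ that equals the sum of the other two (plus $k$ in $\mcK$) is the one mutated \emph{last}; it records $w_n$, not $w_1$, and it is the unique such coordinate. So ``comparing which coordinate is large'' can only force the \emph{last} letters to agree, and your induction step ``if $w_1\neq w_1'$ then \dots forces $w_1=w_1'$'' is unsupported. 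Stripping the last letter instead runs into the fact that $\mcM_{w_n}$ forgets the replaced coordinate, so from $\mcM^{\mfw}(a,b,c)=\mcM^{\mfw'}(a,b,c)$ and $w_n=w_m'$ one cannot conclude that the two predecessor triples coincide (this is also the step that the paper's own no-cycle argument passes over). Stripping the first letter, as you propose, requires exactly the ``bookkeeping'' you defer; that bookkeeping is where the whole proof lives, and it is not a routine propagation of the initial-triple condition.

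In fact it cannot be completed at this level of generality, because the uniqueness you are trying to prove is false for some admissible initial triples. Take $k=0$ and the $0$-initial triple $(4,1,1)$ (indeed $4\neq 1+1$ and $1\neq 4+1$). Then $\mcM_3(4,1,1)=(4,1,5)$, while the reduced word $[1,3,1,3]$ gives $(4,1,1)\mapsto(2,1,1)\mapsto(2,1,3)\mapsto(4,1,3)\mapsto(4,1,5)$: the distinct reduced words $[3]$ and $[1,3,1,3]$ produce the same triple, because $(4,1,1)$ and $(4,1,3)$ agree in the first two coordinates and $\mcM_3$ only reads those. Thus, if vertices are literally the triples, the graph rooted at $(4,1,1)$ contains a $5$-cycle, and your sum-monotonicity cannot exclude this: the first step of a reduced word may decrease the sum (e.g.\ $(4,1,1)\mapsto(2,1,1)$), so the sum does not measure depth and equal triples can sit at different distances from the root. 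Your monotonicity lemma is correct and does show that no triple repeats along a single reduced path after its first step, but that is weaker than injectivity. The statement is only safe if one indexes vertices by reduced words (so the tree structure and the isomorphism $\Phi$ hold essentially by construction, which is how the tree is used later in the paper) or imposes extra genericity on the initial triple; your proposal, like the paper's argument, needs this issue addressed explicitly rather than absorbed into bookkeeping.
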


\begin{proof}
	Without loss of generality, we may only consider the $k$-generalized Euclid tree $\mcK$. Firstly, since $(A,B,C)$ is an initial triple, and by the mutation rules \eqref{GMR}, we conclude that there is no loop in $\mcK$. Suppose that there is a  cycle $\mcO$ in $\mcK$ with length no less than $3$. Note that all the triples in $\mcO$ are different. Then, there exists a unique triple $(A_1,B_1,C_1)$ and two distinct sequences $\mfw$ and $\mfw^{\prime}$, such that 
	\begin{align}
		(A_1,B_1,C_1)=\mcM_{k}^{\mfw}(A,B,C)=\mcM_{k}^{\mfw^{\prime}}(A,B,C),
	\end{align} where $\mfw=[w_1,\dots,w_{n-1},w_n]$ and $\mfw^{\prime}=[w_1^{\prime},\dots,w_{m-1}^{\prime},w_m^{\prime}]$. By the mutation rule \eqref{GMR}, we obtain that $w_n=w_m^{\prime}$ and \begin{align}
		\mcM_{k}^{\mfw[w_n]}(A,B,C)=\mcM_{k}^{\mfw^{\prime}[w_m^{\prime}]}(A,B,C),\end{align} which contradicts with the fact that all the triples in $\mcO$ are different. Hence, there is no cycle in $\mcK$ and we conclude that both $\mcE$ and $\mcK$ are $3$-regular trees.
		
		Now, we define a map $\Phi: \mcE \longrightarrow \mcK$, such that for any finite reduced sequence $\mfw=[r_1,\dots,r_n]$, \begin{align}
			\Phi(\mcM^{\mfw}(a,b,c))=\mcM_{k}^{\mfw}(A,B,C).
		\end{align} Then, it is clear that $\Phi$ is a bijection between the triples in $\mcE$ and $\mcK$ since they are both determined uniquely by $\mfw$. Moreover, let $w_{n+1}\in \{1,2,3\}\backslash\{w_n\}$. Then, we obtain the commutative diagram as follows:
\[
\xymatrix@C=2cm{
 \mcM^{\mfw}(a,b,c) \ar[r]^{\mcM_{w_{n+1}}} \ar[d]_\Phi & \mcM^{\mfw[w_{n+1}]}(a,b,c)  \ar[d]^\Phi  \\
   \mcM_k^{\mfw}(A,B,C) \ar[r]^{\mcM_{w_{n+1};k}} & \mcM_k^{\mfw[w_{n+1}]}(A,B,C) }
\]
Hence, $\phi$ is a canonical isomorphism between 
$\mcE$ and $\mcK$.
\end{proof}

\subsection{Fibonacci sequence} The study of Fibonacci sequence leads to interesting developments in various directions, such as Diophantine approximation, continued fractions. Let us recall its definition and some important properties.
\begin{definition}[\emph{Fibonacci sequence}]\label{def: Fibonacci}
	The Fibonacci sequence $\{F_n\}\ (n\geq 0)$ is defined recursively by: $F_0=0$, $F_1=1$ and for any $n\geq 0$, 
	\begin{align}
		F_{n+2}=F_{n+1}+F_n.
	\end{align} In other words, each term is the sum of the two preceding terms.
\end{definition}
The first few terms are $F_0=0,F_1=1,F_2=1,F_3=2,F_4=3,F_5=5,F_6=8$ and the sequence grows exponentially. In fact, this sequence enjoys some well-known properties as follows.
\begin{lemma}\label{lem: Fibonacci property} The following statements about Fibonacci sequence hold.
	\begin{enumerate}
		\item The Binet's formula holds: \begin{align}
			F_n=\frac{\varphi^n-\psi^n}{\sqrt{5}},\ \varphi=\frac{1+\sqrt{5}}{2}, \psi=\frac{1-\sqrt{5}}{2}. \label{Binet}
		\end{align}
		\item The addition formula holds: $
			F_{n+k}=F_kF_{n+1}+F_{k-1}F_n,\ (n,k\geq 1).$
		\item The Catalan's identity holds: $
			F_n^2-F_{n-r}F_{n+r}=(-1)^{n-r}F_r^2.$
		\item The summation identity holds: $
			F_0+F_1+\dots+F_n=F_{n+2}-1.$
	\end{enumerate}
\end{lemma}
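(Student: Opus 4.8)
All four identities are classical; I would establish them in the order listed, using \emph{(1)} as the computational engine for \emph{(3)}. For \emph{(1)}, the plan is to note that $\varphi$ and $\psi$ are exactly the two roots of the characteristic polynomial $x^{2}-x-1$ of the recursion $F_{n+2}=F_{n+1}+F_{n}$; hence for any scalars $a,b$ the sequence $n\mapsto a\varphi^{n}+b\psi^{n}$ obeys the same recursion, and it suffices to choose $a,b$ so that the initial data $F_{0}=0$, $F_{1}=1$ are matched. Solving $a+b=0$ and $a\varphi+b\psi=1$, and using $\varphi-\psi=\sqrt 5$, gives $a=-b=1/\sqrt 5$; a short induction on $n$ then upgrades this to Binet's formula for every $n\ge 0$. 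Along the way I would record $\varphi+\psi=1$ and $\varphi\psi=-1$, both of which are used below.

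For \emph{(2)} I would fix $n\ge 1$ and induct on $k$. The base cases $k=1$ and $k=2$ follow at once from $F_{0}=0$, $F_{1}=F_{2}=1$, and the recursion; for the inductive step, add the identities for $k$ and $k-1$ and collapse the right-hand side using $F_{k+1}=F_{k}+F_{k-1}$ together with $F_{n+k+1}=F_{n+k}+F_{n+k-1}$. An equally good route: \emph{(2)} is the top-left entry of the matrix identity $Q^{\,n+k}=Q^{\,n}Q^{\,k}$ for $Q=\left(\begin{smallmatrix}1&1\\1&0\end{smallmatrix}\right)$, whose powers satisfy $Q^{\,m}=\left(\begin{smallmatrix}F_{m+1}&F_{m}\\ F_{m}&F_{m-1}\end{smallmatrix}\right)$ by an immediate induction. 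For \emph{(3)} I would substitute \emph{(1)} and compute directly: after clearing denominators,
\[
5\bigl(F_{n}^{2}-F_{n-r}F_{n+r}\bigr)=(\varphi^{n}-\psi^{n})^{2}-(\varphi^{n-r}-\psi^{n-r})(\varphi^{n+r}-\psi^{n+r}),
\]
and upon expanding, the pure powers $\varphi^{2n}$ and $\psi^{2n}$ cancel while the surviving mixed terms assemble, via $\varphi\psi=-1$, into $(\varphi\psi)^{n-r}(\varphi^{r}-\psi^{r})^{2}=(-1)^{n-r}\cdot 5F_{r}^{2}$; dividing by $5$ gives the claim.

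Finally, for \emph{(4)} I would telescope: from the recursion, $F_{i}=F_{i+2}-F_{i+1}$ for every $i\ge 0$, so $\sum_{i=0}^{n}F_{i}=F_{n+2}-F_{1}=F_{n+2}-1$ (a one-line induction on $n$ works equally well). There is no substantive obstacle in any of this; the only points requiring a little care are the edge cases $r=0$ and $r=n$ in \emph{(3)} — checked directly, or absorbed by the convention $F_{-1}=1$ — and, in \emph{(2)}, the hypothesis $n,k\ge 1$, which guarantees that every Fibonacci index appearing in the argument is nonnegative.
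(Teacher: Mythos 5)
Your proposal is correct: all four identities are proved by the standard classical arguments (characteristic roots for Binet, induction or the $Q$-matrix for the addition formula, expansion via Binet with $\varphi\psi=-1$ for Catalan, and telescoping for the sum), and your handling of the edge cases is fine. The paper itself states this lemma without proof, treating the identities as well-known facts (it only uses Binet's formula, for the bound $F_n\geq\varphi^{n-2}$ in the convergence of $\sum 1/F_n$, and Catalan's identity with $r=2$ in a later example), so your write-up simply supplies the omitted standard details and is entirely consistent with how the paper uses the lemma.
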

\begin{proposition}\label{prop: Fibonacci converge}
	The series of the sum of reciprocals of the Fibonacci sequence is convergent. That is to say, 
	\begin{align}
		\sum_{n=1}^{\infty}\frac{1}{F_n}<\infty.
	\end{align}
\end{proposition}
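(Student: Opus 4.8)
The plan is to show that the series $\sum_{n=1}^{\infty} 1/F_n$ converges by comparison with a convergent geometric series, exploiting the exponential growth of the Fibonacci numbers that is already exhibited by Binet's formula \eqref{Binet}. First I would observe that from $F_n = (\varphi^n - \psi^n)/\sqrt 5$ with $\varphi = (1+\sqrt 5)/2 > 1$ and $\psi = (1-\sqrt 5)/2 \in (-1,0)$, we have $|\psi|^n \to 0$, so for all sufficiently large $n$ (say $n \geq N_0$) one gets $\varphi^n - \psi^n \geq \tfrac12 \varphi^n$, hence
\[
F_n \;\geq\; \frac{\varphi^n}{2\sqrt 5}, \qquad n \geq N_0.
\]
Equivalently $1/F_n \leq 2\sqrt 5\,\varphi^{-n}$ for $n \geq N_0$.

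The second step is the comparison itself: since $\varphi > 1$, the geometric series $\sum_{n\geq N_0} \varphi^{-n}$ converges (with sum $\varphi^{-N_0}/(1-\varphi^{-1})$), so by the comparison test the tail $\sum_{n \geq N_0} 1/F_n$ converges. Adding the finite initial sum $\sum_{n=1}^{N_0-1} 1/F_n$ — which is a finite real number, noting $F_n \geq 1$ for all $n\geq 1$ so no term blows up — we conclude $\sum_{n=1}^\infty 1/F_n < \infty$.

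An even cleaner route, which avoids choosing $N_0$, is to note the elementary bound $F_n \geq \varphi^{n-2}$ for all $n \geq 1$, proved by a two-step induction using $F_1 = F_2 = 1 \geq \varphi^{-1}, \varphi^0$ and $F_{n+2} = F_{n+1} + F_n \geq \varphi^{n-1} + \varphi^{n-2} = \varphi^{n-2}(\varphi + 1) = \varphi^{n-2}\varphi^2 = \varphi^n$, where the defining relation $\varphi^2 = \varphi + 1$ is used. Then $1/F_n \leq \varphi^{2-n}$ term-by-term, and $\sum_{n=1}^\infty \varphi^{2-n} = \varphi^2 \cdot \frac{\varphi^{-1}}{1-\varphi^{-1}} = \frac{\varphi^2}{\varphi - 1} < \infty$ directly dominates the whole series. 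I would likely present this version since it is self-contained and gives an explicit bound.

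There is no real obstacle here — the only mild care needed is the $n \geq N_0$ versus all-$n$ bookkeeping in the first approach, or verifying the base cases of the induction in the second. The statement is a standard fact (the sum of the reciprocal Fibonacci numbers converges to the so-called reciprocal Fibonacci constant), included here presumably because \Cref{prop: Fibonacci converge} is invoked later to establish boundedness of the comparison triples (\Cref{prop:comparison-bounded}), so a short comparison-test proof is exactly what is wanted.
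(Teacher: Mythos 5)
Your proposal is correct and its preferred version is essentially the paper's proof: both use the bound $F_n \geq \varphi^{n-2}$ (the paper cites Binet's formula, you verify it by a short induction) and then compare $\sum 1/F_n$ with the convergent geometric series $\sum \varphi^{2-n}$. The only difference is this cosmetic choice of how to justify the lower bound, plus your alternative first route with an $N_0$-tail, neither of which changes the argument in substance.
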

\begin{proof}
	Denote $\varphi=\frac{1+\sqrt{5}}{2}$. By the Binet's formula \eqref{Binet}, we have $F_n\geq \varphi^{n-2}$ for $n\geq 2$, which implies that $\frac{1}{F_n}\leq \varphi^{2-n}$. Hence, we obtain that 
	\begin{align}
		\sum_{n=2}^{\infty}\frac{1}{F_n}\leq \sum_{n=2}^{\infty}\varphi^{2-n}=\sum_{m=0}^{\infty}\varphi^{-m}=\frac{1}{1-\varphi^{-1}}<\infty,
	\end{align}  which implies that this series is convergent.
\end{proof}

\section{Fock-Goncharov tropicalization of generalized Markov equations}\label{FG trop}
In this section, we study the Fock-Goncharov tropicalization of generalized Markov equations. To start with, let us remind the relation between the generalized Markov equations and the generalized cluster algebras.
\subsection{Generalized Markov equation}
In \cite{GM23}, Gyoda and Matsushita introduced the generalized Markov equation, which is a generalization of the classical Markov equation.
The generalized Markov equations are given as follows:
\begin{align}
  X_1^2 + X_2^2 + X_3^2 + \lambda_3 X_1 X_2 + \lambda_1 X_2 X_3 + \lambda_2 X_3 X_1 = (3+ \lambda_1 + \lambda_2 + \lambda_3) X_1 X_2 X_3, \label{eq: GME}
\end{align}
where $\lambda_1, \lambda_2, \lambda_3 \in \mbZ_{\geq 0}$. In particular, if $\lambda_1=\lambda_2=\lambda_3=0$, it is the classical Markov equation.

Firstly, we recall the structure of generalized cluster algebra behind. Let the initial triple $(B,\mcZ,R)$ be as follows:
\begin{align}
	B=\begin{pmatrix}0 & 1 & -1\\ -1 & 0 & 1\\ 1 & -1 & 0\end{pmatrix}, \left\{
		\begin{array}{ll}
			Z_1(u)=1+\lambda_1u+u^2 \\
			Z_2(u)=1+\lambda_2u+u^2\\ 
			Z_3(u)=1+\lambda_3u+u^2 
		\end{array}, \right. R=\begin{pmatrix}2 & 0 & 0\\ 0 & 2 & 0\\ 0 & 0 & 2\end{pmatrix}.
\end{align}
Then, the mutation rules are given by
\begin{align}
\begin{array}{cc}
    \mu_1(x_1, x_2, x_3) &= (\dfrac{x_2^2 + \lambda_1 x_2 x_3 + x_3^2}{x_1}, x_2, x_3)\\
    \mu_2(x_1, x_2, x_3) &= (x_1, \dfrac{x_1^2 + \lambda_2 x_1 x_3 + x_3^2}{x_2}, x_3)\\
    \mu_3(x_1, x_2, x_3) &= (x_1, x_2, \dfrac{x_1^2 + \lambda_3 x_1 x_2 + x_2^2}{x_3})
\end{array}.\label{mutation rule}
\end{align}
In fact, these mutation rules can be naturally regarded as the maps $\mu_i: \mbQ^3_{+} \to \mbQ^3_{+}$.
Recall that for any finite mutation sequence $\mfw=[w_1,w_2,\dots,w_n]\in \mcT$, we denote 
\begin{align}
	\mu^{\mfw}(x_1,x_2,x_3)=(\mu_{w_n}\circ\dots\circ\mu_{w_1})(x_1,x_2,x_3).
\end{align} 
\begin{lemma}[{\cite[Lemma 4 \& Corollary 6]{GM23}}]\label{lem: monotonicity} The following statements hold.
	\begin{enumerate}
		\item The only solutions to \eqref{eq: GME} that contain repeated numbers are $(1,1,1)$, $(k_1+2,1,1)$, $(1,k_2+2,1)$ and $(1,1,k_3+2)$, which are said to be singular.
		\item Let $(a,b,c)\neq (1,1,1)$ be a  positive integer to \eqref{eq: GME}. Set $(a^{\prime},b,c)=\mu_1(a,b,c)$, $(a,b^{\prime},c)=\mu_2(a,b,c)$ and $(a,b,c^{\prime})=\mu_3(a,b,c)$. 
		\begin{itemize}[leftmargin=0em]\itemsep=0pt \item If $a=\max(a,b,c)$, then $a^{\prime}\neq \max(a,b,c)$, $b^{\prime}=\max(a,b^{\prime},c)$ and $c^{\prime}=\max(a,b,c^{\prime})$.
		\item If $b=\max(a,b,c)$, then $a^{\prime}= \max(a,b,c)$, $b^{\prime}\neq \max(a,b^{\prime},c)$ and $c^{\prime}=\max(a,b,c^{\prime})$.
		\item If $c=\max(a,b,c)$, then $a^{\prime}= \max(a,b,c)$, $b^{\prime}=\max(a,b^{\prime},c)$ and $c^{\prime}\neq \max(a,b,c^{\prime})$.

		\end{itemize}
	\end{enumerate}
\end{lemma}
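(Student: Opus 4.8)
The plan is to prove both parts by elementary manipulation of \eqref{eq: GME} together with Vieta's relations, in the spirit of Markov's classical descent; write $\lambda:=\lambda_1+\lambda_2+\lambda_3$ throughout. The one structural fact used over and over is that, with $X_2=b$ and $X_3=c$ held fixed, \eqref{eq: GME} is a \emph{monic} quadratic in $X_1$ whose two roots are $a$ and the first coordinate $a'$ of $\mu_1(a,b,c)$; hence, by Vieta,
\[
aa'=b^2+\lambda_1bc+c^2,\qquad a+a'=(3+\lambda)bc-\lambda_3b-\lambda_2c,
\]
and the analogous statements hold for $X_2$ (governed by $\lambda_2,\mu_2$) and $X_3$ (governed by $\lambda_3,\mu_3$). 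Note also that \eqref{eq: GME} is invariant under any simultaneous permutation of $(X_1,X_2,X_3)$ and $(\lambda_1,\lambda_2,\lambda_3)$, which lets us normalize positions freely.

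\emph{For Part (1)}, let $(X_1,X_2,X_3)$ be a positive integer solution with two equal entries; after a permutation assume $X_2=X_3=b$ and set $x:=X_1$. Then $x$ is an integer root of the monic integer quadratic $t^2-\big((3+\lambda)b^2-(\lambda_2+\lambda_3)b\big)t+(2+\lambda_1)b^2$, so its discriminant $b^2\big(N^2-4(2+\lambda_1)\big)$, where $N:=(3+\lambda)b-(\lambda_2+\lambda_3)$, is a perfect square; thus $N^2-4(2+\lambda_1)=M^2$ for some integer $M\ge 0$. Since $N\pm M$ are both even, factoring $(N-M)(N+M)=4(2+\lambda_1)$ gives positive integers $r,s$ with $rs=2+\lambda_1$ and $N=r+s$, whence
\[
(3+\lambda)b=N+\lambda_2+\lambda_3=r+s+\lambda_2+\lambda_3\le(rs+1)+\lambda_2+\lambda_3=3+\lambda,
\]
forcing $b=1$. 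Substituting $X_2=X_3=1$ into \eqref{eq: GME} yields $(x-1)\big(x-(2+\lambda_1)\big)=0$, so $x\in\{1,\lambda_1+2\}$; running the two remaining normalizations produces exactly $(1,1,1)$, $(\lambda_1+2,1,1)$, $(1,\lambda_2+2,1)$, $(1,1,\lambda_3+2)$, i.e. the asserted list with $k_i=\lambda_i$.

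\emph{For Part (2)}, note first that by Part (1) any solution $\ne(1,1,1)$ has a \emph{unique} largest entry. Suppose $a=\max(a,b,c)$, so $a>b$ and $a>c$. The two ``increasing'' claims are one-line estimates: $b'=\frac{a^2+\lambda_2ac+c^2}{b}\ge\frac{a^2}{b}>a>c$, so $b'=\max(a,b',c)$, and likewise $c'=\frac{a^2+\lambda_3ab+b^2}{c}>a>b$, so $c'=\max(a,b,c')$. For the ``decreasing'' claim assume $b\ge c$ (the case $c>b$ is symmetric after interchanging the roles of $X_2,X_3$ and of $\lambda_2,\lambda_3$). Since $a,a'$ are the two roots of $t^2-pt+q$ with $p=(3+\lambda)bc-\lambda_3b-\lambda_2c$ and $q=b^2+\lambda_1bc+c^2$, it suffices to prove $(a-b)(a'-b)<0$, i.e. $q<b(p-b)$, i.e.
\[
(\lambda_3+2)\,b^2+(\lambda_1+\lambda_2)\,bc+c^2\;<\;(3+\lambda)\,b^2c,
\]
which follows monomial by monomial from $1\le c\le b$ (each of $\lambda_3b^2$, $\lambda_1bc$, $\lambda_2bc$, $2b^2$, $c^2$ is dominated by one of $\lambda_3b^2c$, $\lambda_1b^2c$, $\lambda_2b^2c$, $2b^2c$, $b^2c$), with equality precisely when $b=c=1$, the case $\mu_1(a,1,1)=(1,1,1)$. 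As $a>b$, this gives $a'<b=\max(b,c)$, so $a'$ is not the largest entry of $(a',b,c)$. The cases $b=\max$ and $c=\max$ follow from the same two estimates after relabeling the coordinates and the corresponding $\lambda_i$.

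The only genuinely delicate steps — and hence the main obstacle — are the two weighted inequalities: the collapse $r+s\le rs+1=3+\lambda_1<(3+\lambda)b$ (for $b\ge 2$) in Part (1), and the termwise polynomial inequality in Part (2). In each one must check that every $\lambda_i$-weighted monomial on the ``small'' side is absorbed, and must isolate the boundary configurations with two coordinates equal to $1$, where the mutation collapses onto a singular triple — in particular onto $(1,1,1)$, which is why the statements are phrased for $(a,b,c)\ne(1,1,1)$. Everything else is routine Vieta bookkeeping.
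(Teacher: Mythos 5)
The paper itself offers no proof of \Cref{lem: monotonicity}: it is quoted from \cite[Lemma 4 \& Corollary 6]{GM23}, so there is no internal argument to compare against, and your write-up has to stand as a self-contained proof. As such it checks out. For part (1), viewing \eqref{eq: GME} with $X_2=X_3=b$ as the monic quadratic $t^2-\bigl((3+\lambda)b^2-(\lambda_2+\lambda_3)b\bigr)t+(2+\lambda_1)b^2$ is correct, the discriminant is indeed $b^2\bigl(N^2-4(2+\lambda_1)\bigr)$ with $N=(3+\lambda)b-(\lambda_2+\lambda_3)$, the parity argument giving $(N-M)(N+M)=4(2+\lambda_1)$ with both factors even and positive is valid, and $N=r+s\le rs+1=3+\lambda_1$ forces $b=1$; the permutation symmetry that moves $\lambda_i$ along with $X_i$ is exactly the right normalization. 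For part (2), the Vieta data $a+a'=(3+\lambda)bc-\lambda_3b-\lambda_2c$, $aa'=b^2+\lambda_1bc+c^2$ and the growth estimates $b'\ge a^2/b>a$, $c'\ge a^2/c>a$ (using the uniqueness of the maximum from part (1)) are correct, as is the termwise verification of $(\lambda_3+2)b^2+(\lambda_1+\lambda_2)bc+c^2\le(3+\lambda)b^2c$.

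One caveat on the descent step. You read the first bullet as ``$a'$ is not the largest entry of the new triple $(a',b,c)$,'' and in your closing paragraph you attribute the boundary case to the hypothesis $(a,b,c)\neq(1,1,1)$. That hypothesis does not exclude the boundary: for $(a,b,c)=(\lambda_1+2,1,1)$ one has $a=\max(a,b,c)$, $\mu_1$ sends it to $(1,1,1)$, and $a'=1$ ties for the maximum of the new triple, so the stronger reading genuinely fails there — this is exactly the configuration $b=c=1$ where your inequality degenerates to equality. Under the literal statement as printed ($a'\neq\max(a,b,c)$, i.e.\ the \emph{old} maximum), your computation still suffices, since $(a-b)(a'-b)\le 0$ holds in all cases and gives $a'\le b<a$. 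So either keep the literal reading, or exclude the singular triples explicitly before asserting $a'<\max(b,c)$; as written, the sentence ``As $a>b$, this gives $a'<b$'' uses a strictness that fails precisely at $b=c=1$, even though the clause just before it correctly identifies that exceptional case.
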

\begin{theorem}[{\cite[Theorem 1]{GM23}}]\label{thm: generate}
	Every positive integer solution $(a,b,c)$ to \eqref{eq: GME} can be generated by the initial solution $(1,1,1)$ by finitely many mutations. That is to say, there exists $\mfw \in \mcT$ with $|\mfw|< \infty$, such that 
	\begin{align}
		(a,b,c)=\mu^{\mfw}(1,1,1).
	\end{align}
\end{theorem}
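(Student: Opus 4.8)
The plan is to carry out Markov's classical descent, with \Cref{lem: monotonicity} furnishing the ``mutate at the largest entry'' step in this non-symmetric situation, and then to reverse the mutations using that each $\mu_i$ is an involution. \emph{Step 1 (mutations preserve positive integer solutions).} For each $i$, reading \eqref{eq: GME} as a monic quadratic in $X_i$ shows that its two roots are $x_i$ and the $i$-th coordinate of $\mu_i(x_1,x_2,x_3)$; in particular $a\,a'=b^{2}+\lambda_1 bc+c^{2}$, where $a'$ denotes the first coordinate of $\mu_1(a,b,c)$. Hence each $\mu_i$ maps the set of positive real solutions of \eqref{eq: GME} to itself and is an involution (this also follows from the generalized cluster structure behind \eqref{eq: GME}). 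If moreover $(a,b,c)$ is a positive integer solution, then $a'=(b^{2}+\lambda_1 bc+c^{2})/a$ is a rational root of a monic integer polynomial, hence an integer, and $a'>0$ since $a\,a'>0$; so $\mu_i$ preserves the set of positive integer solutions of \eqref{eq: GME}.

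\emph{Step 2 (a strictly decreasing size).} Put $N(a,b,c)=a+b+c\in\mbZ_{\ge 1}$. I claim every positive integer solution $(a,b,c)\neq(1,1,1)$ of \eqref{eq: GME} admits an index $i$ with $N(\mu_i(a,b,c))<N(a,b,c)$. By \Cref{lem: monotonicity}(2) there is a unique $i$ with $x_i=\max(a,b,c)$; take $i=1$ for concreteness (the cases $i=2,3$ being symmetric), so that $a>b$ and $a>c$. Then $\mu_1$ fixes $b,c$ and replaces $a$ by the second root $a'=(b^{2}+\lambda_1 bc+c^{2})/a$ of
\[
f(t)=t^{2}-\bigl((3+\lambda_1+\lambda_2+\lambda_3)bc-\lambda_3 b-\lambda_2 c\bigr)t+\bigl(b^{2}+\lambda_1 bc+c^{2}\bigr).
\]
If $b=c=1$ then $(a,1,1)$ is a singular solution from \Cref{lem: monotonicity}(1) and a direct check gives $a'=1<a$. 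Otherwise, evaluating $f$ at $M:=\max(b,c)$ and using $b,c\ge 1$ yields $f(M)<0$; since the two roots of $f$ are the positive numbers $a,a'$ and $a>M$, it follows that $a'<M<a$. In all cases $a'<a$, hence $N(\mu_1(a,b,c))<N(a,b,c)$, proving the claim.

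\emph{Step 3 (termination and reversal).} Starting from an arbitrary positive integer solution $(a,b,c)$ and repeatedly applying the size-decreasing mutation of Step 2 produces triples whose $N$-values strictly decrease in $\mbZ_{\ge 1}$; this halts after finitely many steps, say $n$, and by the claim of Step 2 it can only halt at $(1,1,1)$. Writing $i_1,\dots,i_n$ for the directions used, we get $(\mu_{i_n}\circ\cdots\circ\mu_{i_1})(a,b,c)=(1,1,1)$; moreover $[i_1,\dots,i_n]$ has no two consecutive equal terms, since after each mutation the (unique) maximum lies at a position different from the one just mutated (\Cref{lem: monotonicity}). Applying the involutions $\mu_{i_j}$ in reverse order yields
\[
(a,b,c)=(\mu_{i_1}\circ\cdots\circ\mu_{i_n})(1,1,1)=\mu^{\mfw}(1,1,1),\qquad\mfw:=[i_n,\dots,i_1]\in\mcT,\quad|\mfw|=n<\infty,
\]
while $(a,b,c)=(1,1,1)$ corresponds to $\mfw=\emptyset$. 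This is exactly the assertion.

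The step requiring real work is the strict inequality $a'<a$ in Step 2: one must outweigh the ``inflated'' constant term $b^{2}+\lambda_1 bc+c^{2}$ by $a^{2}$ uniformly in $\lambda_1,\lambda_2,\lambda_3$, and separately dispatch the singular triples --- the presence of the $\lambda_i$ having destroyed the $S_3$-symmetry that makes the classical Markov descent so transparent.
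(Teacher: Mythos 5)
Your proposal is correct. Note that the paper does not prove this statement at all: it is imported verbatim as \cite[Theorem 1]{GM23}, so there is no internal proof to compare against; your Vieta-jumping descent (mutate at the unique maximal entry, show the new entry is strictly smaller, terminate by well-ordering at $(1,1,1)$, then reverse using involutivity) is exactly the standard argument and, as far as I can tell, the same strategy as in Gyoda--Matsushita. The one step you assert rather than compute, $f(M)<0$ for $M=\max(b,c)\ge 2$, does check out: writing $f(M)$ as the $\lambda$-free part plus $\lambda_1$-, $\lambda_2$-, $\lambda_3$-terms, each $\lambda_i$-term is of the form $\lambda_i\cdot(\text{positive})\cdot(1-b)$ or $(1-c)$, hence $\le 0$, while the $\lambda$-free part $2M^2+m^2-3M^2m$ (with $m=\min(b,c)$) is negative once $M\ge 2$; so $M$ lies strictly between the roots $a'$ and $a$, giving $a'<M<a$, and this also yields the reducedness of the mutation word since the new maximum sits at a different index. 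Two cosmetic points: the uniqueness of the maximal entry for solutions other than $(1,1,1)$ follows from \Cref{lem: monotonicity}(1) (repeated entries occur only in the singular triples, where the repeats equal $1$), not from part (2) as you cite; and the cases $i=2,3$ are not literally symmetric but follow by the same estimate with the $\lambda_i$ permuted along with the variables.
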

For example, if we take $\lambda_1=0$ and $\lambda_2=\lambda_3=2$, the generating rules of the generalized Markov triples can be referred to \Cref{tree: generalized Markov triples}.
\begin{figure}[htbp]
\begin{align}
\scalebox{0.85}{
\begin{xy}(0,0)*+{(1,1,1)}="0",(20,20)*+{(2,1,1)}="1",(20,0)*+{(1,4,1)}="1'",(20,-20)*+{(1,1,4)}="1''",(45,50)*+{(2,9,1)}="20",(45,30)*+{(2,1,9)}="21",(45,10)*+{(17,4,1)}="22",(45,-10)*+{(1,4,25)}="23",(45,-30)*+{(17,1,4)}="24",(45,-50)*+{(1,25,4)}="25",(80,55)*+{(41,9,1)\cdots}="40",(80,45)*+{(2,9,121)\cdots}="41", (80,35)*+{(41,1,9)\cdots}="42", (80,25)*+{(2,121,9)\cdots}="43", (80,15)*+{(17,81,1)\cdots}="44", (80,5)*+{(17,4,441)\cdots}="45", (80,-5)*+{(641,4,25)\cdots}="46", (80,-15)*+{(1,169,25)\cdots}="47", (80,-25)*+{(17,441,4)\cdots}="48", (80,-35)*+{(17,1,81)\cdots}="49", (80,-45)*+{(641,25,4)\cdots}="410", (80,-55)*+{(1,25,169)\cdots}="411", \ar@{-}^{\mu_1}"0";"1"\ar@{-}^{\mu_2}"0";"1'"\ar@{-}_{\mu_3}"0";"1''"\ar@{-}^{\mu_2}"1";"20"\ar@{-}_{\mu_3}"1";"21"\ar@{-}^{\mu_1}"1'";"22"\ar@{-}_{\mu_3}"1'";"23"\ar@{-}^{\mu_1}"1''";"24"\ar@{-}_{\mu_2}"1''";"25"\ar@{-}^{\mu_1}"20";"40"\ar@{-}_{\mu_3}"20";"41"\ar@{-}^{\mu_1}"21";"42"\ar@{-}_{\mu_2}"21";"43"\ar@{-}^{\mu_2}"22";"44"\ar@{-}_{\mu_3}"22";"45"\ar@{-}^{\mu_1}"23";"46"\ar@{-}_{\mu_2}"23";"47"\ar@{-}^{\mu_2}"24";"48"\ar@{-}_{\mu_3}"24";"49"\ar@{-}^{\mu_1}"25";"410"\ar@{-}_{\mu_3}"25";"411"
\end{xy}}\notag
\end{align}
\caption{generalized Markov triples for $\lambda_1=0,\lambda_2=\lambda_3=2$}
\label{tree: generalized Markov triples}
\end{figure}
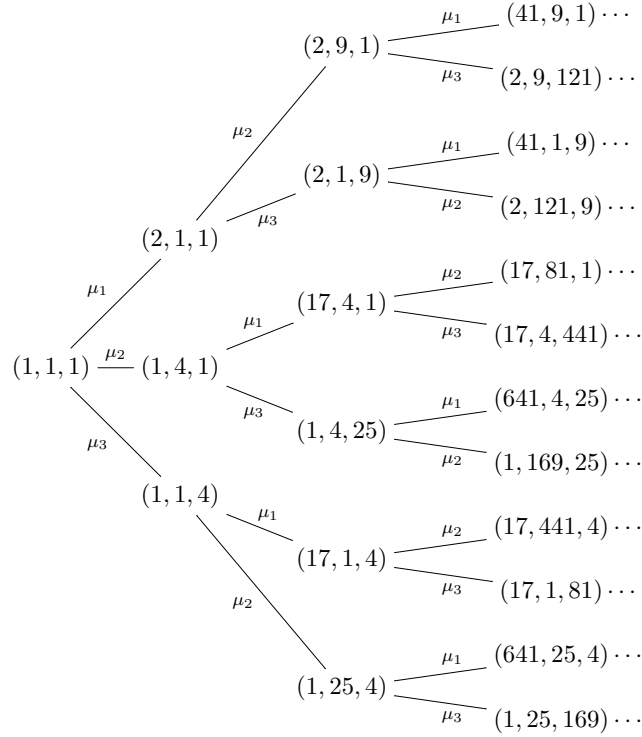
\subsection{Deformed Fock-Goncharov tropicalization}
Now, we briefly recall some basic notions about positive space and tropicalization from \cite{FG09, She14}.

A \emph{positive space} is a variety $\mcA$ equipped with a positive atlas $\mcC_{\mcA}$. That is to say, the transition maps between the coordinate systems in $\mcC_{\mcA}$ are rational functions, which are in the form of a ratio of two polynomials with positive integral coefficients. Such positive space is denoted by $(\mcA, \mcC_{\mcA})$.

Then, we recall that a \emph{universally positive Laurent polynomial} on $\mcA$ is a regular function on $\mcA$, such that it is always a Laurent polynomial with nonnegative integral coefficients in every coordinate system in $\mcC_{\mcA}$. We denote by $\mbL_+(\mcA)$ the set of all the universally positive Laurent polynomials on $\mcA$.
\begin{definition}[\emph{Semifield}]
	A \emph{semifield} is a multiplicative abelian group $(\mbP,\cdot)$ which is equipped with an additive operation $\oplus$ such that for any $a,b,c \in \mbP$,
\begin{enumerate}[leftmargin=2em]
	\item $a \oplus b=b \oplus a$,
	\item $(a \oplus b)c = ac \oplus bc$,
	\item $(a \oplus b) \oplus c = a \oplus (b \oplus c).$
\end{enumerate}
Then, we denote it by $(\mbP, \cdot, \oplus)$.
\end{definition}
\begin{example}\label{ex: semifield} The following are two important examples of semifields.
\begin{enumerate}[leftmargin=2em]
	\item Let $\mbA^t$ be a set of numbers, where $\mbA=\mbZ,\mbQ,\mbR$. Then, it becomes a semifield if the multiplication and addition are given by 
	\begin{align}
		a \cdot b \coloneq a+b,\ a\oplus b \coloneq \max\{a,b\}.
	\end{align}
	\item  Let $\mbP_{\text{trop}}=\text{Trop}(u_1,\dots, u_n)$ be a multiplicative abelian group freely generated by formal variables $u_1,\dots,u_n$ with addition $\oplus$ as follows:
\begin{align}
\prod_{i=1}^n u_i^{a_i} \oplus \prod_{i=1}^{n} u_i^{b_i}\coloneq\prod_{i=1}^{n} u_j^{\min(a_i,b_i)}.
\end{align} Then, it is a semifield, which is called the \emph{tropical semifield}.
\end{enumerate}
\end{example} 
In fact, for any positive space $\mcA$, the transition maps are subtraction-free. Hence, we can take any semifield $\mbP$ and consider the set $\mcA(\mbP)$ of $\mbP$-points of $\mcA$. Note that $\mcA(\mbP) \simeq \mbP^n$, see \cite{She14}. Let $\mcA(\mbZ^t)$ be the set of $\mbZ^t$-points. Then, we can tropicalize $F\in \mbL_+(\mcA)$ by evaluating it on $\mcA(\mbZ^t)$, denoted by $F^t$. It can be checked directly that $F^t$ is a convex piecewise linear function in each positive coordinate system. 
\begin{example}
	Let $F=(X_1^2+X_2^2+X_3^2+\lambda_3X_1X_2+\lambda_1X_2X_3+\lambda_2X_3X_1)(X_1X_2X_3)^{-1}$ be a positive integral Laurent polynomial, where $\lambda_i\in \mbN_+$. Then, we consider its tropicalization $F^t$ in $\mcA(\mbZ^t)$. When taking the maximum, we can drop the coefficients of monomials in $F$ since they do not matter. Hence, we obtain that
	\begin{align}
		\begin{array}{ll}	F^t&=\max(2X_1,2X_2,2X_3,X_1+X_2,X_2+X_3,X_3+X_1)-(X_1+X_2+X_3)\\ &=\max(2X_1,2X_2,2X_3)-(X_1+X_2+X_3).
		\end{array}
	\end{align}
\end{example}
A more explicit expression of the tropicalization can be obtained as follows.
\begin{proposition}[cf. {\cite{FG09}}] \label{prop: trop}
	The positive integral Laurent polynomial $F$ and its tropicalization $F^t$ are related as follows:
	\begin{align}
		\lim_{C\rightarrow \infty}\dfrac{\log F(e^{CX_1},\dots,e^{CX_n})}{C}=F^t(X_1,\dots,X_n),\ X_i\in \mbZ.
	\end{align}
\end{proposition}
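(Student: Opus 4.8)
The plan is to prove the limit formula by expanding $F$ as a finite sum of monomials and analysing which monomial dominates as $C\to\infty$. Write $F=\sum_{\alpha\in S} c_\alpha\, x^\alpha$, where $S\subset\mbZ^n$ is a finite set of exponent vectors, $c_\alpha\in\mbZ_{\geq 1}$ (nonnegativity and, in fact, positivity of the coefficients is exactly the hypothesis $F\in\mbL_+(\mcA)$, and any monomial with zero coefficient can simply be dropped from $S$), and $x^\alpha=\prod_{i=1}^n x_i^{\alpha_i}$. Fix $X=(X_1,\dots,X_n)\in\mbZ^n$. Substituting $x_i=e^{CX_i}$ turns each monomial into $c_\alpha e^{C\langle\alpha,X\rangle}$, where $\langle\alpha,X\rangle=\sum_i \alpha_i X_i$. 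By definition of the tropicalization of a Laurent polynomial in a positive coordinate system, $F^t(X)=\max_{\alpha\in S}\langle\alpha,X\rangle$; denote this maximum by $M=M(X)$ and let $S_0=\{\alpha\in S:\langle\alpha,X\rangle=M\}$ be the (nonempty) set of maximizers.

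First I would bound $F(e^{CX_1},\dots,e^{CX_n})$ from below and above. For the lower bound, keeping only one maximizing monomial gives $F(e^{CX})\geq c_{\alpha_0}e^{CM}\geq e^{CM}$ for any $\alpha_0\in S_0$, since each $c_\alpha\geq 1$. For the upper bound, let $C_{\max}=\sum_{\alpha\in S}c_\alpha$ and note $F(e^{CX})\leq C_{\max}\,e^{CM}$ whenever $C>0$, because each term satisfies $c_\alpha e^{C\langle\alpha,X\rangle}\leq c_\alpha e^{CM}$. Taking logarithms and dividing by $C>0$ yields
\begin{align}
  M \;\leq\; \frac{\log F(e^{CX_1},\dots,e^{CX_n})}{C} \;\leq\; M + \frac{\log C_{\max}}{C}.
\end{align}
Letting $C\to\infty$, the right-hand error term $\log C_{\max}/C$ tends to $0$, so the squeeze theorem gives
\begin{align}
  \lim_{C\to\infty}\frac{\log F(e^{CX_1},\dots,e^{CX_n})}{C} \;=\; M \;=\; F^t(X_1,\dots,X_n),
\end{align}
which is the claimed identity.

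There is essentially no serious obstacle here; the statement is a standard "tropicalization as a $\log$–limit" fact, and the only points that need care are bookkeeping ones. The mild subtlety is making sure the definition of $F^t$ used in the statement really agrees with $\max_{\alpha\in S}\langle\alpha,X\rangle$ — this is precisely the definition of the tropicalization of a positive Laurent polynomial obtained by replacing $+$ with $\max$ and $\times$ with $+$ in the chosen positive coordinate system, as recalled just before the proposition (and consistent with the semifield $\mbZ^t$ of \Cref{ex: semifield}(1)). A second point to mention is independence of the coordinate chart: since the transition maps of a positive space are subtraction-free, $F$ is a positive Laurent polynomial in every chart of $\mcC_{\mcA}$, so the argument above applies verbatim in each chart and the limit is well defined on $\mcA(\mbZ^t)$. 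One could also note that the same computation works with $X_i\in\mbR$ rather than $\mbZ$, since nothing in the estimate used integrality; the integrality is only relevant for identifying the value with a point of $\mcA(\mbZ^t)$.
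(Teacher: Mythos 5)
Your proof is correct and follows essentially the same route as the paper's: the paper simply notes that $\lim_{C\to\infty}\frac{1}{C}\log(e^{CX_1}+\dots+e^{CX_n})=\max(X_1,\dots,X_n)$ and that tropicalization exchanges multiplication/addition with addition/maximum, which is exactly the monomial-expansion and squeeze argument you spell out in detail. Your version just makes the sandwich bounds explicit; no substantive difference.
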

\begin{proof}
	We only need to note that 
	\begin{align}
		\lim_{C\rightarrow \infty}\dfrac{\log (e^{CX_1}+\dots+e^{CX_n})}{C}=\max(X_1,\dots,X_n).	\end{align}
	Since the tropicalization respectively transfers the ordinary multiplication and addition to the addition and maximum. Hence, this implies that the proposition holds.
\end{proof} 
Here, we also call this process the \emph{Fock-Goncharov tropicalization}. However, for our purpose,  we need to introduce a novel notion based on this, which is called \emph{deformed Fock-Goncharov tropicalization}. That is to say, given a positive integral Laurent polynomial $F(X_1,\dots,X_n)$, we firstly get $F^t(X_1,\dots,X_n)\in \mcA(\mbZ^t)$ under the Fock-Goncharov tropicalization. Secondly, we replace all the $X_i$ in $F^t(X_1,\dots,X_n)$ by the $\mbZ$-valued variables $x_i$, such that 
\begin{enumerate}[leftmargin=2em]
	\item If the positive integral Laurent polynomial satisfies  $F(X_1,\dots,X_n)=k$ for some $k\in \mbN$, then $F^t(x_1,\dots,x_t)=0$. 
	\item If $X_i=\max(X_1,\dots,X_n)$, then $x_i=\max(x_1,\dots,x_n)$. If $X_i\neq \max(X_1,\dots,X_n)$, then $x_i\neq \max(x_1,\dots,x_n)$.
\end{enumerate}
Now, we can tropicalize the generalized Markov tree. We may prove that it  essentially has a classical Euclid tree structure under the deformed Fock-Goncharov tropicalization.
\begin{theorem}\label{thm: FG trop}
	The deformed Fock-Goncharov tropicalization of the generalized Markov tree is the classical Euclid tree.
\end{theorem}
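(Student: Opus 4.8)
The plan is to realize the deformed Fock--Goncharov tropicalization as the canonical tree isomorphism between the two objects and to verify it edge by edge. Both the generalized Markov tree (via \Cref{thm: generate} together with the tree structure recorded in the text) and the classical Euclid tree $\mcE$ with initial triple $(1,1,1)$ are $3$-regular trees whose vertices are indexed by finite reduced sequences $\mfw=[w_1,\dots,w_n]\in\mcT$; write $T_\mfw=\mu^\mfw(1,1,1)$ and $E_\mfw=\mcM^\mfw(1,1,1)$. I would argue by induction on $n=|\mfw|$ that the deformed Fock--Goncharov tropicalization sends $T_\mfw$ to $E_\mfw$, and the edge labelled $i$ at $T_\mfw$ to the edge labelled $i$ at $E_\mfw$. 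The base case $n=0$ is the normalization that the singular triple $(1,1,1)$ of the generalized Markov equation (the unique symmetric solution, cf.\ \Cref{lem: monotonicity}(1)) is sent to the singular initial triple $(1,1,1)$ of $\mcE$, which likewise fails its own additive ``$\max=$ sum of the others'' relation.

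For the inductive step the crucial combinatorial input is \Cref{lem: monotonicity}: by it and an easy induction on $n$, for $n\geq1$ the triple $T_\mfw$ has a \emph{unique} maximal component, sitting in position $w_n$ (for $n=1$ this is checked directly, since $\mu_i(1,1,1)$ has its largest entry $\lambda_i+2$ in position $i$). Hence in passing from $\mfw$ to $\mfw[w_{n+1}]$ with $w_{n+1}\neq w_n$, the mutation $\mu_{w_{n+1}}$ acts on a \emph{non-maximal} position: writing $\{j,\ell\}=\{1,2,3\}\setminus\{w_{n+1}\}$, it replaces $x_{w_{n+1}}$ by $x_{w_{n+1}}^{-1}(x_j^2+\lambda_{w_{n+1}}x_jx_\ell+x_\ell^2)$, whose Fock--Goncharov tropicalization is $X_{w_{n+1}}\mapsto 2\max(X_j,X_\ell)-X_{w_{n+1}}$. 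Since $w_n\in\{j,\ell\}$ is the maximal index of $T_\mfw$, this equals $2X_{w_n}-X_{w_{n+1}}$, and from $X_{w_n}\geq X_{w_{n+1}},X_j,X_\ell$ one checks it is $\geq$ each of $X_j,X_\ell$, so position $w_{n+1}$ becomes the new (strict, once $n\geq1$) tropical maximum --- exactly what deformation rule (2) demands. Meanwhile deformation rule (1), applied to the structural identity $(X_1^2+X_2^2+X_3^2+\lambda_3X_1X_2+\lambda_1X_2X_3+\lambda_2X_3X_1)(X_1X_2X_3)^{-1}=3+\lambda_1+\lambda_2+\lambda_3$, forces the image to satisfy $\max(2x_1,2x_2,2x_3)=x_1+x_2+x_3$, i.e.\ the maximal component of the new triple equals the sum of the other two. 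Combining the last two points, the new value in position $w_{n+1}$ equals (old value at $j$) $+$ (old value at $\ell$), which is precisely the recursion $\mcM_{w_{n+1}}$ of \eqref{eq: classical}; by the induction hypothesis this yields $\mcM_{w_{n+1}}(E_\mfw)=E_{\mfw[w_{n+1}]}$, closing the induction. The hypotheses feeding the deformation persist, since every $E_\mfw$ with $|\mfw|\geq1$ automatically satisfies ``$\max=$ sum of the others'' by the definition of $\mcM_i$.

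The main obstacle, and the reason the undeformed Fock--Goncharov tropicalization alone does not suffice, is that the latter is far too lossy near the root: it sends $(1,1,1),\mu_i(1,1,1),\dots$ to tropical points whose coordinates do not reflect the additive Euclid growth (e.g.\ $(1,1,1)$ and $(\lambda_1+2,1,1)$ have the same logarithmic profile up to $o(1)$). What makes the \emph{deformed} version behave is exactly the combinatorial fact extracted from \Cref{lem: monotonicity} and reducedness: the maximal position of $T_\mfw$ is always the last-mutated index $w_n$. This is what prevents us from ever tropicalizing $\mu_i$ in the maximal direction --- a case in which the sign behaviour of $\mcM_i$ would be \emph{incompatible} with deformation rule (2) --- and it is what allows rules (1) and (2) to determine the image uniquely at each step. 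The one remaining delicate point is the very first mutation, where $(1,1,1)$ has no unique maximum; there one checks by hand that $\mu_i(1,1,1)$ and $\mcM_i(1,1,1)$ both have their maximum in position $i$, so rule (2) is respected and rule (1) then pins the image to $\mcM_i(1,1,1)$.
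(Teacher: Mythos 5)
Your proposal is correct and follows essentially the same route as the paper's proof: tropicalize the exchange relations via \Cref{prop: trop}, use \Cref{lem: monotonicity} (plus reducedness) to ensure each mutation is applied at a non-maximal position, and then invoke the tropicalized equation $\max(2x_1,2x_2,2x_3)=x_1+x_2+x_3$ to turn $\max(2x_j,2x_\ell)-x_{w_{n+1}}$ into $x_j+x_\ell$, recovering the Euclid rules \eqref{eq: classical}. The explicit induction on $|\mfw|$ and the edge-by-edge bookkeeping are just a more structured packaging of the same argument, so no substantive difference.
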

\begin{proof}
	Recall that the generalized Markov equation is 
	\begin{align}
		X_1^2 + X_2^2 + X_3^2 + \lambda_3 X_1 X_2 + \lambda_1 X_2 X_3 + \lambda_2 X_3 X_1 = (3+ \lambda_1 + \lambda_2 + \lambda_3) X_1 X_2 X_3. \label{eq: Markov 2}
	\end{align} Then, by a direct calculation, we can get the  \emph{deformed Fock-Goncharov tropicalized} generalized Markov equation equation as follows:
	\begin{align}
\max(2x_1,2x_2,2x_3)=x_1+x_2+x_3.\label{eq: trop markov}
	\end{align} Note that the mutation rules \eqref{mutation rule} of the generalized Markov cluster algebras are Laurent polynomials with positive integer coefficients. Hence, by \Cref{prop: trop}, the tropicalized mutation rules are given by 
	\begin{align}
		\begin{array}{cc}
    \mu_1^t(x_1, x_2, x_3) &= (\max(2x_2,2x_3)-x_1, x_2, x_3)\\
    \mu_2^t(x_1, x_2, x_3) &= (x_1, \max(2x_1,2x_3)-x_2, x_3)\\
    \mu_3^t(x_1, x_2, x_3) &= (x_1, x_2, \max(2x_1,2x_2)-x_3)
\end{array}.
	\end{align} If $(X_1,X_2,X_3)=(1,1,1)$, we can technically take $(x_1,x_2,x_3)=(0,0,0)$, which is also said to be \emph{singular}. For convenience, we assume that $(X_1,X_2,X_3)\neq (1,1,1)$ is any  positive integer solution to \eqref{eq: Markov 2}.  According to \Cref{thm: generate}, suppose that one of the next two solutions is $(X_1^{\prime},X_2,X_3)=\mu_1(X_1,X_2,X_3)$, that is to say $X_1^{\prime}=\max(X_1^{\prime},X_2,X_3)$. Then, by \Cref{lem: monotonicity}, we have $X_1\neq \max(X_1,X_2,X_3)$. Hence, it implies that its deformed Fock-Goncharov tropicalization satisfies $x_1\neq \max(x_1,x_2,x_3)$, and we have 
	\begin{align}
		\max(2x_2,2x_3)-x_1=\max(2x_1,2x_2,2x_3)-x_1=x_2+x_3.
	\end{align} Thus, this implies that $\mu_1^t(x_1,x_2,x_3)=(\max(2x_2,2x_3)-x_1, x_2, x_3)=(x_2+x_3,x_2,x_3)$. Similarly, if $x_2\neq \max(x_1,x_2,x_3)$ and $x_3\neq \max(x_1,x_2,x_3)$, then we have $\mu_2^t(x_1,x_2,x_3)=(x_1,x_1+x_3,x_3)$ and $\mu_3^t(x_1,x_2,x_3)=(x_1,x_2,x_1+x_2)$. Therefore, these tropicalized mutation rules exactly correspond to the mutation rules \eqref{eq: classical} of the classical Euclid tree.
\end{proof}
\begin{example}\label{ex: deform}
	Consider the well-known Markov equation for $\lambda_1=\lambda_2=\lambda_3=0$, that is \begin{align}X_1^2+X_2^2+X_3^2=3X_1X_2X_3.\label{eq: Markov}
	\end{align} Then, its deformed Fock-Goncharov tropicalization is also given by 
	\begin{align}
		\max(2x_1,2x_2,2x_3)=x_1+x_2+x_3.
	\end{align} Take the positive integer solutions $(X_1,X_2,X_3)=(2,1,1)$ and $(x_1,x_2,x_3)=(4,2,2)$. Note that the choice of $(x_1,x_2,x_3)$ is not unique. (We can also choose $(5,3,2)$.) It holds that $X_1=\max(X_1,X_2,X_3)$ and $x_1=\max(x_1,x_2,x_3)$. Take the mutation sequence $\mfw=[2,3,1]$. Then, we compare two sequences as follows:
	\begin{align}
		\begin{array}{ll}
		\mathrm{Markov:}\ 	(2,1,1)\stackrel{\mu_2}{\longrightarrow} (2,5,1)\stackrel{\mu_3}{\longrightarrow} (2,5,29) \stackrel{\mu_1}{\longrightarrow} (433,5,29)\\
		 \mathrm{Tropicalization:}\ (4,2,2)\stackrel{\mu_2^t}{\longrightarrow} (4,6,2)\stackrel{\mu_3^t}{\longrightarrow} (4,6,10) \stackrel{\mu_1^t}{\longrightarrow} (16,6,10)	
		\end{array} .
	\end{align} It can be seen directly that the positions of the maximum components are in one-to-one correspondence. Moreover, the tropicalized chain shapes the same as the classical Euclid tree. 
\end{example}
\begin{remark}
	Although there is a good generating relation between the deformed Fock-Goncharov tropicalized generalized Markov tree and the classical Euclid tree, more concrete relation between the values of generalized Markov tree and the classical Euclid tree is still mysterious. Hence, a natural question arises as follows.
\end{remark}
\begin{question} 
Is there a quantitative relationship between the generalized Markov tree and the classical Euclid tree? In the next several sections, we aim to solve this question and show their explicit relation. 
\end{question}
\section{Comparison between generalized Euclid tree and classical Euclid tree}\label{comparison}

In this section, we will compare the $k$-generalized Euclid tree $\mcK$ and the classical Euclid tree $\mcE$. We show that they are essentially same up to a scalar multiple at infinity.

\begin{definition}[\emph{Comparison triple}]\label{def: Comparison triple}
	Under the bijection $\Phi$ between the triples $(x_i,y_i,z_i)\in \mcE$ and $(X_i,Y_i,Z_i)\in \mcK$, the \emph{comparison triple} is defined by 
	\begin{align}
		(l_i, m_i, n_i) \coloneq (\frac{X_i}{x_i},\frac{Y_i}{y_i},\frac{Z_i}{z_i}).
	\end{align}
\end{definition}
Hence, the mutation of the comparison triples is induced by the mutation of $k$-generalized Euclid tree and classical Euclid tree. It is complicated but has many interesting properties.

Now, consider any simple mutation chain in the $k$-generalized Euclid tree $\mcK$. For example, if we take \begin{align}(X_0,Y_0,Z_0)\xrightarrow{\mcM_{2;k}} (X_1,Y_1,Z_1),\end{align}
then the corresponding simple mutation chain in classical Euclid tree $\mcE$ is denoted by \begin{align}(x_0,y_0,z_0)\xrightarrow{\mcM_2} (x_1,y_1,z_1).\end{align}
Hence, we have $X_1=X_0$, $Y_1= X_0+ Z_0+ k$, $Z_1=Z_0$, and $x_1=x_0$, $y_1 = x_0 + z_0$, $z_1=z_0$. It implies that $l_1= l_0 = \frac{X_1}{x_1}$, $n_1 = n_0 = \frac{Z_1}{z_1}$, and the number $m_1$ can be expressed by 
\begin{align}
\begin{array}{ll}
  m_1 &= \frac{Y_1}{y_1} = \frac{X_0+ Z_0+ k}{x_0 + z_0} = \frac{X_0}{x_0 + z_0} + \frac{Z_0}{x_0 + z_0} + \frac{k}{x_0 + z_0}\\ &= \frac{x_0}{x_0 + z_0}\times l_0 + \frac{z_0}{x_0 + z_0}\times n_0 + \frac{k}{x_0 + z_0} \\
  &= \frac{x_0}{x_0 + z_0}\times (l_0 + \frac{k}{x_0 + z_0}) + \frac{z_0}{x_0 + z_0}\times (n_0 + \frac{k}{x_0 + z_0}).
  \end{array}
\end{align}
Thus, the corresponding simple mutation chain of the comparison triple can be written as $(l_0, m_0, n_0) \xrightarrow{\delta_2} (l_1, m_1, n_1)$, where $l_1=l_0$, $n_1=n_0$ and $m_1 = \frac{x_0}{x_0 + z_0}\times (l_0 + \frac{k}{x_0 + z_0}) + \frac{z_0}{x_0 + z_0}\times (n_0 + \frac{k}{x_0 + z_0})$.
Note that we can similarly calculate the other two mutations $\delta_1$, $\delta_3$ and respectively get the changed elements in the comparison triples as follows:
\begin{align}
	\begin{array}{ll}
		l_1=\frac{y_0}{y_0 + z_0}\times (m_0 + \frac{k}{y_0 + z_0}) + \frac{z_0}{y_0 + z_0}\times (n_0 + \frac{k}{y_0 + z_0}),\\
		n_1=\frac{x_0}{x_0 + y_0}\times (l_0 + \frac{k}{x_0 + y_0}) + \frac{y_0}{x_0 + y_0}\times (m_0 + \frac{k}{x_0 + y_0}).
	\end{array}
\end{align}
More precisely, we can illustrate the mutation of comparison triple $\delta_2$ (or $\delta_1,\delta_3$) as the following \Cref{cluster real}, where $\Delta = \frac{k}{x_0 + z_0}$. Without loss of generality, we may assume that $l_0<n_0$.

\begin{figure}[htbp]
\begin{flushleft}
\tikzset{every picture/.style={line width=0.75pt}} 
\begin{tikzpicture}[x=0.75pt,y=0.75pt,yscale=-1,xscale=1]
\path (0,100); 

\draw    (170,79.83) -- (439.67,80.5) ;
\draw  [color={rgb, 255:red, 208; green, 2; blue, 27 }  ,draw opacity=1 ][fill={rgb, 255:red, 208; green, 2; blue, 27 }  ,fill opacity=1 ] (227.33,80) .. controls (227.33,78.53) and (228.53,77.33) .. (230,77.33) .. controls (231.47,77.33) and (232.67,78.53) .. (232.67,80) .. controls (232.67,81.47) and (231.47,82.67) .. (230,82.67) .. controls (228.53,82.67) and (227.33,81.47) .. (227.33,80) -- cycle ;
\draw  [color={rgb, 255:red, 208; green, 2; blue, 27 }  ,draw opacity=1 ][fill={rgb, 255:red, 208; green, 2; blue, 27 }  ,fill opacity=1 ] (377.33,80) .. controls (377.33,78.53) and (378.53,77.33) .. (380,77.33) .. controls (381.47,77.33) and (382.67,78.53) .. (382.67,80) .. controls (382.67,81.47) and (381.47,82.67) .. (380,82.67) .. controls (378.53,82.67) and (377.33,81.47) .. (377.33,80) -- cycle ;
\draw    (170,165.5) -- (439.67,166.17) ;
\draw  [color={rgb, 255:red, 208; green, 2; blue, 27 }  ,draw opacity=1 ][fill={rgb, 255:red, 208; green, 2; blue, 27 }  ,fill opacity=1 ] (247.67,165.67) .. controls (247.67,164.19) and (248.86,163) .. (250.33,163) .. controls (251.81,163) and (253,164.19) .. (253,165.67) .. controls (253,167.14) and (251.81,168.33) .. (250.33,168.33) .. controls (248.86,168.33) and (247.67,167.14) .. (247.67,165.67) -- cycle ;
\draw  [color={rgb, 255:red, 208; green, 2; blue, 27 }  ,draw opacity=1 ][fill={rgb, 255:red, 208; green, 2; blue, 27 }  ,fill opacity=1 ] (397.67,166) .. controls (397.67,164.53) and (398.86,163.33) .. (400.33,163.33) .. controls (401.81,163.33) and (403,164.53) .. (403,166) .. controls (403,167.47) and (401.81,168.67) .. (400.33,168.67) .. controls (398.86,168.67) and (397.67,167.47) .. (397.67,166) -- cycle ;
\draw  [dash pattern={on 0.84pt off 2.51pt}]  (230,80) -- (229.83,165.58) ;
\draw  [dash pattern={on 0.84pt off 2.51pt}]  (380,80) -- (379.83,165.58) ;
\draw  [color={rgb, 255:red, 189; green, 16; blue, 224 }  ,draw opacity=1 ][fill={rgb, 255:red, 189; green, 16; blue, 224 }  ,fill opacity=1 ] (337.33,165.67) .. controls (337.33,164.19) and (338.53,163) .. (340,163) .. controls (341.47,163) and (342.67,164.19) .. (342.67,165.67) .. controls (342.67,167.14) and (341.47,168.33) .. (340,168.33) .. controls (338.53,168.33) and (337.33,167.14) .. (337.33,165.67) -- cycle ;
\draw    (301.64,104.7) -- (301.78,125.07)(298.64,104.72) -- (298.78,125.09) ;
\draw [shift={(300.33,132.08)}, rotate = 269.6] [color={rgb, 255:red, 0; green, 0; blue, 0 }  ][line width=0.75]    (10.93,-4.9) .. controls (6.95,-2.3) and (3.31,-0.67) .. (0,0) .. controls (3.31,0.67) and (6.95,2.3) .. (10.93,4.9)   ;
\draw    (250.33,165.67) .. controls (285,150.43) and (299.86,151.57) .. (340,165.67) ;
\draw    (340,165.67) .. controls (372.43,149.57) and (374.14,155.29) .. (400.33,166) ;
\draw [line width=2.25]    (229.9,162.74) -- (229.83,168.88) ;
\draw [line width=2.25]    (380.27,163.28) -- (380.2,169.42) ;

\draw (223,172) node [anchor=north west][inner sep=0.75pt]   [align=left] {$l_1$};
\draw (374,172) node [anchor=north west][inner sep=0.75pt]   [align=left] {$n_1$};
\draw (223,60) node [anchor=north west][inner sep=0.75pt]   [align=left] {$l_0$};
\draw (374,63) node [anchor=north west][inner sep=0.75pt]   [align=left] {$n_0$};
\draw (240,140) node [anchor=north west][inner sep=0.75pt]   [align=left] {$l_1+ \Delta$};
\draw (390,140) node [anchor=north west][inner sep=0.75pt]   [align=left] {$n_1 + \Delta$};
\draw (330.5,145) node [anchor=north west][inner sep=0.75pt]   [align=left] {$m_1$};
\draw (305.71,107.57) node [anchor=north west][inner sep=0.75pt]   [align=left] {$\delta_2$};

\end{tikzpicture}
\end{flushleft}
\caption{Mutation of comparision triple at $\delta_2$}
\label{cluster real}
\end{figure}
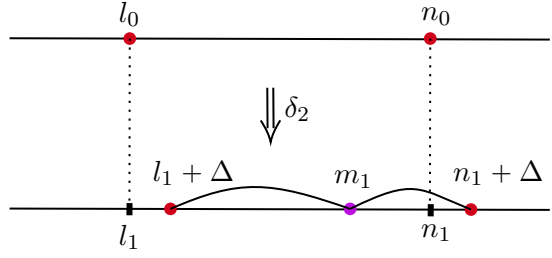
As the above figure shows, if  the length of the mutation sequence is large enough, that is $x_0$ and $z_0$ are sufficiently large, then we can ignore the term $\Delta=\frac{k}{x_0 + z_0}$. Then, the mutation $\delta_2$ approximately becomes $(l_0, m_0, n_0) \xrightarrow{\delta_2} (l_1, m_1, n_1)$, 
where $l_1=l_0$, $n_1=n_0$ and \begin{align}m_1 = \frac{x_0}{x_0 + z_0}\times l_0 + \frac{z_0}{x_0 + z_0}\times n_0.\end{align}
Therefore, approximately, $m_1$ is the internal division point of the other two unmutated points $l_1, n_1$.
This observation leads to several propositions about the asymptotic phenomenon of the $k$-generalized Euclid tree, which we will present in the next section.

Beforehand, we need a preliminary lemma as follows.
\begin{lemma}\label{lem: internal condition} Let $(l_j,m_j,n_j)$ be a comparison triple associated with $(x_j,y_j,z_j)\in \mcE$ and $(X_j,Y_j,Z_j)\in \mcK$. Then, the following statements hold:
	\begin{enumerate}
    \item If $m_j < n_j$ and we mutate at $\delta_1$, then $l_{j+1} \in [m_j,n_j] \Longleftrightarrow k \leq y_j(n_j-m_j)$.
    \item If $n_j < m_j$ and we mutate at $\delta_1$, then $l_{j+1} \in [n_j,m_j] \Longleftrightarrow k \leq z_j(m_j-n_j)$.
    \item If $l_j < n_j$ and we mutate at $\delta_2$, then $m_{j+1} \in [l_j,n_j] \Longleftrightarrow k \leq x_j(n_j-l_j)$.
    \item If $n_j < l_j$ and we mutate at $\delta_2$, then $m_{j+1} \in [n_j,l_j] \Longleftrightarrow k \leq z_j(l_j-n_j)$.
    \item If $l_j < m_j$ and we mutate at $\delta_3$, then $n_{j+1} \in [l_j,m_j] \Longleftrightarrow k \leq x_j(m_j-l_j)$.
    \item If $m_j < l_j$ and we mutate at $\delta_3$, then $n_{j+1} \in [m_j,l_j] \Longleftrightarrow k \leq y_j(l_j-m_j)$.
  \end{enumerate}
 Moreover, if the inequality for $k$ is strict, then the mutated point will lie in the corresponding open interval.
\end{lemma}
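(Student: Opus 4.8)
The plan is to handle all six cases at once by observing that each $\delta_i$-mutation writes the mutated component of the comparison triple as a single fraction of the form $\frac{\alpha p + \beta q + k}{\alpha + \beta}$, where $p,q$ are the two \emph{unmutated} components and $\alpha,\beta$ are the two corresponding entries of $(x_j,y_j,z_j)$ (so $\alpha,\beta>0$ since $\mcE$ consists of triples in $\mbR_+^3$). Indeed, the computation carried out just before the statement shows that $\delta_2$ sends $(l_j,m_j,n_j)$ to $(l_j,\,m_{j+1},\,n_j)$ with
\begin{align*}
m_{j+1} \;=\; \frac{x_j l_j + z_j n_j + k}{x_j + z_j},
\end{align*}
and the analogous formulas for $\delta_1$ and $\delta_3$ read $l_{j+1} = \frac{y_j m_j + z_j n_j + k}{y_j + z_j}$ and $n_{j+1} = \frac{x_j l_j + y_j m_j + k}{x_j + y_j}$. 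I would first record these three identities (the excerpt already derives the $\delta_2$ case and states the changed components for $\delta_1,\delta_3$).

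The heart of the argument is then a one-line convexity remark. Suppose $p<q$. Since $\alpha,\beta>0$ and $k\ge 0$, we have $\alpha p + \beta q + k > \alpha p + \beta p = (\alpha+\beta)p$, so the mutated component is \emph{always} strictly greater than $p=\min(p,q)$; intuitively, the correction term $\frac{k}{\alpha+\beta}\ge 0$ only pushes the convex combination $\frac{\alpha p+\beta q}{\alpha+\beta}\in(p,q)$ \emph{toward} $\max(p,q)$. On the other hand, $\frac{\alpha p + \beta q + k}{\alpha + \beta}\le q$ is equivalent, after multiplying through by $\alpha+\beta>0$, to $k\le\alpha(q-p)$, and a strict inequality here is equivalent to the mutated component lying strictly below $q$. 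Hence the mutated component lies in $[\min(p,q),\max(p,q)]$ if and only if $k\le\alpha\cdot\bigl(\max(p,q)-\min(p,q)\bigr)$, where $\alpha$ is the entry of $(x_j,y_j,z_j)$ attached to the \emph{smaller} of the two unmutated components; and the open-interval version holds under the corresponding strict inequality.

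Finally I would specialize: for $\delta_1$ the unmutated pair is $\{m_j,n_j\}$ with weights $\{y_j,z_j\}$, giving cases (1),(2) according to whether $m_j<n_j$ or $n_j<m_j$; for $\delta_2$ the pair is $\{l_j,n_j\}$ with weights $\{x_j,z_j\}$, giving (3),(4); for $\delta_3$ the pair is $\{l_j,m_j\}$ with weights $\{x_j,y_j\}$, giving (5),(6). I do not expect any real obstacle here: the only points needing care are (i) checking that the correction term always shifts the convex combination toward $\max(p,q)$, so that interval membership is governed solely by the upper endpoint, and (ii) correctly matching, in each of the six cases, which of $x_j,y_j,z_j$ multiplies the smaller unmutated value — that is the coefficient appearing in the stated bound. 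Positivity of $x_j,y_j,z_j$ (hence of all denominators) is what legitimizes the convex-combination interpretation and must be invoked explicitly.
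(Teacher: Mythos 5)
Your proposal is correct and essentially the same as the paper's argument: both start from the explicit formulas $l_{j+1}=\frac{y_j m_j + z_j n_j + k}{y_j+z_j}$, $m_{j+1}=\frac{x_j l_j + z_j n_j + k}{x_j+z_j}$, $n_{j+1}=\frac{x_j l_j + y_j m_j + k}{x_j+y_j}$ and reduce each case to an elementary sign check, with the correct matching of weights to the smaller unmutated component in all six cases. The only difference is presentational: the paper tests the sign of the product $\bigl[z_j(n_j-m_j)+k\bigr]\bigl[y_j(m_j-n_j)+k\bigr]$ (one factor of which is automatically positive, exactly your "lower endpoint is automatic" remark), whereas you isolate the single inequality $k\le \alpha(q-p)$ directly.
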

\begin{proof}
	 Consider any mutation in $\mcK$: $(X_j,Y_j,Z_j)\xrightarrow{\mcM_{i;k}} (X_{j+1},Y_{j+1},Z_{j+1})$, and the corresponding mutation in $\mcE$: $(x_j,y_j,z_j)\xrightarrow{\mcM_i} (x_{j+1},y_{j+1},z_{j+1})$.
  The mutation of comparison triple  $(l_j, m_j, n_j) \xrightarrow{\delta_i} (l_{j+1}, m_{j+1}, n_{j+1})$ for any $i=1,2,3$ has the form 
 \begin{align} 
 \left\{
    \begin{array}{ccc}
    \delta_1(l_j,m_j,n_j) &=& (\frac{y_j}{y_j + z_j}\times m_j + \frac{z_j}{y_j + z_j}\times n_j + \frac{k}{y_j + z_j}, m_j, n_j) \\
    \delta_2(l_j,m_j,n_j) &=& (l_j, \frac{x_j}{x_j + z_j}\times l_j + \frac{z_j}{x_j + z_j}\times n_j + \frac{k}{x_j + z_j}, n_j) \\
    \delta_3(l_j,m_j,n_j) &=& (l_j, m_j, \frac{x_j}{x_j + y_j}\times l_j + \frac{y_j}{x_j + y_j}\times m_j + \frac{k}{x_j + y_j})    \end{array}. \right.
   \end{align}
  In the case of $i=1$, we have 
  \begin{align}
  \begin{array}{ll}
     [(\frac{y_j}{y_j + z_j}\times m_j + \frac{z_j}{y_j + z_j}\times n_j + \frac{k}{y_j + z_j}) - m_j]\times [(\frac{y_j}{y_j + z_j}\times m_j + \frac{z_j}{y_j + z_j}\times n_j + \frac{k}{y_j + z_j}) - n_j] \\
     = [\frac{z_j}{y_j + z_j}\times (n_j-m_j) + \frac{k}{y_j + z_j}] \times [\frac{y_j}{y_j + z_j}\times (m_j-n_j) + \frac{k}{y_j + z_j}].
    \end{array}\label{eq: internal}
  \end{align}
 Therefore, to decide whether the mutated number $l_{j+1}$ is the internal division point of the other two  unmutated points $m_{j+1}=m_j$ and $n_{j+1}=n_j$, we need to check whether the above product \eqref{eq: internal} is positive or negative.
  This is equivalent to check the product 
 \begin{align}
    [z_j\times (n_j-m_j) + k] \times [y_j\times (m_j-n_j) + k]
 \end{align}
  is positive or negative. Hence, if $m_j<n_j$, then we have \begin{align}l_{j+1} \in [m_j,n_j] \Longleftrightarrow k \leq y_j(n_j-m_j).\end{align} Similarly, we can check the case $n_j<m_j$ and the other two cases of $i=2$ and $i=3$. Hence, we can list all the conditions which allow the mutated one to be the internal division point of the other two unmutated points. Moreover, it is direct that if the equality is excluded, then the mutated point will lie in the open interval.
\end{proof}
\section{Asymptotic phenomenon of the generalized Euclid tree}\label{asym of GET}

In this section, we aim to exhibit the asymptotic phenomenon between the $k$-generalized Euclid tree $\mcK$ and the classical Euclid tree $\mcE$ with the help of Fibonacci sequence.
\begin{definition}[\emph{$3$-cyclic sequence}] \label{def: cyclic seq}
	Let $\mfw=[w_1,w_2,w_3,\dots]$ be an infinite reduced mutation sequence, where the numbers $1,2,3$ appear alternately and any set of three consecutive numbers is $\{1,2,3\}$. Then, $\mfw$ is called a \emph{$3$-cyclic sequence}. \end{definition}
	There are $6$ possible $3$-cyclic sequences which correspond to elements in the symmetry group $\mathfrak{S}_3$. For example, $\mfw_1=[1,2,3,1,2,3,\dots]$ and $\mfw_2=[3,1,2,3,1,2,\dots]$ are both $3$-cyclic sequences. Denote by $(x_i,y_i,z_i)$ the classical Euclid triples along any reduced mutation sequence $\mfw$. Then, we have the following lemma.
\begin{lemma}\label{lem:fibonacci-estimate}
Let $\mfw=[w_1,\dots,w_n,\dots]$ be an infinite mutation sequence with a $3$-cyclic subsequence indexed by $s_i\ (i=1,2,\dots)$.
  If we take the corresponding mutated components of the classical Euclid triples, then they are  bounded below by the Fibonacci sequence $F_i\ (i=1,2,\dots)$. For example, if the mutated elements are  $x_{s_1}, y_{s_2}, z_{s_3}, x_{s_4},y_{s_5}, \cdots $ , then we have
  \begin{align}
    F_1 < x_{s_1},\ F_2 < y_{s_2},\ F_3 < z_{s_3},\ F_4 < x_{s_4},\ F_5 < y_{s_5},\ \cdots.
  \end{align}
\end{lemma}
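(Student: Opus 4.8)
The plan is to use the symmetry of the recursion \eqref{eq: classical} under permuting the three coordinates to reduce to the case in which the $3$-cyclic subsequence has directions $1,2,3,1,2,3,\dots$; then the mutated elements are $v_1=x_{s_1},\ v_2=y_{s_2},\ v_3=z_{s_3},\ v_4=x_{s_4},\dots$. Write $d_i\in\{1,2,3\}$ for the direction mutated at the $i$-th subsequence step $s_i$ and $T_n=(x_n,y_n,z_n)$ for the classical Euclid triple after the $n$-th step of $\mfw$, so that $v_i$ is the $d_i$-coordinate of $T_{s_i}$, and $d_{i+3}=d_i$, $\{d_{i-2},d_{i-1},d_i\}=\{1,2,3\}$. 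Since $F_i=F_{i-1}+F_{i-2}$, the bound $F_i<v_i$ will follow by induction on $i$ from two statements: (i) $v_1>1=F_1$ and $v_2>1=F_2$; and (ii) $v_i\geq v_{i-1}+v_{i-2}$ for all $i\geq 3$.

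The crucial preliminary is a \emph{monotonicity along mutation times}: if $t<t'$ are two consecutive steps of $\mfw$ at which a fixed coordinate, say the first, is mutated, then $x_{t'}\geq x_t$. This is a direct computation from \eqref{eq: classical}: on the steps $t,t+1,\dots,t'-1$ the first coordinate stays constant, equal to $x_t$, and (when $t'>t+1$, so that step $t'-1$ mutates the second or the third coordinate) one gets $x_{t'}=x_t+2z_{t'-2}$ or $x_{t'}=x_t+2y_{t'-2}$, while the degenerate case $t'=t+1$ gives $x_{t'}=x_t$. It follows that for a subsequence step $s_j$ and any later step $n$ such that no subsequence step in $(s_j,n]$ mutates direction $d_j$, the $d_j$-coordinate of $T_n$ is at least $v_j$: this coordinate may still be mutated at non-subsequence steps in the meantime, but each such mutation can only raise it. I would also observe that, since in the situations where the lemma is used the entries of $(a,b,c)$ are $\geq 1$, every entry of every $T_n$ is $\geq 1$, because each mutation replaces one entry by a sum of two entries $\geq 1$.

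Granting this, (ii) is immediate: at step $s_i$ the mutation in direction $d_i$ replaces the $d_i$-coordinate of $T_{s_i-1}$ by the sum of its other two coordinates, which sit in directions $d_{i-1}$ and $d_{i-2}$; the latest subsequence steps mutating those directions before $s_i$ are $s_{i-1}$ and $s_{i-2}$, where they took the values $v_{i-1}$ and $v_{i-2}$, so by the monotonicity statement the $d_{i-1}$- and $d_{i-2}$-coordinates of $T_{s_i-1}$ are $\geq v_{i-1}$ and $\geq v_{i-2}$, giving $v_i\geq v_{i-1}+v_{i-2}$. For (i), each of $v_1,v_2$ is a sum of two coordinates of an earlier triple, hence $\geq 2>1$. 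An induction on $i$ using $F_i=F_{i-1}+F_{i-2}$ now yields $v_i>F_i$ for all $i$, and undoing the reduction at the start finishes the proof.

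The main obstacle is the bookkeeping in the middle paragraph: one must make sure that the mutations occurring between two consecutive subsequence steps cannot shrink the two ``spectator'' coordinates below the thresholds $v_{i-1},v_{i-2}$ that control the Fibonacci comparison. This is precisely the content of the monotonicity-along-mutation-times statement; once it is available, the rest reduces to a routine Fibonacci induction, the only genuine hypothesis being that the classical Euclid triples involved have entries $\geq 1$.
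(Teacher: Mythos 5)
Your proposal is correct and follows essentially the same route the paper intends: the paper's proof is just the one-line claim that the bound follows inductively from the Fibonacci recursion and the mutation rules \eqref{eq: classical}, and your argument is exactly that induction, with the monotonicity-along-mutation-times observation supplying the bookkeeping the paper leaves implicit. Your explicit caveat that the entries must be at least $1$ (so that $v_1,v_2\geq 2$) is a genuine hypothesis the paper also uses tacitly, so flagging it is appropriate rather than a defect of your proof.
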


\begin{proof}
  Inductively, it can be proved directly by the definition of Fibonacci sequence and mutation rules \eqref{eq: classical} of the classical Euclid tree $\mcE$.
\end{proof}

\begin{proposition}\label{prop:comparison-bounded}
  Take any $k$-generalized Euclid tree $\mcK$ with the initial triple $(A,B,C)$ and the classical Euclid tree $\mcE$ with the initial triple $(a,b,c)$. Let $\mfw=[w_1,
  \dots,w_n,\dots]$ be an infinite reduced mutation sequence.
  Assume that $1,2,3$ all appear infinitely many times in $\mfw$.
  Then, the corresponding sequence $\{\max(l_j,m_j,n_j)\}_{j=1}^{+\infty}$ associated with the comparison triples is bounded above.
\end{proposition}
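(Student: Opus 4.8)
The engine of the proof is the explicit form, worked out just before \Cref{lem: internal condition}, of the mutation induced on the comparison triple: a mutation at index $i$ fixes the other two coordinates of $(l_j,m_j,n_j)$ and replaces the $i$-th one by a strict convex combination of the two surviving coordinates, each shifted by $\Delta_j=k/\sigma_j$, where $\sigma_j$ denotes the sum of the two \emph{unmutated} classical Euclid coordinates at step $j$ (equivalently, $\sigma_j$ is precisely the value taken by the mutated classical Euclid coordinate after the mutation). First I would record the crude bound
\[
\max(l_{j+1},m_{j+1},n_{j+1})\le \max(l_j,m_j,n_j)+\frac{k}{\sigma_j},
\]
which reduces the problem to controlling $\sum_j 1/\sigma_j$. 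This is, however, too lossy: when $\mfw$ contains arbitrarily long stretches using only two indices the series $\sum_j 1/\sigma_j$ can fail to converge, so the estimate must be sharpened using the internal structure of such stretches.

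The sharpening is the following. Consider a maximal block $B$ of consecutive steps using only two of the three indices, say freezing index $i_0$; throughout $B$ the classical coordinate of $i_0$ equals a constant $v^{*}$ and the comparison coordinate of $i_0$ equals a constant $c^{*}$. Rewriting $\Delta_j=k/\sigma_j$ as (the convex weight attached to the $i_0$-coordinate) times $k/v^{*}$, one sees that \emph{every} update inside $B$ is a convex combination of a previous comparison coordinate and the \emph{single fixed point} $N:=c^{*}+k/v^{*}$. Hence the two active comparison coordinates move monotonically toward $N$, and in particular each of them always stays between its value at the start of $B$ and $N$. Consequently
\[
\sup_{j\in B}\max(l_j,m_j,n_j)\ \le\ \max(l_{j_0},m_{j_0},n_{j_0})+\frac{k}{v^{*}},
\]
where $j_0$ is the first step of $B$ --- a \emph{single} term, no matter how long $B$ is. Together with the transition steps (where three consecutive indices are pairwise distinct), at which the crude bound $\max(l_{j+1},m_{j+1},n_{j+1})\le\max(l_j,m_j,n_j)+k/\sigma_j$ is applied directly, this shows that the total increase of $\max(l_j,m_j,n_j)$ from some finite step onward is at most $k\big(\sum_{B}1/v^{*}_B+\sum_{\mathrm{transitions}}1/\sigma_j\big)$.

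It remains to see that the denominators appearing here grow fast enough. Since $\sigma_{j+1}=\sigma_j+(\text{a positive classical coordinate})$, the values $\sigma_j$ are strictly increasing; moreover, at a transition step the newly created classical coordinate is a sum of two classical coordinates each of which has already reached at least the value created at the previous transition, so the distinguished classical values at least double between transitions. Alternatively --- and this is the route via the Fibonacci sequence --- one extracts a $3$-cyclic subsequence of $\mfw$ (available because $1,2,3$ all occur infinitely often) and applies \Cref{lem:fibonacci-estimate} to conclude that the $v^{*}_B$ and the transition values $\sigma_j$ dominate the Fibonacci numbers $F_1<F_2<\cdots$. By \Cref{prop: Fibonacci converge} the two series above are then majorized by $k\sum_{n}1/F_n<\infty$, and adding the finite value of $\max(l,m,n)$ at the start yields the desired uniform upper bound for $\{\max(l_j,m_j,n_j)\}_{j=1}^{+\infty}$.

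The main obstacle is not any single computation but the bookkeeping: one has to fix a clean decomposition of $\mfw$ into two-index blocks and transition steps (maximal two-index runs can share a boundary step, and the purely $3$-cyclic case, where there are no long blocks at all, must be handled on the same footing), and then verify the Fibonacci/geometric growth of the block denominators $v^{*}_B$. The conceptual heart, which is what rescues the argument from the genuine divergence of the naive series $\sum_j 1/\sigma_j$, is the observation that inside a two-index block the induced dynamics on the comparison triple is a contraction toward the fixed point $N$; getting this statement and its consequence --- that a whole block costs only $k/v^{*}_B$ --- exactly right is the crux.
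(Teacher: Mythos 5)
Your proposal is correct and follows the same global skeleton as the paper's proof: decompose $\mfw$ into stretches that use only two indices, show that each such stretch costs only a single increment whose denominator is a classical Euclid coordinate, and then sum these increments using the Fibonacci lower bound of \Cref{lem:fibonacci-estimate} together with the convergence of $\sum_n 1/F_n$ from \Cref{prop: Fibonacci converge}. The one genuine difference is the mechanism inside a block: the paper runs a case analysis through \Cref{lem: internal condition} (either the mutated comparison value is an internal division point, or the gap is smaller than $k/x$ and the overshoot is at most $2k/x$), which produces increments of the form $2k/x_0,\,2k/y_{s_2-1},\dots$; you instead observe the exact identity $\frac{v^*}{v^*+z}\cdot\frac{k}{v^*}=\frac{k}{v^*+z}$, so that every update in a block freezing the classical value $v^*$ and comparison value $c^*$ is a strict convex combination of the other active comparison coordinate and the fixed point $N=c^*+k/v^*$. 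This eliminates the dichotomy and gives the sharper per-block cost $k/v^*$ directly. One small imprecision: it is not true that each active coordinate "stays between its value at the start of $B$ and $N$" (the first update is a convex combination of $N$ and the \emph{other} starting value, so it can leave that interval); the correct and sufficient statement is that all values produced in the block remain in the convex hull of the two starting active values and $N$, hence never exceed $\max(l_{j_0},m_{j_0},n_{j_0})+k/v^*$, which is exactly the bound you use. Your "doubling between transitions" remark would need more care than you give it (maximal two-index runs can be very short and overlap), but since you fall back on the $3$-cyclic subsequence and \Cref{lem:fibonacci-estimate} --- the paper's own route --- the summability of the increments is justified and the argument closes.
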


\begin{proof} Suppose that $(X_0,Y_0,Z_0)=(A,B,C)$, $(x_0,y_0,z_0)=(a,b,c)$ and the corresponding comparison triple is $(l_0,m_0,n_0)$. Let $\mfw_j$ be the subsequence of $\mfw$ and denote by $(X_j,Y_j,Z_j)=\mcM^{\mfw_j}_k(A,B,C)$, $(x_j,y_j,z_j)=\mcM^{\mfw_j}(a,b,c)$.

   Recall that for any mutation in $\mcK$: $(X_j,Y_j,Z_j)\xrightarrow{\mcM_{w_{j+1};k}} (X_{j+1},Y_{j+1},Z_{j+1})$ and the corresponding mutation in $\mcE$: $(x_j,y_j,z_j)\xrightarrow{\mcM_{w_{j+1}}} (x_{j+1},y_{j+1},z_{j+1})$, 
  the mutation of the comparison triple is denoted by $(l_j, m_j, n_j) \xrightarrow{\delta_{w_{j+1}}} (l_{j+1}, m_{j+1}, n_{j+1})$, where $w_{j+1}\in \{1,2,3\}$. Now, we start to study each sequence of comparison numbers $\{l_j\}_j$, $\{m_j\}_j$, $\{n_j\}_j$.
Note that $1,2,3$ appear infinitely many times in $\mfw$. Hence, without loss of generality, we might assume that $\mfw=[3,(2,3,2,3,\dots),1,(3,1,3,1,\dots),2,(1,2,1,2,\dots),3,\dots]$, where there are only finitely many indices among the round brackets and we denote the indices outside the round brackets by $w_{s_1}=w_1=3$, $w_{s_2}=1$, $w_{s_3}=2$, $w_{s_4}=3$ and so on. Note that we can find a $3$-cyclic subsequence of $\mfw$ given by $[w_{s_1},w_{s_2},w_{s_3},w_{s_4},\dots]=[3,1,2,3,\dots]$.
 
\textbf{Step $1$}:
  If $m_0 - l_0 \geq \frac{k}{x_0}$, since the first mutation is $\delta_1$, by \Cref{lem: internal condition}, we have the mutated number $n_1 \in [l_0, m_0]$.  Otherwise, that is $m_0 - l_0 < \frac{k}{x_0}$, then we have \begin{align}n_1 < \max(l_0,m_0) + \dfrac{k}{z_1} < l_0 + \dfrac{k}{x_0} + \dfrac{k}{z_1}<l_0+\dfrac{2k}{x_0}.\end{align}
Furthermore, let us consider the finite repeating sequence $[2,3,2,3,\dots]$ after $n_{1}$. Note that $l_0$ is fixed since we pass the mutation $\delta_1$. If $n_1 - l_0 \geq  \frac{k}{x_0}$, then by \Cref{lem: internal condition}, the mutated number $m_2$ is again the internal division point of $l_0$ and $n_1$, which implies that $m_2 \leq n_1$.
  Otherwise, that is $n_1 - l_0 <  \frac{k}{x_0}$, we have \begin{align}m_2 < \max(n_1,l_0) + \dfrac{k}{y_2} < l_0 + \dfrac{k}{x_0} + \dfrac{k}{y_2} < l_0 + \dfrac{2k}{x_0}.\end{align}
  Therefore, we now claim that $\{n_i\}_{i<s_2}$ and $\{m_j\}_{j<s_2}$ are smaller than $\max(m_0,l_0 + \frac{2k}{x_0})$. In fact, we can take the induction under $s_2$. Without loss of generality, we might only prove that the claim holds for the mutated number $n_{i+1}$, that is $(l_{i+1},m_{i+1},n_{i+1})=\delta_{3}(l_i,m_i,n_i)$. If $m_i - l_0 \geq \frac{k}{x_0}$, then by \Cref{lem: internal condition}, $n_{i+1}$ is again the internal division point between $l_0$ and $m_i$. Thus, it implies that \begin{align}n_{i+1} \leq m_i\leq \max(m_0,l_0 + \frac{2k}{x_0}).\end{align}
  Otherwise, that is  $m_i - l_0 < \frac{k}{x_0}$, then we have \begin{align}n_{i+1} < \max(m_i,l_0) + \dfrac{k}{z_{i+1}} < l_0 + \dfrac{k}{x_0} + \dfrac{k}{z_{i+1}} = l_0 + \dfrac{2k}{x_0},\end{align} which also satisfies the condition.
  Thus, by induction, we conclude that $\{n_i\}_{i<s_2}$ and $\{m_j\}_{j<s_2}$ are both smaller than $\max(m_0,l_0 + \frac{2k}{x_0})$.
  
\textbf{Step $2$}:
  Now, we consider the next mutation $\delta_{w_{s_2}}=\delta_1$, that is to say $(l_{s_2},m_{s_2},n_{s_2})=\delta_1(l_{s_2-1},m_{s_2-1},n_{s_2-1})$. According to Step $1$, it is clear that \begin{align}l_{s_2}\leq \max(m_0 + \frac{k}{x_{s_2}},l_0 + \frac{2k}{x_0} + \frac{k}{x_{s_2}})\leq \max(m_0 + \frac{2k}{y_{s_2-1}},l_0 + \frac{2k}{x_0} + \frac{2k}{y_{s_2-1}}).\end{align}
  Furthermore, consider the next mutated number $n_{s_2+1}$. If $l_{s_2}-m_{s_2-1}\geq \frac{k}{y_{s_2-1}}$, then $n_{s_2+1}\in [m_{s_2-1},l_{s_2}]$. Otherwise, that is $l_{s_2}-m_{s_2-1}<\frac{k}{y_{s_2-1}}$, we have  \begin{align}\begin{array}{ll}n_{s_2+1} & \leq \max(l_{s_2},m_{s_2-1})+ \frac{k}{z_{s_2+1}}\\ & \leq m_{s_2-1}+ \frac{k}{y_{s_2-1}}+ \frac{k}{z_{s_2+1}}\\
  & \leq \max(m_0 + \frac{2k}{y_{s_2-1}},l_0 + \frac{2k}{x_0} + \frac{2k}{y_{s_2-1}}).\end{array}\end{align}
  Therefore, similar to the discussion above, by induction, we can conclude that  $\{l_i\}_{i<s_3}$ and $\{n_j\}_{j<s_3}$ are smaller than $\max(m_0 + \frac{2k}{y_{s_2-1}}, l_0 + \frac{2k}{x_0} + \frac{2k}{y_{s_2-1}})$.

\textbf{Step $3$}: Inductively, by the similar arguments as above, we can show that $\{l_j\}_j$, $\{m_j\}_j$, $\{n_j\}_j$ are all smaller than \begin{align}\max(m_0 + \dfrac{2k}{y_{s_2-1}} + \dfrac{2k}{z_{s_3-1}} + \dfrac{2k}{x_{s_4-1}} + \cdots,l_0 + \dfrac{2k}{x_0} + \dfrac{2k}{y_{s_2-1}} + \dfrac{2k}{z_{s_3-1}} + \dfrac{2k}{x_{s_4-1}} + \cdots).\end{align}
  By \Cref{lem:fibonacci-estimate}, the mutated numbers corresponding to the $3$-cyclic subsequence are bounded above by the reciprocals in the Fibonacci sequence. That is to say, 
  \begin{align}
    \left\{
      \begin{array}{ll}
        m_0 + \frac{2k}{y_{s_2-1}} + \frac{2k}{z_{s_3-1}} + \frac{2k}{x_{s_4-1}} + \cdots < m_0 + \frac{2k}{1} + \frac{2k}{1} + \frac{2k}{2}  + \cdots =m_0+2k\sum_{n=1}^{\infty}\frac{1}{F_n} \\
        l_0 + \frac{2k}{x_{s_1-1}} + \frac{2k}{y_{s_2-1}} + \frac{2k}{z_{s_3-1}} + \frac{2k}{x_{s_4-1}} + \cdots < l_0 + \frac{2k}{1} + \frac{2k}{1} + \frac{2k}{2} + \frac{2k}{3} + \cdots=l_0+ 2k\sum_{n=1}^{\infty}\frac{1}{F_n} \\
      \end{array}
    \right. .\label{eq: two series}
  \end{align}
  According to \Cref{prop: Fibonacci converge}, we obtain that the series on the right hand side of \eqref{eq: two series} converge.
  Therefore, the sequences $\{l_j\}_j$, $\{m_j\}_j$, $\{n_j\}_j$ are all bounded above, which implies that the sequence $\{\max(l_j,m_j,n_j)\}_{j=1}^{+\infty}$ is bounded above.
\end{proof}
\begin{example}\label{ex: bounded above}
	Let us consider the special $3$-cyclic mutation sequence $\mfw=[1,2,3,1,2,3,\dots]$.
  Then, by a direct calculation, the corresponding mutation chain in $\mcE$ forms a Fibonacci-type sequence.
  Recall that the mutation on the comparison triples $(l_j,m_j,n_j)$ can be described visually as moving the internal division point right. 
  Thus, we have \begin{align}l_{s} < \max(m_0,n_0) + \dfrac{k}{1} + \dfrac{k}{1} + \dfrac{k}{2} + \dfrac{k}{3} + \dfrac{k}{5} + \dfrac{k}{8} + \cdots \ (s\gg 0).\end{align} By \Cref{lem: Fibonacci property}, we may conclude that the series on the right hand side converge, which implies that $l_s$ is bounded above. 
  Note that the same argument also holds for $m_s$ and $n_s\ (s\gg 0)$.
\end{example}
\begin{corollary}\label{cor: min converge} Let the conditions be the same as above. Then, the sequence of the minimum of $(l_i,m_i,n_i)$ converges. That is to say, 
	\begin{align}
		\lim_{i\rightarrow+\infty}\min(l_i,m_i,n_i)< +\infty.
	\end{align}
\end{corollary}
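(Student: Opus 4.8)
The plan is to combine a simple monotonicity observation with the upper bound already established in \Cref{prop:comparison-bounded}. First I would record the elementary fact that, for each of the three mutations $\delta_1,\delta_2,\delta_3$ of the comparison triple, the coordinate that gets changed is a convex combination of the other two coordinates plus a nonnegative correction term. Concretely, reading off the explicit formulas in the proof of \Cref{lem: internal condition}: if $(l_{j+1},m_{j+1},n_{j+1})=\delta_2(l_j,m_j,n_j)$, then
\begin{align}
m_{j+1}=\frac{x_j}{x_j+z_j}\,l_j+\frac{z_j}{x_j+z_j}\,n_j+\frac{k}{x_j+z_j}\ \geq\ \min(l_j,n_j),
\end{align}
since $x_j,z_j>0$, $k\geq 0$, and the two weights are nonnegative with sum $1$; the analogous inequalities $l_{j+1}\geq\min(m_j,n_j)$ under $\delta_1$ and $n_{j+1}\geq\min(l_j,m_j)$ under $\delta_3$ hold by symmetry.

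From this I would deduce that the sequence $\{\min(l_j,m_j,n_j)\}_{j\geq 0}$ is non-decreasing. Indeed, $\delta_i$ alters only the $i$-th coordinate, and by the previous step the new $i$-th coordinate is at least the minimum of the two unaltered coordinates; hence the minimum of the new triple equals the minimum of those two unaltered coordinates, which is trivially $\geq \min(l_j,m_j,n_j)$. Thus $\min(l_{j+1},m_{j+1},n_{j+1})\geq \min(l_j,m_j,n_j)$ for every $j$; note that this step does not even use the hypothesis on $\mfw$.

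Finally, invoking \Cref{prop:comparison-bounded} (which does use that $1,2,3$ occur infinitely often in $\mfw$), the sequence $\{\max(l_j,m_j,n_j)\}_j$ is bounded above by some $M$, so a fortiori $\min(l_j,m_j,n_j)\leq M$ for all $j$. A bounded monotone sequence converges, which gives $\lim_{i\to+\infty}\min(l_i,m_i,n_i)<+\infty$, as claimed. There is no serious obstacle here; the only point deserving a line of care is verifying that the nonnegative correction term never drops the mutated coordinate below the minimum of the other two, and that is immediate from $k\geq 0$ together with the convex-combination structure.
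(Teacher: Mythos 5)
Your argument is correct and follows essentially the same route as the paper: monotonicity of $\{\min(l_j,m_j,n_j)\}_j$ combined with the upper bound from \Cref{prop:comparison-bounded} and the monotone convergence theorem. The only difference is that you spell out the convex-combination reason for monotonicity, which the paper simply asserts; that verification is accurate.
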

\begin{proof}
	Note that the sequence $\{\min(l_i,m_i,n_i)\}_{i=0}^{+\infty}$ is a monotonically increasing sequence. Hence, by \Cref{prop:comparison-bounded} and the monotone convergence theorem, this sequence is convergent. 
\end{proof}
\begin{lemma}\label{lem: total length}
	Let $p\in \mbR_{+}$. Take any $k$-generalized Euclid tree $\mcK$ with the initial triple $(A,B,C)$ and the classical Euclid tree $\mcE$ with the initial triple $(a,b,c)$. Assume that $(x,y,z)$ is a triple in $\mcE$, such that $x,y,z>\frac{k}{p}$. Denote the corresponding triple in $\mcK$ by $(X,Y,Z)$ and the comparison triple by $(l,m,n)$. Then, the following statements hold.
	\begin{enumerate}
		\item If $|l-m|<p$ and $(l,m,n^{\prime})=\delta_3(l,m,n)$, then $\max(|l-m|,|m-n^{\prime}|,|l-n^{\prime}|)<2p$.
		\item If $|l-n|<p$ and $(l,m^{\prime},n)=\delta_2(l,m,n)$, then $\max(|l-m^{\prime}|,|m^{\prime}-n|,|l-n|)<2p$.
		\item If $|n-m|<p$ and $(l^{\prime},m,n)=\delta_1(l,m,n)$, then $\max(|l^{\prime}-m|,|m-n|,|l^{\prime}-n|)<2p$.
	\end{enumerate}
\end{lemma}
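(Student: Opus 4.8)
The plan is to prove all three parts by one elementary convexity estimate, deducing parts (2) and (3) from part (1) by relabelling indices, so I will describe the argument for part (1) in detail. The starting observation is structural: reading off the formulas for $\delta_1,\delta_2,\delta_3$ recorded in the proof of \Cref{lem: internal condition}, the coordinate of the comparison triple that gets mutated is always a convex combination of the other two coordinates, plus a nonnegative error term of the form $k/(\text{sum of two entries of }(x,y,z))$. In part (1), where we apply $\delta_3$,
\[
n' = \frac{x}{x+y}\,l + \frac{y}{x+y}\,m + \frac{k}{x+y} = c + \varepsilon, \qquad c := \frac{x}{x+y}\,l + \frac{y}{x+y}\,m, \quad \varepsilon := \frac{k}{x+y},
\]
and $c$ lies in the closed interval with endpoints $l$ and $m$.

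First I would extract the only place where the hypothesis $x,y,z > k/p$ is used: since $x+y > x > k/p$, the error term satisfies $\varepsilon = k/(x+y) < p$; the analogous bounds $k/(x+z) < p$ and $k/(y+z) < p$ hold for the error terms attached to $\delta_2$ and $\delta_1$. Next, because $c$ is a convex combination of $l$ and $m$, both $|l-c|$ and $|m-c|$ are at most $|l-m| < p$. The triangle inequality then gives $|l-n'| \le |l-c| + \varepsilon < p + p = 2p$ and $|m-n'| \le |m-c| + \varepsilon < 2p$, while $|l-m| < p < 2p$; taking the maximum yields part (1).

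For part (2) I would note that $\delta_2$ has the identical shape with $(l,m,n)$ and $(x,y,z)$ suitably relabelled: the mutated coordinate $m'$ is a convex combination of $l$ and $n$ plus $k/(x+z) < p$, and the hypothesis $|l-n| < p$ plays the role that $|l-m| < p$ played above, so the same two-line estimate gives $\max(|l-m'|,|m'-n|,|l-n|) < 2p$. Part (3) is again the same: $l'$ is a convex combination of $m$ and $n$ plus $k/(y+z) < p$, with hypothesis $|m-n| < p$. I do not expect any genuine obstacle in this lemma; the single point needing care is ensuring the error term is strictly dominated by $p$, which is precisely the purpose of the standing assumption $x,y,z > k/p$, and from there the conclusion is just the triangle inequality applied to a point sitting inside an interval of length less than $p$.
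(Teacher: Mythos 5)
Your argument is correct and is essentially the paper's own proof: both write the mutated coordinate as a convex combination of the two unchanged coordinates plus a shift bounded by $\frac{k}{x}<p$ (the paper phrases this as an internal division point of the interval translated by $\Delta$), and both conclude by the triangle inequality, treating the remaining cases by symmetry. No issues to report.
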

\begin{proof}
	Without loss of generality, we might only consider the case $(2)$ since others are similar. Note that $m^{\prime}=\frac{x}{x + z}\times (l + \frac{k}{x + z}) + \frac{z}{x + z}\times (n + \frac{k}{x + z})$. It means that $m^{\prime}$ is the internal division point of the interval between $l+\Delta$ and $n+\Delta$, where $\Delta=\frac{k}{x + z}$. (See \Cref{cluster real}). Hence, we have $\Delta<\frac{k}{x}<p$, which implies that $\max(|l-m^{\prime}|,|m^{\prime}-n|,|l-n|)<2p$. 
\end{proof}
\begin{lemma}\label{lem: total length bound}
	Let the conditions be the same as above. If the total length of the intervals between $l,m,n$ is less than $2p$, then it always  holds under any mutation $\delta_i$. 
\end{lemma}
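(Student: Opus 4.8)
The plan is to reduce to a single mutation step and then feed in the two preceding lemmas. Write $\sigma(l,m,n)=\max(l,m,n)-\min(l,m,n)$; note that the total length of the two intervals between $l,m,n$ is exactly $\sigma(l,m,n)$, and that $\sigma(l,m,n)=\max(|l-m|,|m-n|,|n-l|)$. So the assertion is that the condition $\sigma<2p$ is preserved by each of $\delta_1,\delta_2,\delta_3$. I would also note that the standing hypothesis $x,y,z>\frac{k}{p}$ propagates along $\mcE$: every $\mcM_i$ replaces one component by the sum of the other two, each of which already exceeds $\frac{k}{p}$, so the new component does too. Hence, once the one-step claim is established, $\sigma<2p$ persists along any mutation sequence by induction.

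By the evident symmetry among the three mutation rules (permuting the indices $\{1,2,3\}$ together with the entries of $(l,m,n)$ and $(x,y,z)$), it suffices to treat $\delta_2$, so that $l$ and $n$ stay fixed and $m$ becomes $m'$. Since $l$ and $n$ are two of the three numbers $l,m,n$, we already have $|l-n|\le\sigma(l,m,n)<2p$. The key step is to split into cases according to the gap $|l-n|$ between the two \emph{unmutated} entries, with threshold $p$.

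If $|l-n|<p$, I would invoke \Cref{lem: total length}(2), whose hypothesis is precisely $|l-n|<p$ and whose remaining assumptions ($x,y,z>\frac{k}{p}$) we have; it yields $\max(|l-m'|,|m'-n|,|l-n|)<2p$, i.e. $\sigma(l,m',n)<2p$. If instead $p\le|l-n|<2p$, then $l\ne n$; assuming $l<n$, the bound $x>\frac{k}{p}$ gives $x(n-l)>\frac{k}{p}\cdot p=k$, so the strict form of \Cref{lem: internal condition}(3) forces $m'\in(l,n)$; and if $n<l$ one instead applies \Cref{lem: internal condition}(4), the strict inequality $z(l-n)>k$ arising the same way. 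In either situation $l,m',n$ all lie in $[\min(l,n),\max(l,n)]$, so $\sigma(l,m',n)=|l-n|<2p$. Thus $\sigma<2p$ is restored after $\delta_2$; the same dichotomy settles $\delta_1$ and $\delta_3$, using parts (1) and (2) (resp. (5) and (6)) of \Cref{lem: internal condition} and part (3) (resp. part (1)) of \Cref{lem: total length}.

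I do not expect a real obstacle. The one genuine idea is to split on the distance between the two unmutated entries and to observe that a gap of at least $p$ there is large compared with the displacement $\frac{k}{x+z}<\frac{k}{x}<p$, which keeps the mutated entry trapped strictly between the other two so that $\sigma$ cannot grow; when the gap is small there is nothing to do beyond quoting \Cref{lem: total length}. The points requiring care are matching the correct labelled case of each lemma to each of $\delta_1,\delta_2,\delta_3$ and checking that the relevant inequalities involving $k$ come out strict, so that \Cref{lem: internal condition} gives membership in the open interval.
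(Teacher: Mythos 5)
Your proposal is correct and follows essentially the same route as the paper: the same dichotomy on whether the gap between the two unmutated entries is at least $p$ (in which case \Cref{lem: internal condition} traps the mutated entry between the other two, so the total length cannot grow) or less than $p$ (in which case \Cref{lem: total length} gives the bound $2p$ directly), together with the observation that $x,y,z>\frac{k}{p}$ persists under the classical Euclid mutations. The only cosmetic difference is that you reduce to $\delta_2$ by symmetry while the paper fixes an ordering $l\leq m\leq n$ and runs through $\delta_1,\delta_2,\delta_3$ explicitly.
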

\begin{proof}
	Firstly, we have $\max(|l-m|,|m-n|,|l-n|)<2p$. Without loss of generality, we might assume that $l\leq m\leq n$, which means that $m-l<2p$. By the mutation rules of the classical Euclid tree, the property that $x,y,z>\frac{k}{p}$ will always keep under the mutations.
	\begin{enumerate}
		\item Suppose that the next mutation is $\delta_1$. If $n-m\geq p>\frac{k}{y}$, then by \Cref{lem: internal condition}, $l^{\prime}$ lies in the interval between $m$ and $n$. It is clear that the total length of the new intervals becomes smaller. If $n-m< p$, then according to \Cref{lem: total length}, we have $\max(|l^{\prime}-m|,|m-n|,|l^{\prime}-n|)<2p$.
		\item Suppose that the next mutation is $\delta_2$. If $n-l\geq p>\frac{k}{x}$, then by \Cref{lem: internal condition}, $m^{\prime}$ lies in the interval between $l$ and $n$. Hence, the total length of the new intervals
 keeps invariant. If $n-l< p$, then according to \Cref{lem: total length}, we have $\max(|l-m^{\prime}|,|m^{\prime}-n|,|l-n|)<2p$.
		\item Suppose that the next mutation is $\delta_3$. If $m-l\geq p>\frac{k}{x}$, then by \Cref{lem: internal condition}, $n^{\prime}$ lies in the interval between $l$ and $m$. Hence, the total length of the new intervals becomes smaller. If $m-l< p$, then by \Cref{lem: total length}, we have $\max(|l-m|,|m-n^{\prime}|,|l-n^{\prime}|)<2p$.
	\end{enumerate}
In conclusion, the total length of the intervals under the mutations is always less that $2p$.
\end{proof}

\begin{proposition}\label{prop:comparison-convergence}
  Take any $k$-generalized Euclid tree $\mcK$ with the initial triple $(A,B,C)$ and the classical Euclid tree $\mcE$ with the initial triple $(a,b,c)$. Let $\mfw=[w_1,
  \dots,w_n,\dots]$ be an infinite reduced mutation sequence.
  Assume that $1,2,3$ all appear infinitely many times in $\mfw$.
  Then, there exists a real number $q$, such that each component of the triple in $\mcK$ converges to $q$ times of the corresponding component of the triple in $\mcE$ when $n$ goes infinity. That is to say, 
  \begin{align}
    \lim_{n\rightarrow +\infty}\frac{\mcM^{\mfw_n;k}(A,B,C)}{\mcM^{\mfw_n}(a,b,c)}=q.
      \end{align}
\end{proposition}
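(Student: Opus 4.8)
The plan is to prove that the three comparison sequences $\{l_j\}$, $\{m_j\}$ and $\{n_j\}$ all converge to one and the same real number $q$. By \Cref{def: Comparison triple} the componentwise quotient of $\mcM^{\mfw_n;k}(A,B,C)$ by $\mcM^{\mfw_n}(a,b,c)$ is exactly the comparison triple $(l_n,m_n,n_n)$, so this is precisely the assertion. Two facts are already at hand: by \Cref{cor: min converge} the sequence $\min(l_j,m_j,n_j)$ is nondecreasing and converges to some $q\in\mbR_+$, and by \Cref{prop:comparison-bounded} the sequence $\max(l_j,m_j,n_j)$ is bounded above. Hence it suffices to show that the diameter $D_j:=\max(l_j,m_j,n_j)-\min(l_j,m_j,n_j)$ tends to $0$: then $\max(l_j,m_j,n_j)\le\min(l_j,m_j,n_j)+D_j\to q$, the middle value is squeezed between the minimum and the maximum, and all three quotients converge to $q$.

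The basic tool is the description of the comparison mutations from \Cref{cluster real} and \Cref{lem: internal condition}: a mutation replaces one component of the comparison triple by the internal division point of the interval spanned by the other two components, displaced by an error $\Delta_j$ equal to $k$ divided by a sum of two classical Euclid components. Since $1,2,3$ all occur infinitely often in $\mfw$, we may fix a $3$-cyclic subsequence; by \Cref{lem:fibonacci-estimate} the classical components along it dominate the Fibonacci numbers, so $x_j,y_j,z_j\to+\infty$, and for a fixed $p>0$ there is an $N$ with $x_j,y_j,z_j>k/p$ for all $j\ge N$. For $j\ge N$ this gives $\Delta_j<p$, and, by \Cref{lem: internal condition}, also: whenever the gap between the two unmutated comparison values exceeds $p$, the mutated value lands strictly between them. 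Together with \Cref{lem: total length bound}, which shows that the inequality $D_j<2p$, once it holds at a single stage $j\ge N$, persists for all later stages, it remains only to rule out — for each small $p>0$ — the alternative that $D_j\ge 2p$ for all $j\ge N$; doing so for every $p$ gives $D_j\to0$.

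So suppose $D_j\ge 2p$ for all $j\ge N$ (hence, since $\min(l_j,m_j,n_j)\to q$, also $\limsup_j\max(l_j,m_j,n_j)\ge q+2p$). I would track the minimum and the maximum separately. One checks that $\min(l_j,m_j,n_j)$ changes only at a step where the index currently carrying the minimum is mutated, and that at such a step the minimum jumps exactly to the old middle value; consequently $\sum_j(\min_{j+1}-\min_j)=q-\min(l_0,m_0,n_0)<\infty$. Under our assumption, at every mutate-minimum step with $j\ge N$ the two unmutated values (the old middle and the old maximum) differ by at least $2p>p$, so by \Cref{lem: internal condition} the mutated value lands strictly between them, whence $D_{j+1}=D_j-(\min_{j+1}-\min_j)$ at such steps; at the remaining steps $D_j$ grows by at most $\Delta_j$, and — exactly as for the bound on $\max(l_j,m_j,n_j)$ in the proof of \Cref{prop:comparison-bounded} — this upward growth is controlled blockwise along a $3$-cyclic subsequence by the convergent series $2k\sum_n 1/F_n$ (finite by \Cref{prop: Fibonacci converge}). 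Thus $D_j$ has finite total variation, hence converges; a case analysis on the mutation pattern, tracking which index carries each extreme value, then shows that the only possible limit is $0$, contradicting $D_j\ge 2p$. Therefore $D_j\to0$, and the limit in the statement equals the common value $q$.

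The step I expect to be the main obstacle is exactly this endgame: ruling out that $D_j$ stabilizes at a positive value. A single mutation of the index carrying the middle value can enlarge $D_j$ by as much as $\Delta_j$, and although each $\Delta_j$ is small for $j\ge N$, a naive step-by-step estimate would add infinitely many of them. The correct accounting is blockwise along a $3$-cyclic subsequence, as in the proof of \Cref{prop:comparison-bounded}: each block contributes only one Fibonacci-reciprocal term to the total enlargement, so \Cref{prop: Fibonacci converge} controls everything — but carrying out this bookkeeping while simultaneously keeping track, at every step, of which index currently carries the maximum, the middle and the minimum is the technical heart of the argument.
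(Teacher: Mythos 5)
Your overall skeleton is the same as the paper's (reduce the statement to ``the diameter of the comparison triple tends to $0$'', using \Cref{cor: min converge}, \Cref{prop:comparison-bounded}, \Cref{lem: internal condition}, \Cref{lem: total length bound} and the Fibonacci control from \Cref{prop: Fibonacci converge}), and your preliminary observations (the minimum is nondecreasing, changes only when the minimal index is mutated, and then jumps to the old middle value, so its increments are summable) are correct. But the proof has a genuine gap precisely at the step you yourself identify as the ``endgame''. First, the one concrete justification you give there is wrong: from $D_j\ge 2p$ you infer that ``the old middle and the old maximum differ by at least $2p$'', but $D_j=\max-\min\ge 2p$ says nothing about $\max-\mathrm{mid}$, which can be arbitrarily small; so \Cref{lem: internal condition} does not apply as claimed, and the identity $D_{j+1}=D_j-(\min_{j+1}-\min_j)$ at mutate-minimum steps is not established. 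Second, even granting (as can indeed be arranged blockwise) that $D_j$ has summable upward increments and hence converges, the conclusion ``the only possible limit is $0$'' is exactly the hard part and is not argued: during a long stretch in which only two mutations alternate (say $\delta_1,\delta_2$), the diameter genuinely does stabilize near a positive value, so no bookkeeping of the variation of $D_j$ alone can rule out $D_\infty>0$.

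What is missing is the mechanism the paper uses in its Cases 1, 1.1, 1.2 and 2: after the stage where all classical components exceed $k/\epsilon$, one shows that once the total length drops below $2\epsilon$ it stays there (\Cref{lem: total length}, \Cref{lem: total length bound}); that as long as it has not, the triple is forced into a pattern where two of the comparison values are within $\epsilon$ of each other, and this closeness propagates along any prolonged alternation of two mutations because the internal division point is biased toward the newer endpoint (this uses explicit inequalities between classical Euclid components such as $x_{i_1+1}=y_{i_1}+z_{i_1}>y_{i_1}=y_{i_1+1}$); and that the hypothesis that all three indices occur infinitely often then guarantees that the first occurrence of the third mutation inserts the new value inside the short interval and pushes the whole triple within total length $2\epsilon$ permanently. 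Without carrying out this case analysis (or an equivalent argument exploiting the infinitely-many-occurrences hypothesis at the moment the third mutation finally appears), the proposal does not yield the contradiction with $D_j\ge 2p$, so the convergence claim is not proved.
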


\begin{proof}  In fact, this is equivalent to show that all the components of the comparison triples $\{(l_j,m_j,n_j)\}_j$ converge to $q$ when $j$ goes infinity. In the following, we use the same notations as \Cref{prop:comparison-bounded}.
  
 For any positive real number $\epsilon \in \mbR_{+}$, since we have assumed that the mutations in three directions appear infinite times, each component of triples in the classical Euclid chain keeps strictly increasing.
  Therefore, there exists $j_0 \in \mbN$, such that $x_j,y_j,z_j > \frac{k}{\epsilon}$ for any $j \geq j_0$. Now, we claim that the total length of the intervals will be small enough under certain mutations, that is 
  \begin{align}
    \forall \epsilon > 0, \exists j_1(\geq j_0) \in \mbN, \text{ s.t. } \forall j \in \mbN, j \geq j_3,\  \max\{|m_j-l_j|, |n_j-m_j|, |n_j-l_j|\} < 2 \epsilon. 
  \end{align}
  In fact, by \Cref{lem: total length bound}, we can assume that the initial total length of the intervals indexed by $i_0$ is larger than $2\epsilon$. Without loss of generality, we might assume that $l_0\leq m_0 \leq n_0$. If $m_0-l_0\geq \epsilon$ and $n_0-m_0\geq \epsilon$, once we mutate at $\delta_1$ or $\delta_3$, the total length of the intervals will reduce at least $\epsilon$. Once we mutate at $\delta_2$, the total length keeps invariant. Note that the total length is bounded by \Cref{prop:comparison-bounded}. Hence, after finitely many mutations (at $i_1$), the length of at least one of the intervals will always be less than $\epsilon$, regardless of any subsequent mutations. More precisely, we might assume that $l_{i_1}\leq m_{i_1}\leq n_{i_1}$ and either $m_{i_1}-l_{i_1}<\epsilon$ or $n_{i_1}-m_{i_1}<\epsilon$, see \Cref{Two cases less}. And, for any $j\geq i_1$, at least one of the inequalities $|l_j-m_j|<\epsilon$, $|l_j-n_j|<\epsilon$, $|m_j-n_j|<\epsilon$ holds.
  \begin{figure}[htpb]
	\begin{tikzpicture}[scale=0.7] 
	\begin{scope}[shift={(0,0)}]
  \coordinate (A1) at (0,0);
  \coordinate (B1) at (1.5,0);
  \coordinate (C1) at (4,0);
  \draw (A1) -- (C1);
  \fill (A1) circle (2pt) node[below=3pt] {$l_{i_1}$};
  \fill (B1) circle (2pt) node[below=3pt] {$m_{i_1}$};
  \fill (C1) circle (2pt) node[below=3pt] {$n_{i_i}$};
  \draw [decorate,decoration={brace,amplitude=8pt}]
        (A1) -- (B1) node[midway, yshift=12pt] {$< \epsilon$};
    \draw [decorate,decoration={brace,amplitude=8pt}]
        (B1) -- (C1) node[midway, yshift=12pt] {$\geq  \epsilon$};
  \end{scope}
		\begin{scope}[shift={(6,0)}]
  \coordinate (A2) at (0,0);
  \coordinate (B2) at (2.5,0);
  \coordinate (C2) at (4,0);
  \draw (A2) -- (C2);
  \fill (A2) circle (2pt) node[below=3pt] {$l_{i_1}$};
  \fill (B2) circle (2pt) node[below=3pt] {$m_{i_1}$};
  \fill (C2) circle (2pt) node[below=3pt] {$n_{i_1}$};
  \draw [decorate,decoration={brace,amplitude=8pt}]
        (A2) -- (B2) node[midway, yshift=12pt] {$\geq  \epsilon$};
    \draw [decorate,decoration={brace,amplitude=8pt}]
        (B2) -- (C2) node[midway, yshift=12pt] {$< \epsilon$};
  \end{scope}

	\end{tikzpicture}
\caption{Two cases that the length of one interval is less than $\epsilon$}
\label{Two cases less}
\end{figure}
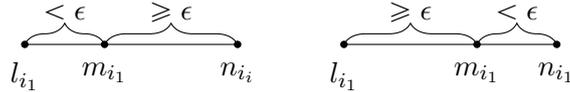
  
  Case $1$: Assume that $m_{i_1}-l_{i_1}<\epsilon$ and $n_{i_1}-m_{i_1}\geq \epsilon$. We can omit the case that we mutate at $\delta_2$ since it will not affect the total length by \Cref{lem: internal condition} (It may just transfer the Case $1$ to Case $2$ as follows). If we mutate at $\delta_3$, by \Cref{lem: total length} and \Cref{lem: total length bound}, the total length is less than $2\epsilon$ and the claim holds. If we mutate at $\delta_1$, by \Cref{lem: internal condition}, $l_{i_{1}+1}$ lies between $m_{i_{1}+1}=m_{i_1}$  and $n_{i_{1}+1}=n_{i_1}$. Next, there are two choices of the mutations $\delta_2$ and $\delta_3$.
  \begin{itemize}[leftmargin=2em]
  	\item Case $(1.1)$: Consider the next comparison triple $(l_{i_1+2},m_{i_1+2},n_{i_1+2})$ under the mutation $\delta_3$. If $l_{i_{1}+1}-m_{i_{1}+1}< \epsilon$, then by \Cref{lem: total length} and \Cref{lem: total length bound}, the total length of the intervals between $l_{i_1+2},m_{i_1+2},n_{i_1+2}$ is less than $2\epsilon$ and the claim holds. If $l_{i_{1}+1}-m_{i_{1}+1}\geq  \epsilon$, then $n_{i_1+2}$ becomes the internal point between others and we conclude that $n_{i_1+2}-m_{i_1+2}>l_{i_1+2}-n_{i_1+2}$. Indeed, we have 
  	\begin{align}
  		n_{i_1+2}=\frac{x_{i_1+1}}{x_{i_1+1} + y_{i_1+1}}\times l_{i_1+1} + \frac{y_{i_1+1}}{x_{i_1+1} + y_{i_1+1}}\times m_{i_1+1} + \frac{k}{x_{i_1+1} + y_{i_1+1}}.
  	\end{align} It implies that $n_{i_1+2}$ is obtained by dividing the interval $[m_{i_1+1},l_{i_1+1}]$ internally in the ratio $(x_{i_1+1}:y_{i_1+1})$, and  then shifting $\frac{k}{x_{i_1+1} + y_{i_1+1}}$ to the right. Note that 
  	\begin{align}
  		x_{i_1+1}=y_{i_1}+z_{i_1}>y_{i_1}=y_{i_1+1}.
  	\end{align} Hence, we have $l_{i_1+2}-n_{i_1+2}<\epsilon$. Next, if we mutate at $\delta_2$, then by \Cref{lem: total length} and \Cref{lem: total length bound}, the total length of the intervals will be less than $2\epsilon$ and the claim holds. Thus, we only need to consider the comparison triple $(l_{i_1+3},m_{i_1+3},n_{i_1+3})$ under the mutation $\delta_1$. Note that
  	\begin{align}
  		l_{i_1+3}=\frac{y_{i_1+2}}{y_{i_1+2} + z_{i_1+2}}\times m_{i_1+2} + \frac{z_{i_1+2}}{y_{i_1+2} + z_{i_1+2}}\times n_{i_1+2} + \frac{k}{y_{i_1+2} + z_{i_1+2}}.
  	\end{align} It implies that $l_{i_1+3}$ is obtained by dividing the interval $[m_{i_1+3},n_{i_1+3}]$ internally in the ratio $(z_{i_1+2}:y_{i_1+2})$, and  then shifting $\frac{k}{y_{i_1+2} + z_{i_1+2}}$ to the right. Since
  	\begin{align}
  		z_{i_1+2}=x_{i_1+1}+y_{i_1+1}>y_{i_1+1}=y_{i_1+2},
  	\end{align} we have $l_{i_1+3}-m_{i_1+3}>n_{i_1+3}-l_{i_1+3}$ and $n_{i_1+3}-l_{i_1+3}<\epsilon$. Similarly, by induction, by alternating $\delta_1$ and $\delta_2$, we always have $|n_j-l_j|<\epsilon$ for any $j\geq i_1+2$, see \Cref{Case 1 infinite}. However, according to the initial condition, $\delta_1,\delta_2,\delta_3$ will appear infinitely many times. Thus, once we mutate at $\delta_2$, by \Cref{lem: total length} and \Cref{lem: total length bound}, the total length of the intervals will be less than $2\epsilon$ and the claim holds.
  	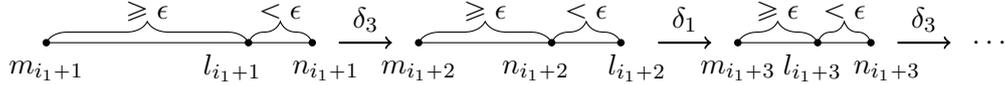
\begin{figure}[htpb]
\begin{tikzpicture}[scale=0.7] 
\begin{scope}[shift={(0,0)}]
  \coordinate (A1) at (-1,0);
  \coordinate (B1) at (2.8,0);
  \coordinate (C1) at (4,0);
  \draw (A1) -- (C1);
  \fill (A1) circle (2pt) node[below=3pt] {$m_{i_1+1}$};
  \fill (B1) circle (2pt) node[below=0.5pt, xshift=-6pt] {$l_{i_1+1}$};
  \fill (C1) circle (2pt) node[below=3pt, xshift=5pt] {$n_{i_1+1}$};
  \draw [decorate,decoration={brace,amplitude=8pt}]
        (A1) -- (B1) node[midway, yshift=12pt] {$\geq \epsilon$};
    \draw [decorate,decoration={brace,amplitude=8pt}]
        (B1) -- (C1) node[midway, yshift=12pt] {$<  \epsilon$};
  \end{scope}
  
  \begin{scope}[shift={(6,0)}]
  \coordinate (A2) at (0,0);
  \coordinate (B2) at (2.5,0);
  \coordinate (C2) at (3.8,0);
  \draw (A2) -- (C2);
  \fill (A2) circle (2pt) node[below=3pt] {$m_{i_1+2}$};
  \fill (B2) circle (2pt) node[below=3pt, xshift=-6pt] {$n_{i_1+2}$};
  \fill (C2) circle (2pt) node[below=0.5pt, xshift=6pt] {$l_{i_1+2}$};
  \draw [decorate,decoration={brace,amplitude=8pt}]
        (A2) -- (B2) node[midway, yshift=12pt] {$\geq  \epsilon$};
    \draw [decorate,decoration={brace,amplitude=8pt}]
        (B2) -- (C2) node[midway, yshift=12pt] {$< \epsilon$};
  \end{scope}

  \begin{scope}[shift={(12,0)}]
  \coordinate (A3) at (1.5,0);
  \coordinate (B3) at (0,0);
  \coordinate (C3) at (2.5,0);
  \draw (A3) -- (C3);
  \fill (A3) circle (2pt) node[below=0.5pt, xshift=-2pt] {$l_{i_1+3}$};
  \fill (B3) circle (2pt) node[below=3pt, xshift=0pt] {$m_{i_1+3}$};
  \fill (C3) circle (2pt) node[below=3pt, xshift=6pt] {$n_{i_1+3}$};
  \draw (A3) -- (B3);
    \draw [decorate,decoration={brace,amplitude=8pt}]
        (B3) -- (A3) node[midway, yshift=12pt] {$\geq  \epsilon$};
        \draw [decorate,decoration={brace,amplitude=8pt}]
        (A3) -- (C3) node[midway, yshift=12pt] {$< \epsilon$};
  \end{scope}
  \draw[->, thick] (4.5,0) -- (5.5,0) node[midway, above=0pt] {$\delta_3$};  \draw[->, thick] (10.5,0) -- (11.5,0)node[midway, above=0pt] {$\delta_1$};
  \draw[->, thick] (15,0) -- (16,0)node[midway, above=0pt] {$\delta_3$} node[right=4pt] {$\cdots$};
   \end{tikzpicture}
  \caption{Alternate mutations of $\delta_1$ and $\delta_3$}
  \label{Case 1 infinite}
  \end{figure}
  	\item Case $(1.2)$: Consider the next comparison triple $(l_{i_1+2},m_{i_1+2},n_{i_1+2})$ under the mutation $\delta_2$. If $n_{i_1+1}-l_{i_1+1}< \epsilon$, then by \Cref{lem: total length} and \Cref{lem: total length bound}, the total length of the intervals between $l_{i_1+2},m_{i_1+2},n_{i_1+2}$ will be less than $2\epsilon$ and the claim holds. If $n_{i_1+1}-l_{i_1+1}\geq \epsilon$, then according to \Cref{lem: internal condition}, $m_{i_1+2}$ becomes the internal point between $l_{i_1+2}$ and $n_{i_1+2}$. Note that 
  	\begin{align}
  		m_{i_1+2}=\frac{x_{i_1+1}}{x_{i_1+1} + z_{i_1+1}}\times l_{i_1+1} + \frac{z_{i_1+1}}{x_{i_1+1} + z_{i_1+1}}\times n_{i_1+1} + \frac{k}{x_{i_1+1} + z_{i_1+1}}.
  	\end{align} If $m_{i_1+2}-l_{i_1+2}\geq n_{i_1+2}-m_{i_1+2}$, by a direct calculation, we get 
  	\begin{align}
  		2k+(n_{i_1+1}-l_{i_1+1})(z_{i_1+1}-x_{i_1+1})\geq 0. 
  	\end{align} Since $z_{i_1+1}=y_{i_1}+z_{i_1}$ and $z_{i_1+1}=z_{i_1}$, we obtain that $n_{i_1+1}-l_{i_1+1}\leq \frac{2k}{y_{i_1}}<2\epsilon$. Then, by \Cref{lem: total length bound}, the claim holds. Hence, we only need to consider the case that $m_{i_1+2}-l_{i_1+2}< n_{i_1+2}-m_{i_1+2}$, where $m_{i_1+2}-l_{i_1+2}<\epsilon$ and $n_{i_1+2}-m_{i_1+2}\geq \epsilon$. Next, if we mutate at $\delta_3$, then by \Cref{lem: total length} and \Cref{lem: total length bound}, the total length will be less than $2\epsilon$ and the claim holds. Thus, we consider $(l_{i_1+3},m_{i_1+3},n_{i_1+3})$ under the mutation $\delta_1$. By the same discussion, if $l_{i_1+3}-m_{i_1+3}\geq n_{i_1+3}-l_{i_1+3}$, then $n_{i_1+3}-m_{i_1+3}<2\epsilon$ and the claim holds. If $l_{i_1+3}-m_{i_1+3}<n_{i_1+3}-l_{i_1+3}$, we have $l_{i_1+3}-m_{i_1+3}<\epsilon$. By induction, by alternating $\delta_1$ and $\delta_2$, we always have $|l_j-m_j|<\epsilon$ for any $j\geq i_1+2$, see \Cref{Case 2 infinite}. However, according to the initial condition, $\delta_1,\delta_2,\delta_3$ will appear infinitely many times. Thus, once we mutate at $\delta_3$, by \Cref{lem: total length} and \Cref{lem: total length bound}, the total length of the intervals will be less than $2\epsilon$ and the claim holds.
  \end{itemize} 
  \begin{figure}[htpb]
\begin{tikzpicture}[scale=0.7] 
\begin{scope}[shift={(0,0)}]
  \coordinate (A1) at (-1,0);
  \coordinate (B1) at (0.2,0);
  \coordinate (C1) at (4,0);
  \draw (A1) -- (C1);
  \fill (A1) circle (2pt) node[below=3pt] {$m_{i_1+1}$};
  \fill (B1) circle (2pt) node[below=0.5pt, xshift=6pt] {$l_{i_1+1}$};
  \fill (C1) circle (2pt) node[below=3pt] {$n_{i_1+1}$};
  \draw [decorate,decoration={brace,amplitude=8pt}]
        (A1) -- (B1) node[midway, yshift=12pt] {$< \epsilon$};
    \draw [decorate,decoration={brace,amplitude=8pt}]
        (B1) -- (C1) node[midway, yshift=12pt] {$\geq  \epsilon$};
  \end{scope}
  
  \begin{scope}[shift={(6,0)}]
  \coordinate (A2) at (0.2,0);
  \coordinate (B2) at (1.5,0);
  \coordinate (C2) at (4,0);
  \draw (A2) -- (C2);
  \fill (A2) circle (2pt) node[below=0.5pt] {$l_{i_1+2}$};
  \fill (B2) circle (2pt) node[below=3pt, xshift=5pt] {$m_{i_1+2}$};
  \fill (C2) circle (2pt) node[below=3pt] {$n_{i_1+2}$};
  \draw [decorate,decoration={brace,amplitude=8pt}]
        (A2) -- (B2) node[midway, yshift=12pt] {$<  \epsilon$};
    \draw [decorate,decoration={brace,amplitude=8pt}]
        (B2) -- (C2) node[midway, yshift=12pt] {$\geq \epsilon$};
  \end{scope}

  \begin{scope}[shift={(12,0)}]
  \coordinate (A3) at (1.5,0);
  \coordinate (B3) at (0.5,0);
  \coordinate (C3) at (3,0);
  \draw (A3) -- (C3);
  \fill (A3) circle (2pt) node[below=0.5pt, xshift=4pt] {$l_{i_1+3}$};
  \fill (B3) circle (2pt) node[below=3pt, xshift=-5pt] {$m_{i_1+3}$};
  \fill (C3) circle (2pt) node[below=3pt, xshift=5pt] {$n_{i_1+3}$};
  \draw (A3) -- (B3);
    \draw [decorate,decoration={brace,amplitude=8pt}]
        (B3) -- (A3) node[midway, yshift=12pt] {$<  \epsilon$};
        \draw [decorate,decoration={brace,amplitude=8pt}]
        (A3) -- (C3) node[midway, yshift=12pt] {$\geq \epsilon$};
  \end{scope}
  \draw[->, thick] (4.5,0) -- (5.5,0) node[midway, above=0pt] {$\delta_2$};  \draw[->, thick] (10.5,0) -- (11.5,0)node[midway, above=0pt] {$\delta_1$};
  \draw[->, thick] (15.5,0) -- (16.5,0)node[midway, above=0pt] {$\delta_2$} node[right=4pt] {$\cdots$};
   \end{tikzpicture}
  \caption{Alternate mutations of $\delta_1$ and $\delta_2$}
  \label{Case 2 infinite}
  \end{figure}
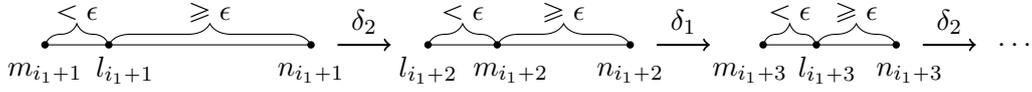
  
Case $2$: Assume that $m_{i_1}-l_{i_1}\geq \epsilon$ and $n_{i_1}-m_{i_1}< \epsilon$. If we mutate at $\delta_1$, then by \Cref{lem: total length} and \Cref{lem: total length bound}, the total length will be less than $2\epsilon$ and the claim holds. We can omit the case that we mutate at $\delta_2$ since it will not affect the total length by \Cref{lem: internal condition} (It may just transfer the Case $2$ to Case $1$ as above). The only nontrivial case is to mutate at $\delta_3$, which makes $n_{i_1+1}$ the internal point between $l_{i_1+1}=l_{i_1}$ and $m_{i_1+1}=m_{i_1}$. Luckily, the discussion is totally the same as Case $1$, which means that after finitely many mutations, the total length of the intervals will always be less than $2\epsilon$ and the claim holds.

 Finally, based on \Cref{cor: min converge} and the claim as above, we can directly obtain that each component of the comparison triple $(l_j,m_j,n_j)$ converges to a same real number $q$ as $\min(l_i,m_i,n_i)$.
\end{proof}

Here, we emphasize that in \Cref{prop:comparison-convergence}, we only consider the mutation chain where the three mutations $\delta_1,\delta_2,\delta_3$ appear infinitely  many times.
If one of the three mutations appear only finitely many times in the mutation chain, we have the following proposition.
\begin{proposition}\label{prop:comparison-convergence-two}
 Take any $k$-generalized Euclid tree $\mcK$ with the initial triple $(A,B,C)$ and the classical Euclid tree $\mcE$ with the initial triple $(a,b,c)$. Let $\mfw=[w_1,
  \dots,w_n,\dots]$ be an infinite reduced mutation sequence.
 Assume that one index $i$ of $\{1,2,3\}$ appears only finitely many times in $\mfw$.
  Then, there exists a real number $q$, such that two components of the triple indexed by $\{1,2,3\}\backslash \{i\}$ in $\mcK$ converge to $q$ times of the corresponding components of the triple in $\mcE$ when $n$ goes infinity. That is to say, set $X_j^{\mfw_n}$ to be the $j$-th component of $\mcM^{\mfw_n}_k(A,B,C)$ and $x_j^{\mfw_n}$ to be the $j$-th component of $\mcM^{\mfw_n}(a,b,c)$ with $j\neq i$, we have 
  \begin{align}
  \lim_{n\rightarrow +\infty}  \dfrac{X_j^{\mfw_n}}{x_j^{\mfw_n}}=q.
  \end{align}

\end{proposition}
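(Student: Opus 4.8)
The plan is to pass to the tail of $\mfw$ — where, because only two indices occur and $\mfw$ is reduced, the mutation pattern is forced to alternate between two fixed directions — and then to solve the affine recursion that this induces on the comparison triple exactly. I would normalize by assuming $i=1$ (the cases $i=2,3$ follow by relabelling the coordinates), and let $N$ be the largest index with $w_N=1$, taking $N=0$ if the letter $1$ never occurs. Then $[w_{N+1},w_{N+2},\dots]$ is $[2,3,2,3,\dots]$ or $[3,2,3,2,\dots]$; I would treat the first, the second being symmetric under swapping the second and third coordinates. The gain from this reduction is that the first coordinate freezes on both sides: $x_j=x_N=:x_\infty$ in $\mcE$ and $X_j=X_N$ in $\mcK$ for all $j\ge N$, so $l_j=X_j/x_j$ is the constant $l_\infty:=X_N/x_N$ from step $N$ on. Moreover, along this tail the classical rules \eqref{eq: classical} repeatedly replace the updated entry of $(x,y,z)$ by $x_\infty$ plus the other moving entry; hence $y_j$ and $z_j$ grow (at least linearly in $j$), so $\tfrac{x_\infty}{x_\infty+y_j}\to 0$ and $\tfrac{x_\infty}{x_\infty+z_j}\to 0$, while both series $\sum_j \tfrac{x_\infty}{x_\infty+y_j}$ and $\sum_j \tfrac{x_\infty}{x_\infty+z_j}$ diverge (comparison with the harmonic series).

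Next I would run the recursion on $(m_j,n_j)$ for $j\ge N$, using $l_j\equiv l_\infty$. Set $r:=l_\infty+\tfrac{k}{x_\infty}$. Substituting $x_j=x_\infty$, $l_j=l_\infty$ in the formulas for $\delta_2,\delta_3$ from \Cref{lem: internal condition} and absorbing the $\tfrac{k}{\,\cdot\,}$-term into the $l_\infty$-term (using $\beta l_\infty+\tfrac{k}{x_\infty+z}=\beta(l_\infty+\tfrac{k}{x_\infty})=\beta r$) shows that, with $\beta=\tfrac{x_\infty}{x_\infty+z}$ and $\gamma=\tfrac{x_\infty}{x_\infty+y}$ at the relevant step,
\[
\delta_2\colon\ m_{\mathrm{new}}=(1-\beta)\,n_{\mathrm{old}}+\beta\,r,
\qquad
\delta_3\colon\ n_{\mathrm{new}}=(1-\gamma)\,m_{\mathrm{old}}+\gamma\,r,
\]
so $r$ is a common fixed point of both sub-mutations. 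Letting $u_i$ be the value of $n$ after the $i$-th $\delta_3$ of the tail, $v_i$ the value of $m$ after the $i$-th $\delta_2$, and $\beta_i,\gamma_i$ the matching coefficients, one gets $v_{i+1}-r=(1-\beta_i)(u_i-r)$ and $u_{i+1}-r=(1-\gamma_i)(v_{i+1}-r)$, hence
\[
u_i-r=(n_N-r)\prod_{j\le i}(1-\beta_j)(1-\gamma_j).
\]
Since $\beta_j,\gamma_j\in(0,1)$ and $\sum_j(\beta_j+\gamma_j)=\infty$, this product tends to $0$; therefore $u_i\to r$, then $v_i\to r$, and since every $n_j$ with $j\ge N$ equals some $u_i$ and every such $m_j$ equals some $v_i$, we get $m_j\to r$ and $n_j\to r$. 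Putting $q:=r$, this is precisely the convergence of the coordinates of $\mcM^{\mfw_n}_k(A,B,C)$ indexed by $\{1,2,3\}\setminus\{1\}$ to $q$ times those of $\mcM^{\mfw_n}(a,b,c)$; the general case follows by symmetry, and one even obtains the explicit value $q=l_\infty+k/x_\infty$.

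The single genuine computation is the simplification yielding the displayed affine recursions; the point to get right there is that the $k$-shift and the $l_\infty$-part assemble into one constant $r=l_\infty+k/x_\infty$, which then serves as the common fixed point of $\delta_2$ and $\delta_3$, so that the successive errors $u_i-r$ telescope into an infinite product. Once that identity is in hand the convergence is automatic. The part I expect to require the most care is verifying, on the frozen-first-coordinate tail, that $y_j$ and $z_j$ actually grow (so that $\sum_j(\beta_j+\gamma_j)=\infty$ and the product collapses to $0$ rather than to a positive number); this is exactly where the hypothesis that $\delta_1$ occurs only finitely often is used, and it is also why only the two coordinates indexed by $\{1,2,3\}\setminus\{1\}$ converge, since $l_j$ itself stays at $l_\infty$, which in general differs from $r$.
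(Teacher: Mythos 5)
Your proposal is correct, and it lands on the same explicit limit as the paper, but the computational engine is different. The paper's proof makes the same initial reduction (assume $i=1$; after the last occurrence of $1$ the reduced tail is forced to alternate $2,3,2,3,\dots$, so the first coordinates $x_j=x_s$ and $X_j=X_s$ freeze), but then simply writes closed-form expressions for the tail triples, namely $\mcM^{\mfw^{(s)}_n}(x_s,y_s,z_s)$ has moving entries $nx_s+z_s$ and $(n-1)x_s+z_s$, while $\mcM^{\mfw^{(s)}_n}_k(X_s,Y_s,Z_s)$ has moving entries $nX_s+Z_s+nk$ and $(n-1)X_s+Z_s+(n-1)k$, and reads off $\lim_n \frac{nX_s+Z_s+nk}{nx_s+z_s}=\frac{X_s+k}{x_s}=q$ directly. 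You instead work with the comparison-triple dynamics from Section 4: your identity $\beta l_\infty+\frac{k}{x_\infty+z}=\beta\bigl(l_\infty+\frac{k}{x_\infty}\bigr)$ is correct, it exhibits $r=l_\infty+k/x_\infty$ as a common fixed point of the frozen-first-coordinate versions of $\delta_2,\delta_3$, and the telescoped error $|u_i-r|=|n_N-r|\prod_j(1-\beta_j)(1-\gamma_j)$ tends to $0$ because $y_j,z_j$ grow linearly on the tail, so $\sum_j(\beta_j+\gamma_j)=\infty$ and the infinite product collapses. Since $r=\frac{X_N+k}{x_N}$, this agrees with the paper's value of $q$. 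What each approach buys: the paper's computation is shorter and entirely elementary, since the two-letter tail admits exact formulas; your fixed-point/infinite-product argument is slightly more robust (it needs only the linear growth of $y_j,z_j$, not the exact closed forms) and it integrates the case naturally into the comparison-triple framework used for \Cref{prop:comparison-convergence}, at the cost of some bookkeeping with the indices $u_i,v_i$. One small interpretive quibble: the finiteness of occurrences of the index $1$ is used chiefly to force the alternating tail and freeze $l_j$ at $l_\infty$; the divergence of $\sum_j(\beta_j+\gamma_j)$ would hold in any case where the relevant classical coordinates grow, so it is not really ``where'' that hypothesis enters.
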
 

\begin{proof}
  Without loss of generality, we might assume that $i=1$ and after $w_l=1$, the subsequence is $\mfw^{(s)}=[w_{s+1},w_{s+2},w_{s+3}\dots]=[2,3,2,\dots]$. We denote the finite subsequence of $\mfw^{(s)}$ by $\mfw^{(s)}_n=[w_{s+1},w_{s+2},\dots,w_{s+n}]$. Hence, we have 
  \begin{align}
  	\mcM^{\mfw^{(s)}_n}(x_s,y_s,z_s)=\left\{ \begin{array}{cc}
 (x_s,nx_s+z_s,(n-1)x_s+z_s),\ \ \text{if $n$ is odd} \\
 (x_s,(n-1)x_s+z_s,nx_s+z_s),	\ \ \text{if $n$ is even}
 \end{array}
  	 \right.
  \end{align}  and \begin{align}
  	\mcM^{\mfw^{(s)}_n}_k(X_s,Y_s,Z_s)=\left\{ \begin{array}{cc}
 (X_s,nX_s+Z_s+nk,(n-1)X_s+Z_s+(n-1)k),\ \ \text{if $n$ is odd} \\
 (X_s,(n-1)X_s+Z_s+(n-1)k,nX_s+Z_s+nk),	\ \ \text{if $n$ is even}
 \end{array}
  	 \right. .
  \end{align}
 Thus, we can directly calculate and obtain that  
  \begin{align}
  	\lim_{n\rightarrow +\infty}\frac{nX_s+Z_s+nk}{nx_s+z_s}=\frac{X_s+k}{x_s},
  \end{align} which implies that 
  \begin{align}
  	 \lim_{n\rightarrow +\infty}  \dfrac{X_2^{\mfw_n}}{x_2^{\mfw_n}}= \lim_{n\rightarrow +\infty}  \dfrac{X_3^{\mfw_n}}{x_3^{\mfw_n}}=\frac{X_s+k}{x_s}=q.
  \end{align} Hence, the cases that $i=2,3$ are similar and the proposition holds.
\end{proof}

By combining \Cref{prop:comparison-convergence} and \Cref{prop:comparison-convergence-two}, we derive the following theorem.

\begin{theorem}\label{thm:comparison-convergence}
   Take any $k$-generalized Euclid tree $\mcK$ with the initial triple $(A,B,C)$ and the classical Euclid tree $\mcE$ with the initial triple $(a,b,c)$. Let $\mfw=[w_1,
  \dots,w_n,\dots]$ be an infinite reduced mutation sequence. Then, the statements as follows hold:
  \begin{enumerate}
  	\item If $1,2,3$ all appear infinitely many times in $\mfw$, then there exists a real number $q$, such that each component of the triple in $\mcK$ converges to $q$ times of the corresponding component of the triple in $\mcE$ when $n$ goes infinity.
  	\item If one index $i$ of $\{1,2,3\}$ appears only finitely many times in $\mfw$, then there exists a real number $q$, such that two components of the triple indexed by $\{1,2,3\}\backslash \{i\}$ in $\mcK$ converge to $q$ times of the corresponding components of the triple in $\mcE$ when $n$ goes infinity.
  \end{enumerate}
  \end{theorem}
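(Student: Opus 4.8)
The plan is to recognize that \Cref{thm:comparison-convergence} is precisely the disjunction of the two cases already established in \Cref{prop:comparison-convergence} and \Cref{prop:comparison-convergence-two}, so at the top level the proof is a single case split on the mutation sequence $\mfw$: either all of $1,2,3$ occur infinitely often in $\mfw$, which is the hypothesis of \Cref{prop:comparison-convergence} and yields part (1); or some index $i$ occurs only finitely often, which is the hypothesis of \Cref{prop:comparison-convergence-two} and yields part (2). These two alternatives exhaust all infinite reduced sequences, so nothing further is required at this level.

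For completeness I would recall the internal structure of each piece, phrased through the comparison triples $(l_j,m_j,n_j)$ of \Cref{def: Comparison triple}. In part (1): \Cref{lem:fibonacci-estimate} shows that along a $3$-cyclic subsequence the mutated classical Euclid components dominate the Fibonacci numbers, so that — since $\sum 1/F_n$ converges by \Cref{prop: Fibonacci converge} — the quantity $\max(l_j,m_j,n_j)$ stays bounded (this is \Cref{prop:comparison-bounded}); as $\min(l_j,m_j,n_j)$ is monotonically nondecreasing and now bounded, it converges to some real number $q$ by the monotone convergence theorem (\Cref{cor: min converge}); and then, using \Cref{lem: internal condition}, \Cref{lem: total length} and \Cref{lem: total length bound}, one shows that once $x_j,y_j,z_j>k/\epsilon$ the three pairwise gaps eventually all drop below $2\epsilon$, squeezing $l_j,m_j,n_j$ onto the common limit $q$. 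In part (2): after the last occurrence of $i$, say $i=1$, the tail of $\mfw$ is an alternating sequence $[\dots,2,3,2,3,\dots]$, along which both $\mcM^{\mfw}$ and $\mcM^{\mfw}_k$ are given by explicit affine recursions, and a direct computation gives $\lim_n X_j^{\mfw_n}/x_j^{\mfw_n}=(X_s+k)/x_s$ for $j\neq 1$ and a suitable index $s$.

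The only delicate ingredient is the squeezing step in part (1), which is entirely carried out inside the proof of \Cref{prop:comparison-convergence}: a $\delta_2$-type mutation merely relocates the internal division point without shrinking the total interval length, so one must rule out an infinite run of such \emph{neutral} mutations — the Case $(1.1)$/Case $(1.2)$ alternation argument — and then use that $\delta_1,\delta_2,\delta_3$ each recur infinitely often to guarantee that a contracting mutation eventually occurs. Since that analysis is already in place, assembling \Cref{thm:comparison-convergence} from the two propositions presents no further obstacle.
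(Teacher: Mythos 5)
Your proposal matches the paper exactly: the paper states \Cref{thm:comparison-convergence} as the immediate combination of \Cref{prop:comparison-convergence} (all indices recur infinitely often) and \Cref{prop:comparison-convergence-two} (one index occurs finitely often), with no additional argument needed, and your recapitulation of the internal machinery — the Fibonacci bound via \Cref{prop: Fibonacci converge} giving \Cref{prop:comparison-bounded}, the monotone convergence of the minimum in \Cref{cor: min converge}, the interval-shrinking argument through \Cref{lem: internal condition}, \Cref{lem: total length}, \Cref{lem: total length bound}, and the explicit affine tail computation yielding $q=(X_s+k)/x_s$ in case (2) — is faithful to the paper's proofs of those propositions. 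The proof is correct and takes essentially the same route.
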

\begin{conjecture}
	For the case $(2)$ in \Cref{thm:comparison-convergence}, it is clear that $q\in \mbQ_+$. On the other hand, for the case $(1)$ with $k\in \mbN_+$, we conjecture that $q\in \mbR_+\backslash\mbQ_+$.
\end{conjecture}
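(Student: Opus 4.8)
The first assertion is in fact already a theorem, so I would dispose of it in one line: in case $(2)$ of \Cref{thm:comparison-convergence}, the proof of \Cref{prop:comparison-convergence-two} exhibits the limit as $q=\frac{X_s+k}{x_s}$, where $s$ is the last index at which the deficient direction is used and $(X_s,Y_s,Z_s)\in\mcK$, $(x_s,y_s,z_s)\in\mcE$ are the corresponding triples. When $k\in\mbN_+$ and the initial triples lie in $\mbZ_+^3$ (as in the generalized Markov situation, where one starts from $(1,1,1)$), every entry generated by the mutations $\mcM_{i;k}$ and $\mcM_i$ is a positive integer, so $q\in\mbQ_+$ immediately.

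For case $(1)$ the plan is first to make $q$ explicit. Along the sequence $\mfw$, the classical Euclid triple $(x_j,y_j,z_j)^{\top}$ is obtained from $(a,b,c)^{\top}$ by left multiplication with the integer matrices $M_{w_1},\dots,M_{w_j}\in\text{Mat}_{3\times 3}(\mbZ)$ read off from \eqref{eq: classical}, while the $k$-generalized triple evolves by the affine maps with the same linear part and inhomogeneous term $k$. Unwinding the recursion gives $X_j=L_j(A,B,C)+k\,R_j$, where the coefficients of the linear form $L_j$ and the positive integer $R_j$ are entries of partial products $M_{w_j}\cdots M_{w_{t+1}}$. Dividing by $x_j$ and letting $j\to\infty$ — legitimate by \Cref{prop:comparison-convergence} — presents $q$ as a convergent series $q=q_0+k\sum_{t\geq 1}c_t$, in which $q_0$ is a ratio of the initial data (hence rational) and each $c_t>0$ is a ratio of entries of a product of the three fixed matrices $M_1,M_2,M_3$ determined by the tail of $\mfw$. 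The goal is then to prove that this series is irrational for every $k\in\mbN_+$.

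The obvious tool is an elementary irrationality criterion for series of reciprocals: $\sum 1/a_t$ with $a_t\in\mbN_+$ is irrational once $a_{t+1}\geq a_t^{2}-a_t+1$ holds eventually. But along any $3$-cyclic portion of $\mfw$ the relevant denominators are Fibonacci-like, so they grow only exponentially and this criterion fails completely; I expect this to be exactly the main obstacle. A genuine argument would have to use the arithmetic of the three matrices $M_1,M_2,M_3$ themselves — for instance by controlling the $p$-adic valuations (for $p\mid k$, or $p=2$) of the partial sums $q_0+k\sum_{t\leq T}c_t$ so that no relation $Nq\in\mbZ$ can survive, or by interpreting $q$ as a special value of a transfer-operator or dynamical zeta function attached to the subshift carrying $\mfw$ and importing a transcendence result. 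Failing a clean structural input of this kind, the statement must remain conjectural; the computations of \Cref{example}, and in particular \Cref{ex: Fibonacci case}, are what I would rely on as the supporting evidence and as the source of the heuristic that $q$ is never rational.
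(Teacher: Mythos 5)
The statement you were given is a conjecture: the paper supplies no proof of the irrationality claim in case $(1)$, only the motivating computations culminating in \Cref{ex: Fibonacci case}, so your conclusion that this half must remain conjectural is exactly the paper's own position and there is nothing to compare beyond that. Your one-line treatment of the first assertion coincides with the paper's implicit reasoning: \Cref{prop:comparison-convergence-two} exhibits the limit as $q=\frac{X_s+k}{x_s}$, which is rational once $k$ and the initial triples are rational; your integrality caveat on the initial data is, if anything, more careful than the paper's bare ``it is clear'' (in the paper's general setting the initial triples are only assumed real, so some such hypothesis is genuinely needed). One warning about your sketch for case $(1)$: in the proposed decomposition $q=q_0+k\sum_{t\geq 1}c_t$, the quantity $q_0=\lim_j L_j(A,B,C)/L_j(a,b,c)$ is \emph{not} a ratio of the initial data and need not be rational even for integer initial triples --- it is determined by the limiting projective direction of the relevant row of the products $M_{w_j}\cdots M_{w_1}$, which for an infinite word using all three matrices is itself generically irrational. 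So the reduction ``rational base point plus a $k$-weighted series'' is not available as stated; since you claim no proof of the irrationality anyway, this does not change the overall verdict, but it should be corrected if you pursue that route.
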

In the following, we give an example of  \Cref{thm:comparison-convergence} to illustrate how the comparison triples behave.
\begin{example}
  Consider the $7$-generalized Euclid tree $\mcK$ starting at $(1,4,9)$, and suppose that the classical Euclid tree $\mcE$ starts at $(1,1,1)$.
  Now, we take the mutation sequence $\mfw=[1,2,1,2,3,1,2,1,3,1,2,1,2]$ and we obtain a chain of triples in $\mcE$ as follows:
  \begin{align*}
  \begin{array}{ll}
    (1,1,1) \stackrel{\mcM_1}{\longrightarrow} (2,1,1) \stackrel{\mcM_2}{\longrightarrow} (2,3,1) \stackrel{\mcM_1}{\longrightarrow} (4,3,1) \stackrel{\mcM_2}{\longrightarrow} (4,5,1) \stackrel{\mcM_3}{\longrightarrow} \color{red} (4,5,9) \color{black} \stackrel{\mcM_1}{\longrightarrow} \\ (14,5,9) \stackrel{\mcM_2}{\longrightarrow} (14,23,9) 
    \stackrel{\mcM_1}{\longrightarrow} (32,23,9) \stackrel{\mcM_3}{\longrightarrow} \color{red} (32,23,55) \color{black} \stackrel{\mcM_1}{\longrightarrow} (78,23,55)  \stackrel{\mcM_2}{\longrightarrow} \\  (78,133,55) \stackrel{\mcM_1}{\longrightarrow} 
    (188,133,55) \stackrel{\mcM_2}{\longrightarrow} (188,243,55) 
    \end{array}
  \end{align*}
  The corresponding triples along $\mfw$ in $\mcK$ are given by:
  \begin{align*}
  \begin{array}{ll}
     (1,4,9) \stackrel{\mcM_{1;7}}{\longrightarrow} (20,4,9) \stackrel{\mcM_{2;7}}{\longrightarrow} (20,36,9) \stackrel{\mcM_{1;7}}{\longrightarrow} (52,36,9) \stackrel{\mcM_{2;7}}{\longrightarrow} (52,68,9) \stackrel{\mcM_{3;7}}{\longrightarrow} \color{red} (52,68,127) \color{black} \stackrel{\mcM_{1;7}}{\longrightarrow} \\  (202,68,127) \stackrel{\mcM_{2;7}}{\longrightarrow} (202,336,127) \stackrel{\mcM_{1;7}}{\longrightarrow} (470,336,127) \stackrel{\mcM_{3;7}}{\longrightarrow} \color{red} (470,336,813) \color{black} \stackrel{\mcM_{1;7}}{\longrightarrow} (1156,336,813) \\  \stackrel{\mcM_{2;7}} {\longrightarrow} (1156,1976,813) \stackrel{\mcM_{1;7}}{\longrightarrow} (2796,1976,813) \stackrel{\mcM_{2;7}}{\longrightarrow} (2796,3616,813) 
    \end{array}
  \end{align*}
  Finally, the comparison triples along $\mfw$ are given by:
  \begin{align*}
  \begin{array}{ll}
    (1,4,9) \stackrel{\delta_1}{\longrightarrow} (10,4,9) \stackrel{\delta_2}{\longrightarrow}  (10,12,9) \stackrel{\delta_1}{\longrightarrow}  (13,12,9) \stackrel{\delta_2}{\longrightarrow}  (13,13.6,9) \stackrel{\delta_3}{\longrightarrow}  \color{red} (13,13.6,14.11)  \color{black} \stackrel{\delta_1}{\longrightarrow} \\
     (14.43,13.6,14.11)\stackrel{\delta_2}{\longrightarrow}  (14.43,14.61,14.11) \stackrel{\delta_1}{\longrightarrow}  (14.69,14.61,14.11) \stackrel{\delta_3}{\longrightarrow}  \color{red} (14.69,14.61,14.78) \color{black} \stackrel{\delta_1}{\longrightarrow}\\
      (14.82,14.61,14.78) \stackrel{\delta_2}{\longrightarrow}  (14.82,14.86,14.78) \stackrel{\delta_1}{\longrightarrow}  (14.87,14.86,14.78) \stackrel{\delta_2}{\longrightarrow}  (14.87,14.88,14.78)
    \end{array}
  \end{align*}
  As we can see, the first and second components of the comparison triples get closer to each other as the mutations $\delta_1$ and $\delta$ alternately appear. On the other hand, once we process the mutation $\delta_3$, the third components of the comparison triples also become closer to the others.
\end{example}

\section{Asymptotic phenomenon of the logarithmic generalized Markov tree}\label{asym of LGMT}

In this section, we study the properties of the generalized Markov tree. Our main result is that after taking the logarithm of the generalized Markov tree, it converges to the classical Euclid tree up to a scalar multiple, which is similar as \Cref{thm:comparison-convergence}.


\subsection{Ratio number sequence}\label{sub: Ratio number sequence}

In this subsection, we aim to exhibit the asymptotic behavior of the generalized Markov triples.

Firstly, let us discuss the case when the last mutation of $\mfw$ is $\mu_1$, that is $\mfw=[w_1\dots,1]$. Assume that $a\neq  \max(a,b,c)$ and we get the next triple by $(a_1,b_1,c_1)=\mu_{1}(a,b,c) \coloneq (k_1bc,b,c)$.  Next, by processing $\mu_2$ to the triple $(k_1bc,b,c)$, we obtain that
\begin{align}
  (a_2,b_2,c_2) \coloneq \mu_2(k_1bc,b,c) = (k_1bc,\dfrac{k_1^2b^2c^2 + \lambda_2 k_1bc^2 + c^2}{b}, c).	
\end{align} 
Comparing the mutated number $b_2$ with the other two invariant numbers, that is $a_2=a_1, c_2=c_1$, we have
\begin{align}
  k_2 \coloneq \dfrac{b_2}{a_2 c_2} = \dfrac{k_1^2b^2c^2 + \lambda_2 k_1bc^2 + c^2}{k_1b^2c^2} = k_1 + \dfrac{\lambda_2}{b} + \dfrac{1}{k_1b^2} .
\end{align}
Also, if we process $\mu_3$ to the triple $(k_1bc, b, c)$, we have $(a_2,b_2,c_2) \coloneq \mu_3(k_1bc,b,c)= (a_1,b_1,k_2a_1b_1)$, where \begin{align}k_2= k_1 + \dfrac{\lambda_3}{c} + \dfrac{1}{k_1c^2}.\end{align}
Similarly, the statements for the cases that the last mutation is $\mu_2$ or $\mu_3$ also hold, that is the reduced mutation sequence is $\mfw=[w_1,\dots,2]$ or $\mfw=[w_1,\dots,3]$. 

In this way, given an arbitrary infinite reduced mutation sequence $\mfw$ with $|\mfw|=+\infty$, we can associate it with a number sequence $\{k_j\}_{j=1}^{+\infty}$. Here, we call it the \emph{ratio number sequence}. In fact, starting at the initial solution $(1,1,1)$ and by the induction, we can prove that $k_i> 1$ for any $i\in \mbN$. Also, note that by \Cref{lem: monotonicity}, the ratio sequence will become stable if the times of mutations are large enough, that is $k_i \approx k_{i+1} (i>>0)$. By a direct calculation and induction, we have the following lemma.
\begin{lemma}\label{lem: ratio increase}
	The ratio number sequence $\{k_j\}_{j=1}^{+\infty}$ is a strictly increasing sequence.
\end{lemma}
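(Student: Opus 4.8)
The plan is to establish the apparently stronger statement $k_j < k_{j+1}$ for every $j \ge 1$, by a one-line computation with the mutation formula \eqref{mutation rule}; strict monotonicity of the whole sequence is then immediate. The first ingredient is a purely combinatorial one. Writing $T_j := \mu^{\mfw_j}(1,1,1)$, the triple $T_1 = \mu_{w_1}(1,1,1)$ has its $w_1$-th coordinate equal to $\lambda_{w_1}+2 \ge 2$, so $T_1 \neq (1,1,1)$; applying \Cref{lem: monotonicity}(2) inductively along $\mfw$ then shows that, for every $j \ge 1$, the triple $T_j$ has a \emph{unique} maximal component, located in the $w_j$-th coordinate. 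Consequently $k_j$ is exactly this maximal coordinate divided by the product of the other two, all three coordinates being positive since $\mu_i\colon \mbQ_+^3 \to \mbQ_+^3$, and since $\mfw$ is reduced we have $w_{j+1}\ne w_j$.

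Now I would do the one-step computation. Fix $j \ge 1$; by the $\mathfrak{S}_3$-symmetry among the three coordinates we may assume the maximum of $T_j$ sits in the first slot, so $T_j=(p,q,r)$ with $p=\max(p,q,r)$ and $k_j=\frac{p}{qr}$. As $w_{j+1}\in\{2,3\}$ there are two cases. If $w_{j+1}=2$, then \eqref{mutation rule} gives $T_{j+1}=\bigl(p,\tfrac{p^2+\lambda_2 pr+r^2}{q},r\bigr)$; the first and third coordinates are unchanged, so
\begin{align*}
 k_{j+1} \;=\; \frac{1}{pr}\cdot\frac{p^2+\lambda_2 pr+r^2}{q} \;=\; \frac{p}{qr}+\frac{\lambda_2}{q}+\frac{r}{pq} \;=\; k_j+\frac{\lambda_2}{q}+\frac{r}{pq}.
\end{align*}
Since $\lambda_2 \in \mbZ_{\ge 0}$ and $p,q,r \in \mbR_+$, the increment $\tfrac{\lambda_2}{q}+\tfrac{r}{pq}$ is strictly positive, hence $k_{j+1}>k_j$. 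The case $w_{j+1}=3$ is the mirror image, with $T_{j+1}=\bigl(p,q,\tfrac{p^2+\lambda_3 pq+q^2}{r}\bigr)$ and $k_{j+1}=k_j+\tfrac{\lambda_3}{r}+\tfrac{q}{pr}>k_j$. Chaining these inequalities yields $k_1<k_2<k_3<\cdots$, which is the assertion.

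I do not anticipate a real obstacle: the heart of the argument is a single algebraic identity in each of the two (equivalent) directions. The only point demanding care is the bookkeeping of which coordinate carries the unique maximum after each mutation — this is precisely what \Cref{lem: monotonicity} provides, and it is what makes the expression $k_j = (\text{max coordinate})/(\text{product of the other two})$ meaningful and, more importantly, guarantees that the two coordinates in the denominator of $k_{j+1}$ are exactly the two coordinates of $T_j$ left untouched by $\mu_{w_{j+1}}$ (still positive), which is what legitimises the telescoping displayed above. Reducedness of $\mfw$ is used only to ensure $w_{j+1}\ne w_j$, so that these ``other two'' coordinates genuinely do not move.
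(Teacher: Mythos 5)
Your proposal is correct and follows essentially the same route as the paper: the paper's own justification is exactly the one-step computation you perform (it records $k_{j+1}=k_j+\tfrac{\lambda_i}{\ast}+\tfrac{1}{k_j\ast^{2}}$ in the paragraph preceding the lemma and then concludes "by a direct calculation and induction"), with \Cref{lem: monotonicity} likewise guaranteeing that the mutated coordinate is the unique maximum so that the ratio telescopes. Your write-up merely makes explicit the bookkeeping the paper leaves implicit, so no further changes are needed.
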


In conclusion, we have the following observation of the asymptotic transitive behavior of the generalized Markov triples.
\begin{observation}\label{p}
  Suppose that a generalized Markov triple $(a,b,c)$ is associated with the mutation sequence $\mfw$, such that $1,2,3$ all appear repeatedly in $\mfw$, then there exists a natural number $k_{\lambda}=3+\lambda_1+\lambda_2+\lambda_3$, such that the triples after $(a,b,c)$ are approximately of the form
  \begin{align}
    \left\{ 
    \begin{array}{ccc}
    \mu_1(a_i,b_i,c_i) &\approx& (k_{\lambda}b_ic_i, b_i, c_i) \\
    \mu_2(a_i,b_i,c_i) &\approx& (a_i, k_{\lambda}a_ic_i, c_i) \\
 \mu_3(a_i,b_i,c_i) &\approx& (a_i, b_i, k_{\lambda}a_ib_i)
    \end{array}
    \right.,\label{eq: app}
  \end{align}
  where the triple $(a_i,b_i,c_i)$ is the one obtained by processing an arbitrary composition of cluster mutations on $(a,b,c)$. 
\end{observation}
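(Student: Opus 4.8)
The plan is to reduce everything to one elementary identity. Dividing the generalized Markov equation \eqref{eq: GME} by $X_1X_2X_3$ gives, for every positive solution,
\begin{align*}
  \frac{X_1}{X_2X_3}+\frac{X_2}{X_3X_1}+\frac{X_3}{X_1X_2}+\frac{\lambda_1}{X_1}+\frac{\lambda_2}{X_2}+\frac{\lambda_3}{X_3}=3+\lambda_1+\lambda_2+\lambda_3=k_\lambda .
\end{align*}
First I would record a structural fact: along a reduced mutation sequence $\mfw=[w_1,w_2,\dots]$ every mutation \emph{enlarges} a coordinate. Indeed $\mu_{w_1}(1,1,1)$ makes the $w_1$-th coordinate maximal, and an induction using \Cref{lem: monotonicity} together with $w_{j+1}\neq w_j$ shows that just before applying $\mu_{w_{j+1}}$ the $w_{j+1}$-th coordinate is \emph{not} maximal, so $\mu_{w_{j+1}}$ strictly increases it and it becomes the new maximum. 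Hence each of the three coordinates of the triples along $\mfw$ is non-decreasing, the maximum strictly increases, and because $1,2,3$ each occur infinitely often every coordinate equals that (unbounded) maximum infinitely often; therefore all three coordinates of the triples along $\mfw$ tend to $+\infty$.

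Next I would feed this into the ratio number sequence $\{k_j\}$. If $(a_{j+1},b_{j+1},c_{j+1})=\mu_{w_{j+1}}(a_j,b_j,c_j)$ and $k_{j+1}$ is the ratio of its new (maximal) $w_{j+1}$-th coordinate to the product of the other two, then evaluating the identity above on the triple $(a_{j+1},b_{j+1},c_{j+1})$ and bounding the two reciprocal-product terms by reciprocals of the non-maximal coordinates gives
\begin{align*}
  0<k_\lambda-k_{j+1}\ \leq\ \frac{1+\lambda_1}{a_{j+1}}+\frac{1+\lambda_2}{b_{j+1}}+\frac{1+\lambda_3}{c_{j+1}} .
\end{align*}
Since every coordinate blows up, $k_j\to k_\lambda$; this is exactly \Cref{thm: converge}(1), and it is consistent with the increment formula $k_{j+1}=k_j+\frac{\lambda_{w_{j+1}}}{(\,\cdot\,)}+\frac{1}{k_j(\,\cdot\,)^2}$ derived just before the statement together with the monotonicity of \Cref{lem: ratio increase}.

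Finally I would assemble the conclusion. Every mutation occurring along $\mfw$ replaces the (non-maximal) $w_{j+1}$-th coordinate of $(a_j,b_j,c_j)$ by $k_{j+1}\cdot(\text{product of the other two})$; once the triple lies far enough out along $\mfw$ the estimate above makes $k_{j+1}$ as close to $k_\lambda$ as desired, so the three mutation rules take the asserted form \eqref{eq: app} — with the understanding that the $t$-th line applies to those triples whose $t$-th coordinate is not maximal, i.e.\ precisely to the mutations that actually occur along $\mfw$, the ``backward'' mutation that shrinks the current maximum not being of this shape. The only step carrying real content is the convergence $k_j\to k_\lambda$, and within it the least routine ingredient is the blow-up of all three coordinates, which leans on \Cref{lem: monotonicity} and the induction above. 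The genuine obstacle — and the reason this is stated as an Observation with the informal symbol $\approx$ — is the absence of a uniform rate: how fast $k_j$ reaches $k_\lambda$ depends on how quickly the coordinates grow, hence on the combinatorics of $\mfw$, so no single error bound can be attached. A rigorous version of the phenomenon is \Cref{thm: generalized Markov tree}, obtained later by passing, via logarithms, to the $k$-generalized Euclid tree.
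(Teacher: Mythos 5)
Your argument is correct and follows essentially the same route as the paper: the paper justifies the observation through the ratio-number discussion of \Cref{sub: Ratio number sequence} together with \Cref{lem: monotonicity}, and makes it rigorous in \Cref{thm: converge}(1) by dividing the generalized Markov equation by $x_1x_2x_3$ and using the blow-up of all three coordinates, which is exactly your identity and bound (your caveat that only the ``forward'' mutations have this shape is likewise the paper's Remark following the observation). No substantive difference.
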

\begin{remark}
	 Note that for each generalized Markov triple $(a,b,c)$, only two equalities in \eqref{eq: app} will appear. In fact, it is direct by \Cref{lem: monotonicity}  because the cluster mutation is involutive and we always assume that $\mfw$ is reduced.
\end{remark}
\begin{example}
  Let us consider the classical Markov triple $(194,13,5)$ which is obtained from the composition of cluster mutations on $(1,1,1)$, that is 
  $(194,13,5)= \mu_1\circ \mu_2\circ \mu_3\circ \mu_2(1,1,1)$. Then, we draw a branch of Markov tree after the triple $(194,13,5)$ as the following. 
\begin{figure}[htpb] 
  \tikzset{every picture/.style={line width=0.75pt}} 
  \begin{tikzpicture}[x=0.75pt,y=0.75pt,yscale=-1,xscale=1]
  \path (50,90); 
  \draw    (212.4,147.6) -- (247.17,132.21) ;
  \draw [shift={(249,131.4)}, rotate = 156.12] [color={rgb, 255:red, 0; green, 0; blue, 0 }  ][line width=0.75]    (10.93,-3.29) .. controls (6.95,-1.4) and (3.31,-0.3) .. (0,0) .. controls (3.31,0.3) and (6.95,1.4) .. (10.93,3.29)   ;
  \draw    (213.2,160.8) -- (248.79,177.74) ;
  \draw [shift={(250.6,178.6)}, rotate = 205.45] [color={rgb, 255:red, 0; green, 0; blue, 0 }  ][line width=0.75]    (10.93,-3.29) .. controls (6.95,-1.4) and (3.31,-0.3) .. (0,0) .. controls (3.31,0.3) and (6.95,1.4) .. (10.93,3.29)   ;
  \draw    (380.6,127) -- (426.67,117.13) ;
  \draw [shift={(428.6,116.6)}, rotate = 164.69] [color={rgb, 255:red, 0; green, 0; blue, 0 }  ][line width=0.75]    (10.93,-3.29) .. controls (6.95,-1.4) and (3.31,-0.3) .. (0,0) .. controls (3.31,0.3) and (6.95,1.4) .. (10.93,3.29)   ;

  \draw (130,146) node [anchor=north west][inner sep=0.75pt]   [align=left] {(194,13,5)};
  \draw (270,122) node [anchor=north west][inner sep=0.75pt]   [align=left] {(194,2897,5)};
  \draw (270,169.6) node [anchor=north west][inner sep=0.75pt]   [align=left] {(194,13,7561)};
  \draw (216,120) node [anchor=north west][inner sep=0.75pt]   [align=left] {$\mu_2$};
  \draw (215,170) node [anchor=north west][inner sep=0.75pt]   [align=left] {$\mu_3$};
  \draw (435,107.2) node [anchor=north west][inner sep=0.75pt]   [align=left] {(43261,2897,5)};
  \draw (396,102) node [anchor=north west][inner sep=0.75pt]   [align=left] {$\mu_1$};
  \draw (400,175) node [anchor=north west][inner sep=0.75pt]   [align=left] {$\cdots$};
  \end{tikzpicture}
  	\caption{A local branch of the classical Markov tree}
  	\label{fig: Markov tree}
\end{figure}
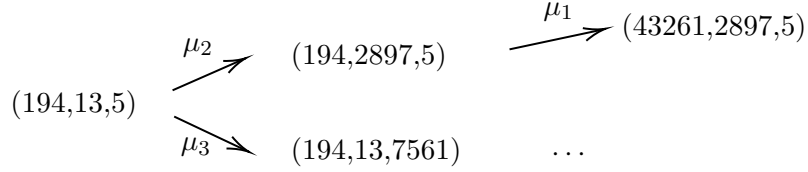 
  Let $(X,Y,Z)\coloneq (194,13,5)$, $(X_1,Y_1,Z_1) \coloneq \mu_2(X,Y,Z),(X_2,Y_2,Z_2) \coloneq \mu_1\circ \mu_2(X,Y,Z)$ and $(X_3,Y_3,Z_3) \coloneq \mu_3(X,Y,Z)$.
  By doing some simple calculation, we have \begin{align*}\frac{X}{YZ} \approx 2.98461538,\ \frac{Y_1}{X_1Z_1} \approx 2.98659794,\ \frac{X_2}{Y_2Z_2} \approx 2.98660683,\ \frac{Z_3}{X_3Y_3} \approx 2.99801745.\end{align*}
  One can observe that the phenomenon of \cref{p} has already occurred even when the length of the mutation sequence is $|\mfw|=5$, that is 
  \begin{align}
    \left\{ 
    \begin{array}{ccc}
    \mu_2(X,Y,Z) &\approx& (X,3XZ,Z) \\
    \mu_3(X,Y,Z) &\approx& (X,Y,3XY) \\
    \mu_1(X_1,Y_1,Z_1) &\approx& (3Y_1Z_1,Y_1,Z_1)
    \end{array}
    \right. .
  \end{align}
 Hence, for the Markov equation, we have $\lambda_1=\lambda_2=\lambda_3=0$ and $k_{\lambda}=3$.
\end{example}

\subsection{Generalized Euclid tree arising from logarithmic generalized Markov tree}

We have seen in the previous subsection that the cluster mutations on generalized Markov triples are approximately multiplications when the length of mutation sequence $\mfw$ is large enough. Then, it is natural to investigate the behavior of cluster mutations when we take logarithm of the generalized Markov tree, whose generalized Markov triples $(a,b,c)$ are replaced by $(\log(a),\log(b),\log(c))$.
We call such tree the \emph{logarithmic generalized Markov tree}.

In the following, we will see that the asymptotic phenomenon between the logarithmic generalized Markov tree and the $k$-generalized Euclid tree.

For brevity, we denote by $\widebar{x} \coloneq \log(x)$ for any positive real number $x$. Therefore, we have \begin{align}
 	(\widebar{a},\widebar{b},\widebar{c})\coloneq (\log(a),\log(b),\log(c)).
 \end{align}
Take any mutation chain from the generalized Markov tree. For example, if we take the mutation chain as 
\begin{align}
  (a_0, b_0, c_0) \xrightarrow{\mu_1} (a_1,b_1,c_1) \xrightarrow{\mu_2} (a_2,b_2,c_2) \xrightarrow{\mu_3} (a_3,b_3,c_3) \cdots 
\end{align} 
Then, the corresponding mutation chain in the logarithmic generalized Markov tree can be written as 
\begin{align}
  (\widebar{a_0}, \widebar{b_0}, \widebar{c_0}) \xrightarrow{\widebar{\mu_1}} (\widebar{a_1},\widebar{b_1},\widebar{c_1}) \xrightarrow{\widebar{\mu_2}} (\widebar{a_2},\widebar{b_2},\widebar{c_2}) \xrightarrow{\widebar{\mu_3}} (\widebar{a_3},\widebar{b_3},\widebar{c_3}) \cdots 
\end{align}
Here, we replace $\mu_i$ by $\widebar{\mu_i}$ to denote the mutation in the logarithmic generalized Markov tree.

More generally, given any generalized Markov triple $(a_i,b_i,c_i)$ with $a_i \neq \max(a_i,b_i,c_i)$.
Recall that according to \Cref{sub: Ratio number sequence}, if we mutate at $\mu_1$, we have \begin{align}(a_{i+1},b_{i+1},c_{i+1})\coloneq \mu_1(a_i,b_i,c_i)= (k_{i+1}\times b_ic_i,b_i,c_i),\end{align} 
which implies that \begin{align}(\widebar{a_{i+1}}, \widebar{b_{i+1}},\widebar{c_{i+1}})= (\log(a_{i+1}),\log(b_{i+1}), \log(c_{i+1}))= (\widebar{k_{i+1}}+\widebar{b_{i}}+\widebar{c_{i}}, \widebar{b_{i}}, \widebar{c_i}).\end{align} Hence, the mutation $\widebar{\mu_1}$ can be written as 
\begin{align}
  \widebar{\mu_1}(\widebar{a_i}, \widebar{b_i}, \widebar{c_i}) = (\widebar{k_{i+1}}+\widebar{b_{i}}+\widebar{c_{i}}, \widebar{b_{i}}, \widebar{c_i}).
\end{align}
Note that the other mutations $\widebar{\mu_2}$ and $\widebar{\mu_3}$ have the same phenomenon as $\widebar{\mu_1}$.

On the other hand, we may obtain the approximation phenomenon as follows.
\begin{observation}\label{ob2}
	Let the conditions be same as \Cref{p} and denote by $\widebar{k_{\lambda}}=\log(k_{\lambda})$. Then, the relation \eqref{eq: app} is equivalent to 
	\begin{align}
		\left\{ 
    \begin{array}{ccc}
    \widebar{\mu_1}(\widebar{a_i},\widebar{b_i},\widebar{c_i}) &\approx& (\widebar{k_{\lambda}} + \widebar{b_i} + \widebar{c_i}, \widebar{b_i}, \widebar{c_i}) \\
    \widebar{\mu_2}(\widebar{a_i},\widebar{b_i},\widebar{c_i}) &\approx& (\widebar{a_i}, \widebar{k_{\lambda}} + \widebar{a_i} + \widebar{c_i}, \widebar{c_i}) \\
    \widebar{\mu_3}(\widebar{a_i},\widebar{b_i},\widebar{c_i}) &\approx& (\widebar{a_i}, \widebar{b_i}, \widebar{k_{\lambda}} + \widebar{a_i} + \widebar{b_i} )
    \end{array}
    \right. .\label{eq: log}
	\end{align} 
\end{observation}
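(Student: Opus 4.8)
The plan is to observe that \eqref{eq: log} is nothing more than the componentwise logarithm of \eqref{eq: app}, so the proof is a pure change of variables. First I would recall the definition of $\widebar{\mu_i}$: by construction, for a generalized Markov triple $(a_i,b_i,c_i)$ one has $\widebar{\mu_i}(\widebar{a_i},\widebar{b_i},\widebar{c_i}) = (\log a_{i+1},\log b_{i+1},\log c_{i+1})$, where $(a_{i+1},b_{i+1},c_{i+1}) = \mu_i(a_i,b_i,c_i)$. Thus the two displays describe the same underlying map read in different coordinates, related by the componentwise homeomorphism $(x,y,z)\mapsto(\log x,\log y,\log z)$ from $\mbR_+^3$ to $\mbR^3$.

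Next I would apply $\log$ to each of the three lines of \eqref{eq: app}. For the first line, $\mu_1(a_i,b_i,c_i)\approx(k_{\lambda}b_ic_i,b_i,c_i)$ becomes, after taking $\log$ componentwise and using $\log(k_{\lambda}b_ic_i)=\log k_{\lambda}+\log b_i+\log c_i=\widebar{k_{\lambda}}+\widebar{b_i}+\widebar{c_i}$, exactly the first line of \eqref{eq: log}; the second and third lines are handled identically after permuting the roles of the variables. Conversely, exponentiating each line of \eqref{eq: log} and using $e^{u+v}=e^ue^v$ recovers \eqref{eq: app}, so the two systems of relations are equivalent.

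The only point requiring a word of care is the informal symbol $\approx$, which by the conventions of the paper merely signals that the two sides are roughly equal with no quantitative bound attached. Since $\log\colon\mbR_+\to\mbR$ is a strictly increasing continuous bijection, it transports such an informal approximation faithfully in both directions, and this is really the entire content of the observation; in other words, there is no genuine obstacle here, the statement being a coordinate restatement of \Cref{p}. I would include it precisely because the additive form \eqref{eq: log} is the version that will be compared with the $k$-generalized Euclid tree \eqref{GMR} in the next results.
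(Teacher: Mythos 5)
Your proposal is correct and follows essentially the same route as the paper: the paper's own justification (in the text preceding the observation) is precisely the componentwise logarithm computation, writing $\widebar{\mu_i}$ additively via $\log(k\, b_i c_i)=\widebar{k}+\widebar{b_i}+\widebar{c_i}$, with the converse obtained by exponentiating; the informal symbol $\approx$ carries no quantitative content, so nothing further is needed.
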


In this way, the logarithmic generalized Markov tree is asymptotically the same as the $\widebar{k_{\lambda}}$-generalized Euclid tree. To justify this statement, essentially, we need to prove that the number sequence $\{k_j\}_{j=1}^{+\infty}$ converges to $k_{\lambda}=3+\lambda_1+\lambda_2+\lambda_3$.
\begin{theorem}\label{thm: converge}
	Let $\mfw=[w_1,\dots,w_n,\dots]$ be an infinite reduced mutation sequence and $\{k_j\}_{j=1}^{+\infty}$ be the ratio number sequence associated with $\mfw$. Then, the following statements hold:
	\begin{enumerate}
		\item If $1,2,3$ all appear infinitely many times in $\mfw$, then $\displaystyle \lim_{j\to +\infty} k_j = k_{\lambda}$.
		\item If one index $i$ of $\{1,2,3\}$ appears only finitely many times in $\mfw$, then there exists a real number $k_{\beta}$, such that $\displaystyle \lim_{j\to +\infty} k_j = k_{\beta}$.
	\end{enumerate}
\end{theorem}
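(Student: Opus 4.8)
The plan is to combine the explicit recursion for $\{k_j\}$ recorded in \Cref{sub: Ratio number sequence} with the identity obtained by dividing the generalized Markov equation \eqref{eq: GME} by the product of the three coordinates. Write $(a_j,b_j,c_j)=\mu^{\mfw_j}(1,1,1)$, let $M_j=\max(a_j,b_j,c_j)$, and let $v_j$ be the value --- just before the $(j+1)$-st mutation --- of the coordinate that this mutation changes. First I would record two elementary facts. (i) Each coordinate is non-decreasing in $j$: it changes only when we mutate in its direction, and by \Cref{lem: monotonicity} such a mutation sends a non-maximal coordinate to the new maximal one, which strictly exceeds the previous maximum; in particular $\{M_j\}$ is a strictly increasing sequence of positive integers, so $M_j\to\infty$. (ii) The computation in \Cref{sub: Ratio number sequence} gives, for every $j\ge 1$,
\begin{align}
  k_{j+1}=k_j+\frac{\lambda_{w_{j+1}}}{v_j}+\frac{1}{k_jv_j^2},\qquad k_j\ge 1,\quad v_j\ge 1.
\end{align}

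For part (1): since each of $1,2,3$ occurs infinitely often, each coordinate equals $M_j$ for infinitely many $j$, hence --- being non-decreasing with a divergent subsequence --- tends to $\infty$; thus $\min(a_j,b_j,c_j)\to\infty$. Dividing \eqref{eq: GME} by $a_jb_jc_j$, and using that $k_j$ is the maximal coordinate over the product of the other two, yields (when $a_j$ is maximal)
\begin{align}
  k_\lambda-k_j=\frac{b_j}{a_jc_j}+\frac{c_j}{a_jb_j}+\frac{\lambda_1}{a_j}+\frac{\lambda_2}{b_j}+\frac{\lambda_3}{c_j},
\end{align}
and symmetrically in the other two cases; each of the five terms on the right is $O\!\bigl(1/\min(a_j,b_j,c_j)\bigr)$, so $k_\lambda-k_j\to 0$ and $\lim_j k_j=k_\lambda$. (Summing the recursion also proves convergence, but identifying the limit as $k_\lambda$ still needs this identity, so I would use it directly.)

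For part (2): suppose the index $i$ occurs only finitely often; after its last occurrence $\mfw$ uses only the two remaining directions and, being reduced, strictly alternates between them, while the $i$-th coordinate is frozen at some constant $\alpha>0$. For $\ell$ large the direction mutated at step $\ell$ equals the one mutated at step $\ell-2$, so the coordinate it touches still carries the value $M_{\ell-2}$ assigned then, whence
\begin{align}
  M_\ell=\frac{M_{\ell-1}^2+\lambda_{w_\ell}M_{\ell-1}\alpha+\alpha^2}{M_{\ell-2}}\ \ge\ \frac{M_{\ell-1}^2}{M_{\ell-2}},
\end{align}
i.e. $M_\ell/M_{\ell-1}\ge M_{\ell-1}/M_{\ell-2}>1$. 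Hence the ratios $M_\ell/M_{\ell-1}$ are eventually non-decreasing and bounded below by some $\rho>1$, so $M_\ell$ grows at least geometrically and $\sum_\ell 1/M_\ell<\infty$. Since $v_j=M_{j-1}$ for $j$ large, summing the recursion gives $k_n\le k_{j_1}+(1+\max_i\lambda_i)\sum_{j\ge j_1}1/v_j<\infty$, so the sequence $\{k_j\}$ --- strictly increasing by \Cref{lem: ratio increase} --- is bounded above and converges to some $k_\beta\in\mbR$.

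The bookkeeping with \Cref{lem: monotonicity} (which coordinate is maximal, and the reduction to the alternating regime when one direction is finite) is routine. The step carrying the real content is the monotonicity $M_\ell/M_{\ell-1}\ge M_{\ell-1}/M_{\ell-2}$ in part (2): it upgrades ``the generalized Markov numbers along $\mfw$ increase'' to ``they increase at least geometrically'', which is exactly what makes the correction terms in the recursion summable --- this is the crux of the convergence argument. (One also sees $k_\beta=\alpha^{-1}\lim_\ell M_\ell/M_{\ell-1}$, which is generically strictly below $k_\lambda$; e.g. $k_\beta=\varphi^2$ for a purely alternating $\mfw$ with $\lambda=0$, consistent with only two coordinates stabilizing there.)
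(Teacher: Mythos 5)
Your proposal is correct. For part (1) it is essentially the paper's argument: the paper likewise divides the generalized Markov equation by the product of the coordinates to write $k_j=k_\lambda-\bigl(\tfrac{x_1}{x_2x_3}+\tfrac{\lambda_2}{x_2}+\tfrac{\lambda_3}{x_3}\bigr)$ (in the $\mu_1$ case), bounds the correction by reciprocals of the non-maximal coordinates, and uses \Cref{lem: monotonicity} to see that all coordinates diverge when every direction is mutated infinitely often; your identity $k_\lambda-k_j=\tfrac{b_j}{a_jc_j}+\tfrac{c_j}{a_jb_j}+\tfrac{\lambda_1}{a_j}+\tfrac{\lambda_2}{b_j}+\tfrac{\lambda_3}{c_j}$ is the same statement read after the mutation instead of before it. For part (2) you take a genuinely different and heavier route: the paper observes that the very same identity gives the uniform bound $k_j\le k_\lambda$ (the correction term is positive), and then concludes by \Cref{lem: ratio increase} and the monotone convergence theorem, with no need to analyze the alternating tail at all. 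You instead establish, in the eventually two-periodic regime, the ratio monotonicity $M_\ell/M_{\ell-1}\ge M_{\ell-1}/M_{\ell-2}$, deduce at-least-geometric growth of the maxima, and sum the increments of the recursion $k_{j+1}=k_j+\lambda_{w_{j+1}}/v_j+1/(k_jv_j^2)$. This is more work than necessary for bare convergence, but it buys strictly more: summability of the increments (hence a rate), and the identification $k_\beta=\alpha^{-1}\lim_\ell M_\ell/M_{\ell-1}$, which the paper's argument does not provide and which addresses the question it leaves open about determining $k_\beta$ (consistent with \Cref{ex: Fibonacci and Markov}, where $\alpha=1$ and the limit is $\varphi^2$). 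The bookkeeping facts you use (coordinates are non-decreasing, the mutated coordinate becomes the new strict maximum, the pre-mutation value $v_j$ equals $M_{j-1}$ in the alternating tail) all follow from \Cref{lem: monotonicity} as you say, so there is no gap.
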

\begin{proof}
	Firstly, by \Cref{lem: ratio increase}, for any infinite reduced sequence $\mfw$, the corresponding ratio number sequence $\{k_j\}_{j=1}^{+\infty}$ is a strictly increasing sequence. Assume that $(x_1,x_2,x_3)$ is an arbitrary solution to the generalized Markov equation \eqref{eq: GME}. According to the mutation rules \eqref{mutation rule}, there are three possible ratio numbers. Without loss of generality, we may consider the ratio $k_j$ under the mutation $\mu_1$ as follows:
	\begin{align}
		k_j = \dfrac{x_2^2+\lambda_1x_2x_3+x_3^2}{x_1x_2x_3} = (3+\lambda_1+\lambda_2+\lambda_3)-\dfrac{x_1^2+\lambda_2x_1x_3+\lambda_3x_1x_2}{x_1x_2x_3}.\label{eq: k_j}
	\end{align} It is direct that $k_j\leq 3+\lambda_1+\lambda_2+\lambda_3$ for any $j$, which means that the sequence $\{k_j\}_{j=1}^{+\infty}$ is bounded above. Hence, by the monotone convergence theorem, it converges to some real number. Therefore, the statement $(2)$ holds. 
	
	Now, we consider the statement $(1)$. Since $1,2,3$ all appear infinitely many times in $\mfw$, by \Cref{lem: monotonicity}, $x_1,x_2,x_3$ will tend to $+\infty$ when the times of mutation are large enough. Note that for the mutation $\mu_1$, we have $x_1\neq \max(x_1,x_2,x_3)$ and 
	\begin{align}
		\dfrac{x_1^2}{x_1x_2x_3}=\dfrac{x_1}{x_2x_3}\leq \max(\dfrac{1}{x_2},\dfrac{1}{x_3}). 
	\end{align} Hence, we obtain that $k_j$ in \eqref{eq: k_j} tends to $k_{\lambda}=3+\lambda_1+\lambda_2+\lambda_3$. Similarly, we may conclude that the ratios under the mutations $\mu_2$ and $\mu_3$ also behave so. Hence, we have $\displaystyle \lim_{j\to +\infty} k_j = k_{\lambda}$.
\end{proof}
\begin{example}\label{ex: Fibonacci and Markov} We consider the Markov equation $(\lambda_1=\lambda_2=\lambda_3=0)$. Take the reduced mutation sequence $\mfw=[1,2,1,2,1,2,1]$ and we get the corresponding Markov triples as follows:
\begin{align*}
\begin{array}{cc}
	(1,1,1)\stackrel{\mu_1}{\longrightarrow} (2,1,1)\stackrel{\mu_2}{\longrightarrow} (2,5,1) \stackrel{\mu_1}{\longrightarrow} (13,5,1) \stackrel{\mu_2}{\longrightarrow}(13,34,1) \stackrel{\mu_1}{\longrightarrow} (89,34,1) \stackrel{\mu_2}{\longrightarrow} (89,233,1).
	\end{array}
\end{align*} Then, by a direct calculation, we have $k_1=2,k_2=2.5,k_3=2.6,k_4\approx2.615,k_5\approx 2.618, k_6\approx 2.618.$ In fact, let $\mfw=[w_1,w_2,w_3,w_4,\dots]=[1,2,1,2,,\dots]$, where $1$ and $2$ always alternately appear. Then, by the Catalan's identity with $r=2$ in \Cref{lem: Fibonacci property} and the cluster mutation rules, we obtain the corresponding Markov triples as
	\begin{align}
  	\mu^{\mfw_j}(1,1,1)=\left\{ \begin{array}{cc}
 (F_{2j+1},F_{2j-1},1),\ \ \text{if $j$ is odd} \\
 (F_{2j-1},F_{2j+1},1),	\ \ \text{if $j$ is even}
 \end{array}
  	 \right. .
  \end{align} Hence, by a direct calculation, we have 
  \begin{align}
  \lim_{j\rightarrow +\infty}	k_j=\lim_{j\rightarrow +\infty}\frac{F_{2j+1}}{F_{2j-1}}=\varphi^2=\frac{3+\sqrt{5}}{2}\approx 2.618.
  \end{align}
	
\end{example}
Intuitively, this example suggests the existence of the limit in case $(2)$. However, we are still unable to determine the exact value of the limit $k_{\beta}$ in general. This is because, although certain branches in the Markov tree exhibit a Fibonacci-type growth, the whole set of Markov numbers is much larger than the Fibonacci sequence. Hence, the following natural question arises.
\begin{question}
	For the case $(2)$ in \Cref{thm: converge}, how can we determine the real number $k_{\beta}$ that the sequence $\{k_j\}_{j=1}^{+\infty}$ converges to? 
\end{question}
\begin{corollary}\label{cor: limit}
	Let $\mfw=[w_1,\dots,w_n,\dots]$ be an infinite reduced mutation sequence and $\{\widebar{k_j}\}_{j=1}^{+\infty}$ be the logrithmic ratio number sequence associated with $\mfw$. Then, this number sequence converges to some real number.
\end{corollary}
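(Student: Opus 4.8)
The plan is to deduce this directly from \Cref{thm: converge} together with the continuity of the natural logarithm. First I would recall that, by definition, $\widebar{k_j} = \log(k_j)$, so the sequence $\{\widebar{k_j}\}_{j=1}^{+\infty}$ is precisely the image of the ratio number sequence $\{k_j\}_{j=1}^{+\infty}$ under the map $\log\colon \mbR_+ \to \mbR$.

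Next I would invoke \Cref{thm: converge}: whether or not all of $1,2,3$ appear infinitely many times in $\mfw$, the sequence $\{k_j\}_{j=1}^{+\infty}$ converges — either to $k_{\lambda} = 3+\lambda_1+\lambda_2+\lambda_3$ in case $(1)$, or to some real number $k_{\beta}$ in case $(2)$. Write $k_{\infty}$ for this limit in either case. Since $k_j > 1$ for every $j$ (as noted in \Cref{sub: Ratio number sequence}) and the sequence is strictly increasing by \Cref{lem: ratio increase}, we have $k_{\infty} \geq k_1 > 1 > 0$; hence $k_{\infty}$ lies in the open half-line $\mbR_+$ on which $\log$ is continuous.

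Finally, continuity of $\log$ at $k_{\infty}$ yields
\begin{align}
	\lim_{j\to+\infty}\widebar{k_j} = \lim_{j\to+\infty}\log(k_j) = \log\Big(\lim_{j\to+\infty}k_j\Big) = \log(k_{\infty}),
\end{align}
which is a finite real number (equal to $\log(3+\lambda_1+\lambda_2+\lambda_3)$ in case $(1)$). There is no genuine obstacle here; the only point requiring a little care is that $\log$ is continuous only on $\mbR_+$, which is exactly why one records the strict positivity — indeed $k_j>1$ — of the ratio numbers before passing to the limit.
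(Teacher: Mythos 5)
Your argument is correct and is exactly the (implicit) justification the paper intends: the corollary is stated as an immediate consequence of \Cref{thm: converge}, obtained by applying the continuous map $\log$ to the convergent sequence $\{k_j\}$, with the observation $k_j>1$ (from \Cref{sub: Ratio number sequence} and \Cref{lem: ratio increase}) guaranteeing the limit lies in $\mbR_+$ where $\log$ is continuous. No gap; this matches the paper's approach.
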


\subsection{Main results}
Once we obtain \Cref{thm: converge}, motivated by \Cref{thm:comparison-convergence}, we have the following main theorem, which states the asymptotic phenomenon between the logarithmic generalized Markov tree and the classical Euclid tree.
\begin{theorem}\label{thm: generalized Markov tree}
  Let $\mfw=[w_1,\dots,w_n,\dots]$ be an infinite reduced mutation sequence and $\{k_j\}_{j=1}^{+\infty}$ be the ratio number sequence associated with $\mfw$.
  \begin{enumerate}
  	\item If $1,2,3$ all appear infinitely many times in $\mfw$, then there exists a real number $q$, such that the logarithmic generalized Markov chain along $\mfw$ converges to $q$ times of the corresponding classical Euclid chain when $n$ goes infinity.
  	\item If one index $i$ of $\{1,2,3\}$ appear only finitely many times in $\mfw$, then there exists a real number $q$, such that the components of the logarithmic generalized Markov chain along $\mfw$ indexed by $\{1,2,3\}\backslash \{i\}$ converge to $q$ times of the corresponding components in the classical Euclid chain when $n$ goes infinity.
  \end{enumerate}
\end{theorem}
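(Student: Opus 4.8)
The plan is to treat the logarithmic generalized Markov chain along $\mfw$ as a \emph{variable-parameter} generalized Euclid chain and then re-run the machinery of \Cref{comparison} and \Cref{asym of GET} essentially verbatim. First I would record, via \Cref{sub: Ratio number sequence} and \Cref{ob2}, that along $\mfw$ the mutation $\widebar{\mu_{w_{j+1}}}$ replaces the mutated coordinate of $(\widebar{a_j},\widebar{b_j},\widebar{c_j})$ by $\widebar{k_{j+1}}=\log(k_{j+1})$ plus the sum of the other two; that is, it behaves exactly like $\mcM_{w_{j+1};\,\widebar{k_{j+1}}}$, the only difference from \Cref{def: GET} being that the additive constant now depends on $j$. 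By \Cref{lem: ratio increase} and \Cref{thm: converge} the sequence $\{k_j\}$ is strictly increasing and bounded above (by $k_\lambda$ in case $(1)$), hence $\{\widebar{k_j}\}$ is increasing and bounded; set $K\coloneq\sup_j\widebar{k_j}<\infty$ and, by \Cref{cor: limit}, $\widebar{k}_\infty\coloneq\lim_j\widebar{k_j}$ (which equals $\log(k_\lambda)$ under $(1)$).

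Next I would introduce, exactly as in \Cref{def: Comparison triple}, the comparison triple $(l_j,m_j,n_j)\coloneq(\widebar{a_j}/x_j,\ \widebar{b_j}/y_j,\ \widebar{c_j}/z_j)$, where $(x_j,y_j,z_j)=\mcM^{\mfw_j}(a,b,c)$ is the classical Euclid chain along $\mfw$. The computation preceding \Cref{cluster real} goes through with the single change that the shift is now $\Delta_j=\widebar{k_{j+1}}/(\text{the relevant denominator})$, and crucially $0\le\Delta_j\le K/(\text{that denominator})$ for every $j$. Hence, after replacing the fixed constant $k$ by the uniform bound $K$ throughout, \Cref{lem: internal condition}, \Cref{lem: total length} and \Cref{lem: total length bound} remain valid for these comparison triples, since each of those statements uses only an upper bound on the shift.

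From here I would follow the proofs of \Cref{prop:comparison-bounded}, \Cref{cor: min converge} and \Cref{prop:comparison-convergence} line by line, with $K$ in place of $k$: those arguments use only the bound on the perturbation (through the convergence of $\sum 1/F_n$, \Cref{prop: Fibonacci converge}) and the internal-division-point structure of $\delta_i$, so under hypothesis $(1)$ they still give that $\{\max(l_j,m_j,n_j)\}_j$ is bounded above, that $\{\min(l_j,m_j,n_j)\}_j$ converges, and that $|l_j-m_j|,|m_j-n_j|,|l_j-n_j|\to0$; therefore $l_j,m_j,n_j$ all converge to a common real number $q$, which is statement $(1)$. For statement $(2)$ I would argue directly as in \Cref{prop:comparison-convergence-two}: assuming index $1$ occurs only finitely often with the tail of $\mfw$ after its last occurrence (at position $s$) equal to $[2,3,2,3,\dots]$, one gets $\widebar{b}_{s+n},\widebar{c}_{s+n}=n\,\widebar{a}_s+(\widebar{k_{s+1}}+\cdots+\widebar{k_{s+n}})+\widebar{c}_s$ while $y_{s+n},z_{s+n}=nx_s+z_s$; since $\widebar{k_j}\to\widebar{k}_\infty$, the Cesàro averages $\tfrac1n(\widebar{k_{s+1}}+\cdots+\widebar{k_{s+n}})$ tend to $\widebar{k}_\infty$, so the two ratios indexed by $\{2,3\}$ both converge to $q=(\widebar{a}_s+\widebar{k}_\infty)/x_s$.

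The step I expect to be the main obstacle is checking that the uniform domination $\Delta_j\le K/(\cdots)$ really suffices to push the full case analysis of \Cref{prop:comparison-convergence} through: that proof splits into several sub-branches in which one interval stays short while the shift is absorbed, and each of these must be re-examined with a \emph{varying} shift rather than a constant one. I expect this to be routine — the only property of the shift used anywhere is that it is eventually smaller than any prescribed $\epsilon$ once the corresponding classical Euclid component exceeds $K/\epsilon$, which holds because, under $(1)$, all three components of the classical Euclid chain tend to $+\infty$ — but it is the place where care is needed to be sure nothing is lost by the parameter no longer being constant.
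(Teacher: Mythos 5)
Your proposal follows essentially the same route as the paper's own proof: case (1) is obtained by re-running the comparison-triple machinery of \Cref{comparison} and \Cref{asym of GET} with the constant $k$ replaced by the bounded sequence $\{\widebar{k_j}\}$ (the paper confines $\widebar{k_j}$ to a small interval $[k_1,k_2]$ for $j\geq j_0$, you use the uniform bound $K=\sup_j\widebar{k_j}$ — the same observation that those lemmas only need an upper bound on the shift), and case (2) by the explicit computation along the tail $[2,3,2,3,\dots]$ combined with the Ces\`aro mean theorem, exactly as in the paper. A minor remark: your limit $(\widebar{a_s}+\widebar{k}_\infty)/x_s$ in case (2) is the correct evaluation (consistent with the pattern of \Cref{prop:comparison-convergence-two}), whereas the paper's displayed final expression $q_0+\frac{\widebar{a_s}}{x_s}$ contains a small arithmetic slip.
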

\begin{proof}
 Firstly, we consider the case $(1)$. By \Cref{thm: converge},  the ratio number sequence  $\{k_j\}_{j=1}^{+\infty}$ converges to a real number $\beta$.
 Recall that by taking logarithm, the corresponding mutations can be written as 
  \begin{align}
    \left\{ 
    \begin{array}{ccc}
    \widebar{\mu_1}(\widebar{a_j},\widebar{b_j},\widebar{c_j}) &=& (\widebar{k_j} + \widebar{b_j} + \widebar{c_j}, \widebar{b_j}, \widebar{c_j}) \\
    \widebar{\mu_2}(\widebar{a_j},\widebar{b_j},\widebar{c_j}) &=& (\widebar{a_j}, \widebar{k_j} + \widebar{a_j} + \widebar{c_j}, \widebar{c_j}) \\
    \widebar{\mu_3}(\widebar{a_j},\widebar{b_j},\widebar{c_j}) &=& (\widebar{a_j}, \widebar{b_j}, \widebar{k_j} + \widebar{a_j} + \widebar{b_j} )
    \end{array}
    \right.
  \end{align}
  If we take $j_0$ to be large enough, then $\widebar{k_j} \in [k_1,k_2]$ for all $j \geq j_0$, where $k_1$ and $k_2$ can be chosen to be close enough. Therefore, we can show that the logarithmic generalized Markov tree has the same properties as the $\widebar{\beta}$-generalized Euclid tree, which are stated in \Cref{thm:comparison-convergence}. In fact, more precisely, we can modify the proof of \Cref{prop:comparison-bounded} via replacing the fixed number $k$ by the number sequence $\{\widebar{k_j}\}_{j=1}^{+\infty}$. Then, the similar arguments can be done in the proof of \Cref{prop:comparison-convergence} via replacing the inequality $x_j, y_j, z_j > k\slash \epsilon $ by the inequality $x_j, y_j, z_j > \widebar{k_i} \slash \epsilon$ for any $i\geq j_0$.

  Now, we consider the case $(2)$. Without loss of generality, we might assume that $i=1$ and after $w_s=1$, the subsequence is $\mfw^{(s)}=[w_{s+1},w_{s+2},w_{s+3}\dots]=[2,3,2,\dots]$. Denote the finite subsequence  of $\mfw^{(s)}$ by $\mfw^{(s)}_n=[w_{s+1},w_{s+2},\dots,w_{s+n}]$. Hence, we have 
  \begin{align}
  	\mcM^{\mfw^{(s)}_n}(x_s,y_s,z_s)=\left\{ \begin{array}{cc}
 (x_s,nx_s+z_s,(n-1)x_s+z_s),\ \ \text{if $n$ is odd} \\
 (x_s,(n-1)x_s+z_s,nx_s+z_s),	\ \ \text{if $n$ is even}
 \end{array}
  	 \right.
  \end{align}  and \begin{align}
  	\widebar{\mu}^{\mfw^{(s)}_n}(\widebar{a_s},\widebar{b_s},\widebar{c_s})=\left\{ \begin{array}{cc}
 (\widebar{a_s},n\widebar{a_s}+\widebar{c_s}+\sum_{j=l}^{l+n-1}\widebar{k_j},(n-1)\widebar{a_s}+\widebar{c_s}+\sum_{j=l}^{l+n-2}\widebar{k_j}),\ \ \text{if $n$ is odd} \\
 (\widebar{a_s},(n-1)\widebar{a_s}+\widebar{c_s}+\sum_{j=l}^{l+n-2}\widebar{k_j},n\widebar{a_s}+\widebar{c_s}+\sum_{j=l}^{l+n-1}\widebar{k_j}),	\ \ \text{if $n$ is even}
 \end{array}
  	 \right. .
  \end{align} Note that by \Cref{cor: limit}, there exists $q_0\in \mbR$, such that $\lim\limits_{k\rightarrow +\infty}\widebar{k_j}=q_0$. Hence, by the Ces\`aro mean theorem, we have 
  \begin{align}
  	\lim_{n\rightarrow +\infty}\frac{n\widebar{a_s}+\widebar{c_s}+\sum_{j=l}^{l+n-1}\widebar{k_j}}{nx_s+z_s}=\frac{\widebar{a_s}}{x_s}+\lim_{j\rightarrow +\infty}\frac{\sum_{j=l}^{l+n-1}\widebar{k_j}}{n}=q_0+\frac{\widebar{a_s}}{x_s}.
  \end{align} Therefore, the statement $(2)$ holds by taking $q=q_0+\frac{\widebar{a_s}}{x_s}$ and the theorem is proved. 
\end{proof} 
\begin{example}\label{ex: Fibonacci case}
	For the Markov equation \eqref{eq: Markov}, let us consider the reduced mutation sequence $\mfw=[w_1,w_2,w_3,w_4,\dots]=[1,2,1,2,,\dots]$, where $1$ and $2$ always alternately appear. It corresponds to the Case $(2)$ in \Cref{thm: generalized Markov tree}. Then, following \Cref{ex: Fibonacci and Markov}, we obtain the corresponding Markov triples as
	\begin{align}
  	\mu^{\mfw_n}(1,1,1)=\left\{ \begin{array}{cc}
 (F_{2n+1},F_{2n-1},1),\ \ \text{if $n$ is odd} \\
 (F_{2n-1},F_{2n+1},1),	\ \ \text{if $n$ is even}
 \end{array}
  	 \right. ,
  \end{align} and the classical Euclid triples (with the initial triple $(1,1,1)$) as
  \begin{align}
  	\mcM^{\mfw_n}(1,1,1)=\left\{ \begin{array}{cc}
 (n+1,n,1),\ \ \text{if $n$ is odd} \\
 (n,n+1,1),	\ \ \text{if $n$ is even}
 \end{array}
  	 \right. .
  \end{align} By a direct calculation, we have 
  \begin{align}
  	q=\lim_{n\rightarrow +\infty}\frac{\log(F_{2n-1})}{n}=\lim_{n\rightarrow +\infty}\frac{\log(F_{2n+1})}{n+1}=2\log(\varphi)=\log(\frac{3+\sqrt{5}}{2}).
  \end{align} Here, note that $q$ is not a rational number.
\end{example}
In the general case, such explicit expressions and formulas may not be available. Moreover, it is quite difficult to determine the precise value of $q$. We do not even know whether $q$ is a rational number or not. Nevertheless, motivated by the above example, we are led to the following rationality conjecture.
\begin{conjecture}\label{conj: non rational}
	In \Cref{thm: generalized Markov tree}, we conjecture that all such limits $q\in \mbR_+\backslash\mbQ_+$. 
\end{conjecture}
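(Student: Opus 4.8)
The plan is to reduce the irrationality of $q$ to a transcendence statement and then invoke classical transcendence theory. Since every entry of the classical Euclid chain and of the logarithmic generalized Markov chain is positive and divergent, with the two chains growing at comparable rates, one always has $q>0$; hence it suffices to show that $e^{q}$ is an algebraic number, for then the Hermite--Lindemann theorem (if $\mu$ is algebraic with $\mu\neq 0,1$ then $\log\mu$ is transcendental) forces $q$ to be transcendental, a fortiori $q\in\mbR_{+}\backslash\mbQ_{+}$. So the whole problem becomes a question about the arithmetic nature of the limit, and the two cases of \Cref{thm: generalized Markov tree} must be treated separately.

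Case $(2)$, where one index is eventually excluded, should be provable along these lines. After $\mfw$ stops using the index $i$, the two surviving coordinates of the generalized Markov triples (with the $i$-th coordinate frozen to a positive integer $c$) evolve on the affine conic obtained by specializing \eqref{eq: GME}; because its coefficients are integers, the corresponding two-term affine recursion linearizes to a second-order linear recursion with monic integer characteristic polynomial $X^{2}-KX+1$, where $K=(3+\lambda_{1}+\lambda_{2}+\lambda_{3})c-\lambda_{i}\ge 3$, and dominant root $\rho=\tfrac{K+\sqrt{K^{2}-4}}{2}>1$. Thus $\rho$ is a quadratic algebraic number, the ratio sequence $\{k_{j}\}$ converges to an algebraic number $k_{\beta}$ expressed through $\rho$ and the (integer) initial data of the orbit, and the explicit formula $q=q_{0}+\overline{a_{s}}/x_{s}$ from the proof of \Cref{thm: generalized Markov tree}$(2)$ — with $q_{0}=\log k_{\beta}$ and $a_{s},x_{s}\in\mbZ_{>0}$ — gives $e^{q}=k_{\beta}\cdot a_{s}^{1/x_{s}}$, a product of algebraic numbers. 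After pinning down the identification of $k_{\beta}$ with the Perron root, checking that the degenerate value $K=2$ cannot occur, and carrying the algebraic bookkeeping for general initial triples, the Hermite--Lindemann step closes this case; this already recovers $q=\log(\varphi^{2})$ of \Cref{ex: Fibonacci case}.

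Case $(1)$, where all three indices recur, is the genuine difficulty and is why the statement remains conjectural. Writing the logarithmic mutations as $v_{j+1}=M_{w_{j+1}}v_{j}+(\log k_{j+1})\,e_{w_{j+1}}$, with $M_{1},M_{2},M_{3}$ the nonnegative integer matrices realizing \eqref{eq: classical}, the limit $q$ is a renormalized Lyapunov-type coordinate of the infinite product $M_{w_{j}}\cdots M_{w_{1}}$, twisted by the inhomogeneous term. Splitting $\log k_{j}=\log k_{\lambda}+\log(k_{j}/k_{\lambda})$ with $k_{\lambda}=3+\lambda_{1}+\lambda_{2}+\lambda_{3}$ (legitimate by \Cref{thm: converge}$(1)$), the substitution $v_{j}\mapsto v_{j}+(\log k_{\lambda})\mathbf{1}$ makes the leading part homogeneous — using $M_{i}\mathbf{1}=\mathbf{1}+e_{i}$ — so that, when $\mfw$ is eventually periodic, this part contributes to $q$ a ratio $(\eta^{\top}\widetilde v_{0})/(\eta^{\top}u_{0})$ for a Perron covector $\eta$; this is a \emph{finite} $\overline{\mbQ}$-linear form in logarithms of integers, hence transcendental or zero by Baker's theorem, and nonzero because $q>0$.

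The hard part will be the transient correction carried by $\log(k_{j}/k_{\lambda})$: these are logarithms of rationals, weighted by the exponentially decaying off-diagonal factors of the matrix product, so their net effect on $q$ is an \emph{infinite} linear combination of logarithms of algebraic numbers, which lies outside the reach of classical transcendence theory; and for a non-eventually-periodic $\mfw$ even the Perron-eigenvalue heuristic disappears and one faces transcendence of Lyapunov exponents of arithmetic matrix products, a notoriously open family of questions. Accordingly, the realistic target is to establish Case $(2)$ unconditionally and to isolate the infinite logarithmic sum, together with the non-periodic regime of Case $(1)$, as the precise obstruction that keeps the full statement conjectural.
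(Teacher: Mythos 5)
This statement is a \emph{conjecture}: the paper offers no proof of \Cref{conj: non rational}, only the motivating computation in \Cref{ex: Fibonacci case} (where $q=\log\frac{3+\sqrt{5}}{2}$) and the numerical evidence of \Cref{example}, so there is no argument of the authors to measure yours against. Judged as a proof of the full statement, your submission has a genuine gap which you yourself flag: case (1) of \Cref{thm: generalized Markov tree} --- the case where all three indices recur, which is the real substance of the conjecture --- is not settled. Your Baker-type argument only treats the ``leading'' finite linear form in logarithms, and only for eventually periodic $\mfw$; the correction coming from the terms $\log(k_j/k_\lambda)$ is an infinite combination of logarithms of rationals with exponentially decaying algebraic weights, for which no classical transcendence tool applies, and for non-eventually-periodic $\mfw$ even the Perron-eigenvector heuristic disappears. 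So what you have written is a research program, not a proof, and the conjecture remains open on your approach exactly where it was open before.

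That said, your case (2) reduction is sound in outline and would actually go beyond the paper, which leaves even that case conjectural. Freezing the $i$-th coordinate at the positive integer $c$ and applying Vieta jumping to the specialization of \eqref{eq: GME}, the alternating mutations do satisfy an inhomogeneous (period-two) linear recursion whose homogeneous characteristic polynomial is $X^2-(k_\lambda c-\lambda_i)X+1$ with $k_\lambda c-\lambda_i\geq 3$, so the limit $k_\beta$ of \Cref{thm: converge}(2) is the quadratic irrational $\rho/c$ with $\rho$ the dominant root, and $e^{q}$ is algebraic and $>1$; Hermite--Lindemann then makes $q$ transcendental, recovering \Cref{ex: Fibonacci case}. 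One caution: the closed formula $q=q_0+\overline{a_s}/x_s$ that you quote from the paper's proof of \Cref{thm: generalized Markov tree}(2) appears to mis-divide --- the displayed computation there actually gives $q=(\overline{a_s}+q_0)/x_s$ --- but this does not harm your argument, since $(a_s k_\beta)^{1/x_s}$ is still algebraic and exceeds $1$. If you carry out the identification of $k_\beta$ with the Perron root and the bookkeeping for arbitrary tails, case (2) becomes a theorem; the full conjecture, i.e.\ case (1), is the part that remains out of reach, as you correctly acknowledge.
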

\section{More examples: Lampe's Diophantine equation}\label{example}
In this section, we provide more examples to exhibit and verify the asymptotic phenomenon discussed as above.

Based on \cite{Lam16}, there is a good relation between the (generalized) cluster algebra and the generalized Markov equation with $\lambda_1=0,\lambda_2=\lambda_3=2$, which is called \emph{Lampe equation}. 
\begin{align}
  X_1^2 + X_2^2 + X_3^2 + 2X_1 X_2 + 2X_1 X_3 = 7 X_1 X_2 X_3.\label{eq: Lam}
\end{align} If the coefficient $7$ in \eqref{eq: Lam} is replaced by any positive integer number $t$, it was proved in \cite[Theorem 5.7]{CL25} that the equation has positive integer solutions if and only if $t=7$.
Note that the corresponding (generalized) cluster mutation maps $\mu_i: \mbQ^3_{+} \to \mbQ^3_{+}$ are given by 
\begin{align}
\begin{array}{cc}
    \mu_1(x_1, x_2, x_3) &= (\dfrac{x_2^2 + x_3^2}{x_1}, x_2, x_3)\\
    \mu_2(x_1, x_2, x_3) &= (x_1, \dfrac{(x_1 + x_3)^2}{x_2}, x_3)\\
    \mu_3(x_1, x_2, x_3) &= (x_1, x_2, \dfrac{(x_1 + x_2)^2}{x_3})
 \end{array}.
\end{align}
According to \cite[Theorem 2.6 \& Lemma 2.7]{Lam16}, all the solutions to the Lampe equation, which are called \emph{Lampe triples}, can be generated by the initial solution $(1,1,1)$ through finitely many cluster mutations. They also have a tree structure, which is called \emph{Lampe tree}. A part of the Lampe tree is depicted below, see \Cref{Lampe tree}.

\begin{figure}[htpb]
\begin{flushleft}
\tikzset{every picture/.style={line width=0.75pt}} 
\begin{tikzpicture}[x=0.75pt,y=0.75pt,yscale=-1,xscale=1]
\path (50,20); 
\draw    (330,132.13) -- (330,104.75) ;
\draw [shift={(330,102.75)}, rotate = 90] [color={rgb, 255:red, 0; green, 0; blue, 0 }  ][line width=0.75]    (10.93,-3.29) .. controls (6.95,-1.4) and (3.31,-0.3) .. (0,0) .. controls (3.31,0.3) and (6.95,1.4) .. (10.93,3.29)   ;
\draw    (309.75,150.5) -- (292.38,168.68) ;
\draw [shift={(291,170.13)}, rotate = 313.69] [color={rgb, 255:red, 0; green, 0; blue, 0 }  ][line width=0.75]    (10.93,-3.29) .. controls (6.95,-1.4) and (3.31,-0.3) .. (0,0) .. controls (3.31,0.3) and (6.95,1.4) .. (10.93,3.29)   ;
\draw    (353.75,140.5) -- (377.36,148.72) ;
\draw [shift={(379.25,149.38)}, rotate = 199.19] [color={rgb, 255:red, 0; green, 0; blue, 0 }  ][line width=0.75]    (10.93,-3.29) .. controls (6.95,-1.4) and (3.31,-0.3) .. (0,0) .. controls (3.31,0.3) and (6.95,1.4) .. (10.93,3.29)   ;
\draw    (259,181.25) -- (239.75,181.14) ;
\draw [shift={(237.75,181.13)}, rotate = 0.34] [color={rgb, 255:red, 0; green, 0; blue, 0 }  ][line width=0.75]    (10.93,-3.29) .. controls (6.95,-1.4) and (3.31,-0.3) .. (0,0) .. controls (3.31,0.3) and (6.95,1.4) .. (10.93,3.29)   ;
\draw    (280.04,190.79) -- (280.13,210.43) ;
\draw [shift={(280.14,212.43)}, rotate = 269.72] [color={rgb, 255:red, 0; green, 0; blue, 0 }  ][line width=0.75]    (10.93,-3.29) .. controls (6.95,-1.4) and (3.31,-0.3) .. (0,0) .. controls (3.31,0.3) and (6.95,1.4) .. (10.93,3.29)   ;
\draw    (305.18,84.79) -- (283.99,67.96) ;
\draw [shift={(282.43,66.71)}, rotate = 38.46] [color={rgb, 255:red, 0; green, 0; blue, 0 }  ][line width=0.75]    (10.93,-3.29) .. controls (6.95,-1.4) and (3.31,-0.3) .. (0,0) .. controls (3.31,0.3) and (6.95,1.4) .. (10.93,3.29)   ;
\draw    (346.71,83) -- (361.65,66.76) ;
\draw [shift={(363,65.29)}, rotate = 132.59] [color={rgb, 255:red, 0; green, 0; blue, 0 }  ][line width=0.75]    (10.93,-3.29) .. controls (6.95,-1.4) and (3.31,-0.3) .. (0,0) .. controls (3.31,0.3) and (6.95,1.4) .. (10.93,3.29)   ;
\draw    (410.43,161.29) -- (420.4,181.49) ;
\draw [shift={(421.29,183.29)}, rotate = 243.73] [color={rgb, 255:red, 0; green, 0; blue, 0 }  ][line width=0.75]    (10.93,-3.29) .. controls (6.95,-1.4) and (3.31,-0.3) .. (0,0) .. controls (3.31,0.3) and (6.95,1.4) .. (10.93,3.29)   ;
\draw    (425.75,145.07) -- (443.84,127.81) ;
\draw [shift={(445.29,126.43)}, rotate = 136.34] [color={rgb, 255:red, 0; green, 0; blue, 0 }  ][line width=0.75]    (10.93,-3.29) .. controls (6.95,-1.4) and (3.31,-0.3) .. (0,0) .. controls (3.31,0.3) and (6.95,1.4) .. (10.93,3.29)   ;
\draw (307,133) node [anchor=north west][inner sep=0.75pt]   [align=left] {\color{red}(1,1,1)};
\draw (333,114) node [anchor=north west][inner sep=0.75pt]   [align=left] {\tiny $\mu_1$};
\draw (304,84) node [anchor=north west][inner sep=0.75pt]   [align=left] {\color{purple}(2,1,1)};
\draw (258.75,170.5) node [anchor=north west][inner sep=0.75pt]   [align=left] {\color{purple}(1,4,1)};
\draw (378.75,141) node [anchor=north west][inner sep=0.75pt]   [align=left] {\color{purple}(1,1,4)};
\draw (303,155) node [anchor=north west][inner sep=0.75pt]   [align=left] {\tiny $\mu_2$};
\draw (358,130) node [anchor=north west][inner sep=0.75pt]   [align=left] {\tiny $\mu_3$};
\draw (181.14,172.29) node [anchor=north west][inner sep=0.75pt]   [align=left] {\color{brown}(17,4,1)};
\draw (244,165) node [anchor=north west][inner sep=0.75pt]   [align=left] {\tiny $\mu_1$};
\draw (256,215) node [anchor=north west][inner sep=0.75pt]   [align=left] {\color{brown}(1,4,25)};
\draw (284,192) node [anchor=north west][inner sep=0.75pt]   [align=left] {\tiny $\mu_3$};
\draw (251.71,46.71) node [anchor=north west][inner sep=0.75pt]   [align=left] {\color{brown}(2,1,9)};
\draw (284,78) node [anchor=north west][inner sep=0.75pt]   [align=left] {\tiny $\mu_3$};
\draw (350,45.86) node [anchor=north west][inner sep=0.75pt]   [align=left] {\color{brown}(2,9,1)};
\draw (350,77) node [anchor=north west][inner sep=0.75pt]   [align=left] {\tiny $\mu_2$};
\draw (398,165) node [anchor=north west][inner sep=0.75pt]   [align=left] {\tiny $\mu_1$};
\draw (400.86,184) node [anchor=north west][inner sep=0.75pt]   [align=left] {\color{brown}(17,1,4)};
\draw (420,125) node [anchor=north west][inner sep=0.75pt]   [align=left] {\tiny $\mu_2$};
\draw (425.43,107.43) node [anchor=north west][inner sep=0.75pt]   [align=left] {\color{brown}(1,25,4)};
\end{tikzpicture}
\end{flushleft}
\caption{Lampe tree}
\label{Lampe tree}
\end{figure}
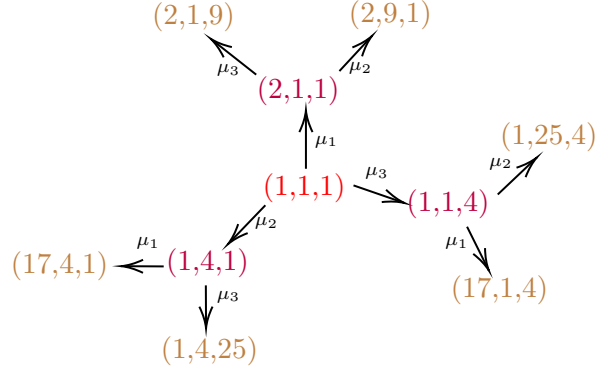
\vspace{0pt}

\begin{example}\label{ex: lampe1}
  Let us consider the Lampe triple $(29186,169,25)$ which is obtained from the composition of cluster mutations on $(1,1,1)$, that is
  $(29186,169,25)= \mu_1\circ \mu_2\circ \mu_3\circ \mu_2(1,1,1)$. Then we draw a branch of Lampe tree after the triple $(29186,169,25)$ as the following. Let $(X,Y,Z)\coloneq (29186,169,25)$, $(X_1,Y_1,Z_1) \coloneq \mu_2(X,Y,Z),\ (X_2,Y_2,Z_2) \coloneq \mu_1\circ \mu_2(X,Y,Z)$, 
  and $(X_3,Y_3,Z_3) \coloneq \mu_3(X,Y,Z)$.
  By direct calculation, we have \begin{align*}\frac{X}{YZ} \approx 6.907928994, \dfrac{Y_1}{X_1Z_1} \approx 6.9197683821,\dfrac{X_2}{Y_2Z_2} \approx 6.91976838227,\dfrac{Z_3}{X_3Y_3} \approx 6.98816061.\end{align*}
  One can observe that the phenomenon of \Cref{p} has already occurred even when the length of the mutation sequence is  $|\mfw|=5$, that is 
  \begin{align}
    \left\{ 
    \begin{array}{ccc}
    \mu_2(X,Y,Z) &\approx& (X,7XZ,Z) \\
    \mu_3(X,Y,Z) &\approx& (X,Y,7XY) \\
    \mu_1(X_1,Y_1,Z_1) &\approx& (7Y_1Z_1,Y_1,Z_1)
    \end{array}
    \right. .
  \end{align} In the following, we give an example to verify \Cref{thm: generalized Markov tree}.
\end{example}
\begin{figure}[htpb] 
  \tikzset{every picture/.style={line width=0.75pt}} 
  \begin{tikzpicture}[x=0.75pt,y=0.75pt,yscale=-1,xscale=1]
  \path (50,90); 
  \draw    (212.4,147.6) -- (247.17,132.21) ;
  \draw [shift={(249,131.4)}, rotate = 156.12] [color={rgb, 255:red, 0; green, 0; blue, 0 }  ][line width=0.75]    (10.93,-3.29) .. controls (6.95,-1.4) and (3.31,-0.3) .. (0,0) .. controls (3.31,0.3) and (6.95,1.4) .. (10.93,3.29)   ;
  \draw    (213.2,160.8) -- (248.79,177.74) ;
  \draw [shift={(250.6,178.6)}, rotate = 205.45] [color={rgb, 255:red, 0; green, 0; blue, 0 }  ][line width=0.75]    (10.93,-3.29) .. controls (6.95,-1.4) and (3.31,-0.3) .. (0,0) .. controls (3.31,0.3) and (6.95,1.4) .. (10.93,3.29)   ;
  \draw    (390.6,127) -- (426.67,117.13) ;
  \draw [shift={(428.6,116.6)}, rotate = 164.69] [color={rgb, 255:red, 0; green, 0; blue, 0 }  ][line width=0.75]    (10.93,-3.29) .. controls (6.95,-1.4) and (3.31,-0.3) .. (0,0) .. controls (3.31,0.3) and (6.95,1.4) .. (10.93,3.29)   ;

  \draw (115,146) node [anchor=north west][inner sep=0.75pt]   [align=left] {(29186,169,25)};
  \draw (257,122) node [anchor=north west][inner sep=0.75pt]   [align=left] {(29186,5049009,25)};
  \draw (255,169.6) node [anchor=north west][inner sep=0.75pt]   [align=left] {(29186,169,34468641)};
  \draw (216,120) node [anchor=north west][inner sep=0.75pt]   [align=left] {$\mu_2$};
  \draw (215,170) node [anchor=north west][inner sep=0.75pt]   [align=left] {$\mu_3$};
  \draw (435,107.2) node [anchor=north west][inner sep=0.75pt]   [align=left] {(873449321,5049009,25)};
  \draw (396,102) node [anchor=north west][inner sep=0.75pt]   [align=left] {$\mu_1$};
  \draw (450,175) node [anchor=north west][inner sep=0.75pt]   [align=left] {$\cdots$};
  \end{tikzpicture}
\caption{A local branch of the Lampe tree}
\label{fig: Lampe tree}
\end{figure}
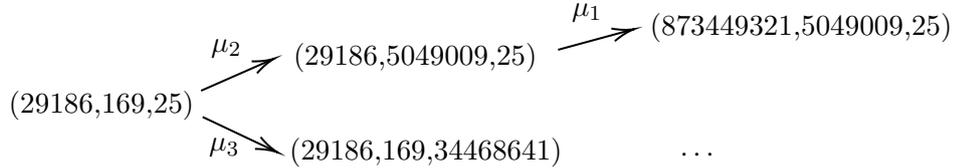 
\begin{example}\label{ex: lampe2}
  Let us consider a mutation chain $\mfw=[1,2,1,2,3,1,3,2,1,3]$ in the classical Euclid tree $\mcE$ starts at $(1,1,1)$, we obtain a chain of triples in $\mcE$ as follows:
  \begin{align*}
  \begin{array}{ll}
     (1,1,1) \stackrel{\mcM_1}{\longrightarrow} (2,1,1) \stackrel{\mcM_2}{\longrightarrow} (2,3,1) \stackrel{\mcM_1}{\longrightarrow} (4,3,1) \stackrel{\mcM_2}{\longrightarrow} (4,5,1) \stackrel{\mcM_3}{\longrightarrow} (4,5,9) \stackrel{\mcM_1}{\longrightarrow} (14,5,9) \stackrel{\mcM_3}{\longrightarrow}\\  (14,5,19)  \stackrel{\mcM_2}{\longrightarrow} (14,33,19)
     \stackrel{\mcM_1}{\longrightarrow} (52,33,19)\stackrel{\mcM_3}{\longrightarrow} (52,33,85)
     \end{array}
  \end{align*}
  The corresponding chain of Lampe triples along $\mfw$ is given as 
  \begin{align*}
  \begin{array}{ll}
     (1,1,1) \stackrel{\mu_1}{\longrightarrow} (2,1,1) \stackrel{\mu_2}{\longrightarrow} (2,9,1) \stackrel{\mu_1}{\longrightarrow} (41,9,1) \stackrel{\mu_2}{\longrightarrow} (41,196,1) \stackrel{\mu_3}{\longrightarrow} (41,196,56 169) \stackrel{\mu_1}{\longrightarrow}\\  (76 951 097,196,56 169) 
     \stackrel{\mu_3}{\longrightarrow} (76 951 097,196,105 422 946 721) \stackrel{\mu_2}{\longrightarrow}\\  (76 951 097, 56 786 879 793 920 618 169 ,105 422 946 721) 
     \stackrel{\mu_1}{\longrightarrow}\\ (41 906 481 420 650 699 762 738 336 936 066, 56 786 879 793 920 618 169, 105 422 946 721)\stackrel{\mu_3}{\longrightarrow}\\ 
      (41 906 481 420 650 699 762 738 336 936 066,56 786 879 793 920 618 169,\Omega),
     \end{array}
  \end{align*} where $\Omega=16658168261144613164154859719895467993908086960063225$.
  Now, if we take the logarithm of the Lampe triples, we have the following chain:
  \begin{align*}
  \begin{array}{ll}
     (0,0,0) \stackrel{\widebar{\mu_1}}{\longrightarrow} (0.69,0,0) \stackrel{\widebar{\mu_2}}{\longrightarrow} (0.69, 2.20, 0) \stackrel{\widebar{\mu_1}}{\longrightarrow} (3.71, 2.20, 0) \stackrel{\widebar{\mu_2}}{\longrightarrow} (3.71, 5.28, 0) \stackrel{\widebar{\mu_3}}{\longrightarrow}\\  (3.71, 5.28, 10.94)  \stackrel{\widebar{\mu_1}}{\longrightarrow} (18.16, 5.28, 10.94) \stackrel{\widebar{\mu_3}}{\longrightarrow} (18.16, 5.28, 25.38) \stackrel{\widebar{\mu_2}}{\longrightarrow} (18.16, 45.49, 25.38) \stackrel{\widebar{\mu_1}}{\longrightarrow}\\  (72.81, 45.49, 25.38)\stackrel{\widebar{\mu_3}}{\longrightarrow} (72.81, 45.49, 120.25)
    \end{array}
  \end{align*}
  Finally, we write down the comparison chain along $\mfw$ to illustrate \Cref{thm: generalized Markov tree}: 
  \begin{align*}
  \begin{array}{ll}
     (0,0,0) \stackrel{\psi_1}{\longrightarrow} (0.345,0,0) \stackrel{\psi_2}{\longrightarrow} (0.345, 0.733 , 0) \stackrel{\psi_1}{\longrightarrow} (0.928, 0.733, 0) \stackrel{\psi_2}{\longrightarrow} (0.928, 1.056, 0) \stackrel{\psi_3}{\longrightarrow} \\  (0.928, 1.056, 1.22) 
     \stackrel{\psi_1}{\longrightarrow} (1.297, 1.056, 1.22) \stackrel{\psi_3}{\longrightarrow} (1.297, 1.056, 1.336) \stackrel{\psi_2}{\longrightarrow} (1.297, 1.378, 1.336) \stackrel{\psi_1}{\longrightarrow}\\  (1.400, 1.378, 1.336) \stackrel{\psi_3}{\longrightarrow} (1.400, 1.378,1.415)
     \end{array}
  \end{align*}
  Thus, we can see that given a mutation chain, the corresponding chain of logarithmic Lampe triples converges to $q$ times of the classical Euclid triples.
   
\end{example}

\section{Generalized Markov Uniqueness Conjecture}\label{sec: GMUC}
In this section, we extend the \emph{Markov uniqueness conjecture} to the \emph{generalized Markov uniqueness conjecture}. We aim to explain how far we are  from proving them. Furthermore, we give an application of our main result (\Cref{thm: generalized Markov tree}) to this conjecture.
\subsection{Classical and generalized uniqueness conjectures}
To begin with, let us recall the famous Markov uniqueness conjecture proposed by Frobenius in 1913 as follows. For more details, we can also refer to \cite{Aig13}.
\begin{conjecture}[\emph{Uniqueness Conjecture} {\cite{Fro13}}]\label{conj: Markov uniqueness}
	If $(a,b,c)$ and $(a,b^{\prime},c^{\prime})$ are two positive integer solutions to the Markov equation \eqref{eq: Markov} with $a\geq b\geq c$ and $a\geq b^{\prime}\geq c^{\prime}$, then $b=b^{\prime}$ and $c=c^{\prime}$.
\end{conjecture}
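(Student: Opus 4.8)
The plan is not to prove \Cref{conj: Markov uniqueness} outright --- it has resisted every attempt since Frobenius --- but to describe a strategy that the asymptotic dictionary of this paper makes it natural to pursue, and to isolate the one step where the real difficulty sits. First I would reduce the statement to the tree. By \Cref{thm: generate} and \Cref{lem: monotonicity}, every non-singular Markov triple is produced from $(1,1,1)$ along a uniquely determined reduced mutation sequence $\mfw$, and the coordinate that was just created is precisely the maximal one and strictly increases along $\mfw$. Writing a triple as $a\ge b\ge c$, the conjecture is equivalent to the assertion that the map sending a non-singular triple to its largest entry $a$ is injective, i.e.\ each Markov number occurs as the maximum of exactly one triple. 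So it suffices to show that if $\mfw\neq\mfw'$ are finite reduced sequences, then the maximal entries of $\mu^{\mfw}(1,1,1)$ and $\mu^{\mfw'}(1,1,1)$ differ.

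Next I would encode the path numerically. Via the classical Euclid tree $\mcE$ with initial triple $(1,1,1)$, each reduced $\mfw$ corresponds to a node of the Stern--Brocot (Farey) tree, hence to a rational in $(0,1)$, and the maximal entry $e_{\mfw}$ of $\mcM^{\mfw}(1,1,1)$ is an explicit integer invariant of that rational (a polynomial in its partial quotients). Thus the combinatorics of $\mfw$ is faithfully recorded by $e_{\mfw}$ together with the position in $\mcE$, and the reduced form of the conjecture becomes the assertion that $\mfw$ can be \emph{recovered from the single number $\log a$}, where $a$ is the maximal Markov entry along $\mfw$.

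Here the asymptotic phenomenon enters. By \Cref{thm: generalized Markov tree} (building on \Cref{thm:comparison-convergence}), along any infinite branch one has $\log a_{\mfw_n} = q\cdot e_{\mfw_n} + O(1)$, the bounded error being exactly the comparison triple and $q$ a branch-dependent constant. Since $e_{\mfw_n}$ is an \emph{exact} combinatorial invariant growing at a Fibonacci-type rate, the hope is that the $O(1)$ correction is so rigid --- for instance that the comparison triple eventually stabilises along each branch, as in \Cref{ex: Fibonacci case} --- that two sequences of comparable depth cannot yield the same value of $\log a$. Concretely, the target is a \emph{uniform separation estimate}: a constant $\delta>0$ with $|\log a_{\mfw}-\log a_{\mfw'}|\ge\delta$ whenever $\mfw\neq\mfw'$ and $e_{\mfw}, e_{\mfw'}$ are of the same order of magnitude. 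Together with the integrality of Markov numbers this would force $a_{\mfw}\neq a_{\mfw'}$ and finish the argument; the approximate search described in \Cref{sec: GMUC} is precisely a computational shadow of this estimate.

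The hard part will be exactly that separation estimate. The convergence results proved here are qualitative and not effective: they describe one fixed infinite branch, with no control on the rate, and the limit $q$ is not even known to be irrational (\Cref{conj: non rational}). Uniqueness, by contrast, is a statement across \emph{all} branches at once --- it compares $\log a$ for triples sitting on different branches but of comparable size --- and this is exactly where a century of work has stalled. I do not expect the methods of this paper alone to deliver a branch-uniform quantitative inequality; additional input seems unavoidable, whether effective control of the comparison triples, transcendence information about the constants $q$, or the parallel picture of the Markov spectrum as the simple length spectrum of the modular torus.
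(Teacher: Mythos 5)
The statement you were asked about is not a theorem of the paper: it is Frobenius' uniqueness conjecture, which the paper merely records (and then extends to \Cref{conj: CJ}); the only thing the paper offers towards it is the informal ``asymptotic method'' and the approximate counter-example search of \Cref{subsec: Application}. So there is no proof in the paper to compare against, and you were right not to claim one. Your strategy --- reduce the conjecture to injectivity of the maximal entry on the Markov tree, encode each branch by its Euclid/Stern--Brocot data, and try to separate $\log a$ across distinct branches using \Cref{thm: generalized Markov tree} --- is precisely the kind of application the authors sketch in \Cref{sec: GMUC}, and your diagnosis that the missing ingredient is a uniform, branch-independent separation estimate is accurate: the paper's convergence results are qualitative, per-branch, non-effective, and the limits $q$ are not even known to be irrational (cf. \Cref{conj: non rational}), so they cannot by themselves distinguish triples lying on different branches.

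One technical caution about how you invoke the asymptotics: \Cref{thm: generalized Markov tree} controls the \emph{ratio} (the comparison triple is the quotient $\log a_{\mfw_n}/e_{\mfw_n}$, which converges to $q$), hence it yields only $\log a_{\mfw_n}=(q+o(1))\,e_{\mfw_n}$; it does not yield the additive statement $\log a_{\mfw_n}=q\,e_{\mfw_n}+O(1)$ with ``the bounded error being exactly the comparison triple,'' as you wrote. The $o(1)$ error is multiplied by the Fibonacci-size quantity $e_{\mfw_n}$ and may be unbounded additively, so even the heuristic form of your separation estimate requires effective, additive control of the comparison triples --- strictly more than what is proved in the paper. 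That gap is worth stating explicitly if you develop this strategy further.
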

In fact, there are several equivalent expressions of the uniqueness conjecture on the Markov equation. For example, every Markov number appears in exactly one Markov triple, up to permutation. However, note that the symmetry property in the Markov equation may not hold in the generalized Markov equation. That is to say, if $(a,b,c)$ is a Markov triple, then $(a,c,b),(b,a,c),(b,c,a),(c,a,b),(c,b,a)$ are also Markov triples. Hence, we propose a more general conjecture for the generalized Markov equations.
\begin{conjecture}[\emph{Generalized Uniqueness Conjecture}] \label{conj: CJ} If $(a,b,c)$ and $(a,b^{\prime},c^{\prime})$ are two positive integer solutions to the generalized Markov equation \eqref{eq: GME} with $a\geq b\geq c$ and $a\geq b^{\prime}\geq c^{\prime}$, then $b=b^{\prime}$ and $c=c^{\prime}$.
\end{conjecture}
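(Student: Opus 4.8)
Since \Cref{conj: CJ} specializes, at $\lambda_1=\lambda_2=\lambda_3=0$, to the classical Markov uniqueness conjecture of \Cref{conj: Markov uniqueness}, open since 1913, I do not propose a complete proof; instead I outline the strategy one would pursue, identify where it stalls, and explain how the asymptotic phenomenon of \Cref{thm: generalized Markov tree} enters. The first step is a reduction to the tree structure. By \Cref{thm: generate} every positive integer solution of \eqref{eq: GME} arises from $(1,1,1)$ by a finite reduced mutation sequence, and by \Cref{lem: monotonicity}(1) the only solutions with a repeated entry are the singular ones, which are disposed of directly. For any non-singular solution, \Cref{lem: monotonicity}(2) shows there is a unique maximal entry and a unique mutation strictly decreasing it, hence a unique parent; so the generalized Markov tree is a genuine rooted binary tree, and \Cref{conj: CJ} is equivalent to the statement that the map sending a non-singular vertex to its largest entry is injective.

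Granting this reduction, I would attempt injectivity by induction on the largest entry, imitating the partial results known for the classical equation. Suppose $(a,b,c)$ and $(a,b',c')$ are two solutions, each ordered with $a$ largest, sharing the value $a$. Fixing $a$ turns \eqref{eq: GME} into a two-variable quadratic Diophantine equation in the two smaller entries, and the aim is to show it has at most one solution with $a\geq b\geq c$; equivalently, descending one level replaces $a$ by $a_1=(b^2+\lambda_1 bc+c^2)/a$ and by $a_1'=(b'^2+\lambda_1 b'c'+c'^2)/a$ in the two parents, and one wants $\{a_1,b,c\}$ and $\{a_1',b',c'\}$ to again have equal maxima, so that the induction closes on a strictly smaller triple. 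The number-theoretic devices that settle the classical conjecture in special cases---analysis of the prime factorization of $a$, descent, and class-number obstructions, as when $a$ is a prime power or twice a prime power---transplant to \eqref{eq: GME} essentially verbatim and should yield \Cref{conj: CJ} for such $a$. The \textbf{main obstacle} is precisely the one already present in the classical setting: for a general $a$ with several distinct prime divisors no such argument is known, and nothing in the generalized framework removes this difficulty.

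What the generalized framework does contribute is quantitative control of the tree, which I would use as a substitute for, rather than a route to, a full proof. A hypothetical counterexample is a pair of distinct reduced mutation sequences $\mfw\neq\mfw'$ whose generalized Markov triples share a largest entry $a$. By \Cref{thm: generalized Markov tree} the logarithmic generalized Markov chain along $\mfw$ is asymptotically $q_{\mfw}$ times the classical Euclid chain along $\mfw$ with root $(1,1,1)$, whose growth is pinned down by the Fibonacci-type estimates of \Cref{lem:fibonacci-estimate} and \Cref{lem: Fibonacci property}. Equating the logarithms of the two putative largest entries then forces the corresponding classical Euclid maxima to be commensurable through the ratio $q_{\mfw'}/q_{\mfw}$; since these ratios vary in a narrow, computable range while the Euclid maxima grow without bound, one obtains---at the heuristic level indicated by the symbol $\approx$---a prescribed finite region of the tree in which any counterexample to \Cref{conj: CJ} must lie, hence a concrete procedure to search for one. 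This is exactly the application developed in \Cref{subsec: Application}: the asymptotic phenomenon does not prove \Cref{conj: CJ}, but it turns the search for counterexamples near any given magnitude into a finite computation, and the persistent failure of such searches is the evidence one can offer in the conjecture's favor.
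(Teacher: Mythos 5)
This statement is a conjecture which the paper itself does not prove — it generalizes the century-old Markov uniqueness conjecture, and the paper only proposes it and describes, in \Cref{subsec: Application}, an approximate counterexample-search method based on \Cref{thm: generalized Markov tree}. Your proposal correctly declines to claim a proof and treats the asymptotic phenomenon exactly as the paper does (a heuristic, finite-search tool rather than a route to a proof), so it is in line with the paper's own treatment; just be aware that your side claims — e.g.\ that the classical partial results for prime-power $a$ transplant to \eqref{eq: GME} ``essentially verbatim'' — go beyond anything asserted or verified in the paper.
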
	
\begin{remark}
Since \Cref{conj: CJ} is proposed for arbitrary $\lambda_1,\lambda_2,\lambda_3\in \mbN$, the following five statements together with \Cref{conj: CJ} are equivalent.
	\begin{enumerate}
				\item If $(a,b,c)$ and $(a,b^{\prime},c^{\prime})$ are two positive integer solutions to the generalized Markov equation with $a\geq c\geq b$ and $a\geq c^{\prime}\geq b^{\prime}$, then $b=b^{\prime}$ and $c=c^{\prime}$.
		\item If $(a,b,c)$ and $(a^{\prime},b,c^{\prime})$ are two positive integer solutions to the generalized Markov equation with $b\geq a\geq c$ and $b\geq a^{\prime}\geq c^{\prime}$, then $a=a^{\prime}$ and $c=c^{\prime}$.
		\item If $(a,b,c)$ and $(a^{\prime},b,c^{\prime})$ are two positive integer solutions to the generalized Markov equation with $b\geq c\geq a$ and $b\geq c^{\prime}\geq a^{\prime}$, then $a=a^{\prime}$ and $c=c^{\prime}$.
		\item If $(a,b,c)$ and $(a^{\prime},b^{\prime},c)$ are two positive integer solutions to the generalized Markov equation with $c\geq a\geq b$ and $c\geq a^{\prime}\geq b^{\prime}$, then $a=a^{\prime}$ and $b=b^{\prime}$.
		\item If $(a,b,c)$ and $(a^{\prime},b^{\prime},c)$ are two positive integer solutions to the generalized Markov equation with $c\geq b\geq a$ and $c\geq b^{\prime}\geq a^{\prime}$, then $a=a^{\prime}$ and $b=b^{\prime}$.
	\end{enumerate}
\end{remark}
Note that by \Cref{tree: generalized Markov triples} or other possible examples, we can trust that this conjecture holds.

According to \Cref{thm: generalized Markov tree}, when the length of the mutation sequence $\mfw$ is large enough, the logarithmic generalized Markov tree behaves like the classical Euclid tree. Hence, it is natural to consider whether we can use this result as a method to deal with the generalized Markov conjecture since the structure of classical Euclid tree is simple and clear. Here, we call it the \emph{asymptotic method}. There are some advantages by using this method. 
\begin{enumerate}
	\item The growth rate of the generalized Markov triples under the cluster mutations is quite fast. By taking the logarithm, we can significantly reduce the growth rate and simplify it.
	\item Note that the generalized Markov conjecture is equivalent to the logarithmic generalized Markov conjecture, that is taking the logarithm on each component in \Cref{conj: CJ}.
	\item The generating rules of classical Euclid tree $\mcE$ have a beautiful lattice structure and a good connection with the Fibonacci sequence.
\end{enumerate}
   However, we think that we still have a distance away from the truth. There may be several reasons as follows:
\begin{enumerate}
	\item In \Cref{thm: generalized Markov tree}, we are not clear about the concrete value $q$ of the limit.
	\item There are still some approximation errors caused by the asymptotics and limits. 
\end{enumerate} 

\subsection{Application: an approximate method for verification}\label{subsec: Application} In this subsection, by the asymptotic method, we give an approximate way to locally verify the generalized uniqueness conjecture. With the help of the classical Euclid tree, we can roughly find where the counter-examples will appear if they exist.

\textbf{Step $1$}: Let $\mfw_n=[w_1,w_2,\dots,w_n]$ be a reduced mutation sequence with length $n$. It is direct that there are $3\times 2^{n-1}$ possible choices for such $\mfw_n$. Hence, according to \Cref{thm: generalized Markov tree}, there are also $3\times 2^{n-1}$ possible values of the limit $q$. Note that when $n$ is large enough, the numbers of $q$ may approximately fix. In fact, by experiment, when $n=6$, we already approximately get large enough possible limits $q$. (For $n\geq 7$, the added ones are still approximate to those for $n=6$.) The larger  $n$ is, the more accurate the approximation becomes. However, as a result, the calculation will become more complicated.  

\textbf{Step $2$}: For a generalized Markov triple $(a,b,c)$, without loss of generality, we may assume that $a\geq b\geq c$ and its logarithmic form is $(\log(a),\log(b),\log(c))$. If the counter-example of \Cref{conj: CJ}  exists, that is $(a,b^{\prime},c^{\prime})$ with $b^{\prime}\geq c^{\prime}$ and $(b^{\prime},c^{\prime})\neq (b,c)$, then by \Cref{thm: generalized Markov tree}, we have 
\begin{align}
	\log(a) \approx q\times x_1,
\end{align} where $x_1$ is the first component of some classical Euclid triple in $\mcE$. Then, we can fix some $n$ and there are also $3\times 2^{n-1}$ possible values of $\frac{\log(a)}{q}$. 

\textbf{Step $3$}: In the classical Euclid tree $\mcE$, find the possible triples $(x,y,z)$ whose first component $x$ is approximate to $\frac{\log(a)}{q}$ and $x\geq y\geq z$. They one-to-one correspond to some unique mutation sequence $\mfw_0$. Finally, we can check that whether the generalized Markov triple $\mu^{\mfw_0}(1,1,1)$ is a counter-example. 
\begin{remark}
	The reason why we adopt the classical Euclid tree $\mcE$ is that its structure is  simpler and more transparent. Moreover, this approach significantly reduces the computational effort required for the generalized Markov equations. However, as we aim for more precise results (as $n$ increases), the complexity will correspondingly grow. For future investigations, it is also meaningful to provide an estimate of this complexity. In conclusion, we propose an approximate method to search for the possible counter-examples of \Cref{conj: CJ}.
\end{remark}
\subsection*{Acknowledgements}
The authors  sincerely thank Ryota Akagi, Peigen Cao and Zhe Sun for their valuable discussions and insightful suggestions. They are also grateful to Yasuaki Gyoda for suggesting \Cref{ex: Fibonacci case}. In addition, Z. Chen wants to thank Xiaowu Chen, Tomoki Nakanishi, Yu Ye and Qiqi Zhao for their help and support. This work is supported by  China Scholarship Council (Grant No. 202406340022) and National Natural Science Foundation of China (Grant No. 124B2003).
\newpage


\begin{thebibliography}{99}
\newcommand{\au}[1]{\textrm{#1},}
\newcommand{\ti}[1]{\textrm{#1},}
\newcommand{\jo}[1]{\textit{#1}}
\newcommand{\vo}[1]{\textbf{#1}}
\newcommand{\yr}[1]{(#1)}
\newcommand{\pp}[2]{#1--#2.}
\newcommand{\arxiv}[1]{\href{http://arxiv.org/abs/#1}{arXiv:#1}}
\bibitem{Aig13}
\au{M. Aigner}
\ti{Markov's theorem and 100 years of the uniqueness conjecture: a mathematical journal from irrational numbers to perfect matchings}
\jo{Springer}
\yr{2013}.
\bibitem{BL24}
\au{L. Bao, F. Li}
\ti{A study on Diophantine equations via cluster theory}
\jo{J. Algebra}
\vo{639} \yr{2024}, \pp{99}{119}
\bibitem{BL25}
\au{L. Bao, F. Li}
\ti{The approach of cluster symmetry to Diophantine equations}
\arxiv{2508.02005}.
\bibitem{BIRS09}
\au{A.~Buan, O.~Iyama, I.~Reiten, J.~Scott}
\ti{Cluster structures for 2-Calabi-Yau categories and unipotent groups}
\jo{Compos. Math.}
\vo{145} \yr{2009}, \pp{1035}{1079}
\bibitem{BMRRT06}
\au{A. Buan, R. Marsh, M. Reineke, I. Reiten, G. Todorov}
\ti{Tilting theory and cluster combinatorics}
\jo{Adv. Math.}
\vo{204} \yr{2006}, \pp{572}{612}
\bibitem{Cha13}
\au{A. Ch\'{a}vez}
\ti{On the $c$-vectors and $g$-vectors of the Markov cluster algebra}
\jo{S{\'e}min. Lothar. Combin.} \vo{69}
\yr{2013}, B69d.
\bibitem{CL24}
\au{Z.~Chen, Z.~Li}
\ti{Mutation invariants of cluster algebras of rank 2}
\jo{J. Algebra}
\vo{654} \yr{2024}, \pp{25}{58}
\bibitem{CL25}
\au{Z.~Chen, Z.~Li}
\ti{Sign-equivalence in cluster algebras: Classification and applications to Markov-type equations}
\jo{J. Pure Appl. Algebra}
\vo{229} \yr{2025}, 108058.  
\bibitem{CS14}
\au{L.~Chekhov, M.~Shapiro}
\ti{Teichm{\"u}ller spaces of Riemann surfaces with orbifold points of arbitrary order and cluster variables}
\jo{Int. Math. Res. Not.}
\vo{2014} \yr{2014}, \pp{2746}{2772}
\bibitem{DWZ08}
\au{H. Derksen, J. Weyman, A. Zelevinsky} 
\ti{Quivers with potentials and their representations I: Mutations}
\jo{Selecta Mathematica} \vo{14} \yr{2008}, \pp{59}{119}
\bibitem{DWZ10}
\au{H. Derksen, J. Weyman, A. Zelevinsky} 
\ti{Quivers with potentials and their representations II: applications to cluster algebras}
\jo{J. Amer. Math. Soc.} \vo{23(3)} \yr{2010}, \pp{749}{790}
\bibitem{FG09} \author{V. Fock, A. Goncharov},
\textit{Cluster ensembles, quantization and the dilogarithm}, Ann. Sci. \'Ec. Norm. Sup\'er. (4), \textbf{42} (2009), no. 6, 865--930.
\bibitem{Fro13}
\au{G. Frobenius}
\ti{\"Uber die Markoffschen Zahlen}
\jo{Preuss. Akad. Wiss.}
 \yr{1913}, no. 44, \pp{458}{487}
\bibitem{FR05}
\au{S. Fomin, N. Reading}
\ti{Generalized cluster complexes and Coxeter combinatorics}
\jo{Int. Math. Res. Not.}
 \yr{2005}, no. 44, \pp{2709}{2757}
\bibitem{FST1}
\au{A. Felikson, S. Shapiro, P. Tumarkin}
\ti{Skew-symmetric cluster algebras of finite mutation type}
\jo{J. Eur. Math. Soc.}
 \vo{14} \yr{2012}, no. 4, \pp{1135}{1180}
\bibitem{FST2}
\au{A. Felikson, S. Shapiro, P. Tumarkin}
\ti{Cluster algebras of finite mutation type via
unfoldings}
\jo{Int. Math. Res. Not.}
\yr{2012}, no. 8, \pp{1768}{1804}
\bibitem{FZ02}
\au{S. Fomin, A. Zelevinsky}
\ti{Cluster Algebra I: Foundations}
\jo{J. Amer. Math. Soc.}
\vo{15} \yr{2002}, \pp{497}{529}
\bibitem{FZ03}
\au{S. Fomin, A. Zelevinsky}
\ti{Cluster Algebra II: Finite type classification}
\jo{Invent. Math.}
\vo{154} \yr{2003}, \pp{112}{164}
\bibitem{FZ07}
\au{S. Fomin, A. Zelevinsky}
\ti{Cluster Algebra IV: Coefficients}
\jo{Comp. Math.}
\vo{143} \yr{2007}, \pp{63}{121}
\bibitem{GHKK18}
\au{M. Gross, P. Hacking, S. Keel, M. Kontsevich}
\ti{Canonical bases for cluster algebras}
\jo{J. Amer. Math. Soc.}
\vo{31}, \yr{2018}, \pp{497}{608}
\bibitem{GM23}
\au{Y. Gyoda, K. Matsushita}
\ti{Generalization of Markov Diophantine Equation via Generalized Cluster Algebra}
\jo{Elec. J. Combin.}
\vo{30(4)} \yr{2023}, \pp{1}{20}
\bibitem{GSV03}
\au{M. Gekhtman, M. Shapiro, A. Vainshtein}
\ti{Cluster algebras and Poisson geometry}
\jo{Mosc. Math. J.}
\vo{3} \yr{2003}, \pp{899}{934}
\bibitem{GSV12}
\au{M. Gekhtman, M. Shapiro, A. Vainshtein}
\ti{Poisson geometry of directed networks in an annulus}
\jo{J. Europ. Math. Soc.}
\vo{14} \yr{2012}, \pp{541}{570}
\bibitem{Hua22}
\au{M. Huang}
\ti{On the monotonicity of the generalized Markov numbers}
\arxiv{2204.11443}.
\bibitem{HL10}
\au{D. Hernandez, B. Leclerc}
\ti{Cluster algebras and quantum affine algebras}
\jo{Duke Math. J.}
\vo{154(2)} \yr{2010}, \pp{265}{341}
\bibitem{Kau24}
\au{D. Kaufman}
\ti{Mutation invariant functions on cluster ensembles}
\jo{J. Pure Appl. Algebra}
\vo{228(2)} \yr{2024}, 107495.
\bibitem{KY11}
\au{B. Keller, D. Yang}
\ti{Derived equivalences from mutations of quivers with potential}
\jo{Adv. Math.}
\vo{226(3)} \yr{2011}, \pp{2118}{2168}
\bibitem{KNS11}
\au{A. Kuniba, T. Nakanishi, J. Suzuki}
\ti{T-systems and Y-systems in integrable systems}
\jo{J. Phys. A: Math. Theor.}
\vo{44} \yr{2011}, 103001.
\bibitem{Lam16}
\au{P. Lampe}
\ti{Diophantine equations via cluster transformations}
\jo{J. Algebra}
\vo{462}
\yr{2016},
\pp{320}{337}
\bibitem{LLRS23}
\au{K. Lee, L. Li, M. Rabideau, R. Schiffler}
\ti{On the ordering of the Markov numbers}
\jo{Adv. Appl. Math.}
\vo{143} \yr{2023}, 102453.
\bibitem{LLZ14}  K. Lee, L. Li, A. Zelevinsky, Greedy elements in rank 2 cluster algebras, {\it Selecta Mathematica} {\bf 20} (2014), no. 1, 57-82.
\bibitem{Mar80}
\au{A. Markov}
\ti{Sur les formes quadratiques binaries ind\'efinies}
\jo{Math. Ann.}
\vo{17} \yr{1880}, \pp{379}{399}.
\bibitem{Mul13}
\au{G. Muller}
\ti{Locally acyclic cluster algebras}
\jo{Adv. Math.}
\vo{233} \yr{2013}, \pp{207}{247}
\bibitem{Mus25}
\au{G. Musiker}
\ti{Super Markov Numbers and Signed Double Dimer Covers}
\arxiv{2503.21872}.
\bibitem{Nak15}
\au{T. Nakanishi}
\ti{Structure of seeds in generalized cluster algebras}
\jo{Pacific J. Math.} \vo{277} \yr{2015}, \pp{201}{218} 
\bibitem{Nak21}
\au{T. Nakanishi}
\ti{Synchronicity phenomenon in cluster patterns}
\jo{J. London Math. Soc.}
\vo{103}, \yr{2021}, \pp{1120}{1152}
\bibitem{Nak23}
\au{T. Nakanishi}
\ti{Cluster algebras and scattering diagrams}
\jo{MSJ Mem.} \vo{41} \yr{2023}, 279 pp; ISBN: 978-4-86497-105-8.
\bibitem{NR16}
\au{T. Nakanishi, D. Rupel}
\ti{Companion cluster algebras to a generalized cluster algebra}
\jo{Trav. Math.} \vo{24} \yr{2016}, \pp{129}{149} 
\bibitem{NZ12}
\au{T. Nakanishi, A. Zelevinsky}
\ti{On tropical dualities in cluster algebras}
\jo{Contemp. Math.} \vo{565} \yr{2012}, \pp{217}{226} 
\bibitem{Pro20}
\au{J. Propp}
\ti{The combinatorics of frieze patterns and Markoff numbers} \jo{Integers} \vo{20} \yr{2020}.
\bibitem{PZ12}
\au{X. Peng, J. Zhang}
\ti{Cluster algebras and markoff numbers}
\jo{CaMUS}
\vo{3} \yr{2012}, \pp{19}{26}
\bibitem{She14}
\au{L. Shen}
\ti{Stasheff polytopes and the coordinate ring of the cluster $X$-variety of type $A_n$}
\jo{Selecta Mathematica (New Series)} \vo{20} \yr{2014}, no. 3,  \pp{929}{959}
\bibitem{SV17}
\au{K. Spalding, A. Veselov}
\ti{Lyapunov Spectrum of Markov and Euclid Trees}
\jo{Nonlinearity} \vo{30} \yr{2017}, \pp{4428}{4453} 

\end{thebibliography}
\end{document}